\def\graybox(#1,#2){
\x=#1 \y=#2 
\z=\x \t=\y
\advance\z by 10 
\advance\t by 10 
\psframe[fillstyle=solid,fillcolor=lightgray,linewidth=0pt](\x,\y)(\z,\t) 
\psline[linewidth=.5pt](\x,\y)(\x,\t)(\z,\t)(\z,\y)(\x,\y)}
\def\emptygraybox(#1,#2){
\x=#1 \y=#2 
\z=\x \t=\y
\advance\z by 10 
\advance\t by 10 
\psframe[fillstyle=solid,fillcolor=lightgray,linewidth=0pt,linecolor=lightgray](\x,\y)(\z,\t)}
\def\blankbox(#1,#2){
\x=#1 \y=#2 
\z=\x \t=\y
\advance\x by 1
\advance\y by 1
\advance\z by 9 
\advance\t by 9
\psline[linewidth=1pt](\x,\y)(\x,\t)(\z,\t)(\z,\y)(\x,\y)}
\newcommand{\define}{\textbf}
\newcommand{\C}{\mathds{C}}
\newcommand{\Z}{\mathds{Z}}
\newcommand{\ba}{{\bm{a}}}
\newcommand{\bb}{{\bm{b}}}
\newcommand{\bd}{{\bm{d}}}
\newcommand{\cR}{\mathcal{R}}
\DeclareMathOperator{\Sym}{Sym}
\DeclareMathOperator{\rk}{rk}
\DeclareMathOperator{\diag}{diag}
\DeclareMathOperator{\tchar}{char}
\DeclareMathOperator{\Hom}{Hom}
\DeclareMathOperator{\Proj}{Proj}
\newcommand{\tto}{\twoheadrightarrow}
\newcommand{\bull}{ {\scriptscriptstyle{\ensuremath{\bullet}}}  }
\newcommand{\dual}{*}      
\newcommand{\kk}{\mathds{k}}        
\newcommand{\Rop}{\mathsf{R}}   
\newcommand{\Top}{\mathsf{T}}   
\newcommand{\Zop}{\mathsf{Z}}   
\newcommand{\fS}{\mathfrak{S}}        
\newtheoremstyle{scthm}%
{}{}{\itshape}{}{\scshape}{.}{ }{}
\newtheoremstyle{scdef}%
{}{}{}{}{\scshape}{.}{ }{}
\theoremstyle{scthm}
\newtheorem{theorem}{Theorem}[section]
\newtheorem{lemma}[theorem]{Lemma}
\newtheorem{proposition}[theorem]{Proposition}
\newtheorem{corollary}[theorem]{Corollary}
\newtheorem{conjecture}[theorem]{Conjecture}
\newtheorem*{thm*}{Theorem}
\newtheorem*{cor*}{Corollary}
\newtheorem*{prop*}{Proposition}
\newtheorem*{claim*}{Claim}
\theoremstyle{scdef}
\newtheorem{definition}[theorem]{Definition}
\newtheorem{remark}[theorem]{Remark}
\newtheorem{example}[theorem]{Example}
\begin{document}

\newcommand{\isom}{\cong}
\renewcommand{\setminus}{\smallsetminus}
\renewcommand{\phi}{\varphi}
\newcommand{\exterior}{\textstyle\bigwedge}
\renewcommand{\tilde}{\widetilde}
\renewcommand{\hat}{\widehat}
\renewcommand{\bar}{\overline}

\title{Filtrations and recursions for Schubert modules}
\author{David Anderson}

\date{February 22, 2026}
\address{Department of Mathematics, The Ohio State University, Columbus, OH 43210}
\email{anderson.2804@math.osu.edu}
\thanks{Partially supported by NSF CAREER DMS-1945212 and by a Simons-CRM scholarship.}

\maketitle

\renewcommand{\bfseries}{\itshape}

\begin{abstract}
Revisiting Kra\'skiewicz and Pragacz's construction of {\it Schubert modules}, we provide a new proof that their characters are equal to Schubert polynomials.  The main innovation is a representation-theoretic interpretation of a recurrence relation for Schubert polynomials recently discovered by Nadeau, Spink, and Tewari.  Along the way, we review several related constructions, and show that the Nadeau-Spink-Tewari recursion determines the characters of flagged Schur modules coming from the broader classes of {\it transparent} and {\it translucent} diagrams.  We conclude with a conjecture concerning the Schubert positivity of the characters of transparent diagrams.
\end{abstract}

\setcounter{tocdepth}{1}
\tableofcontents

\section*{Introduction}

Schur functions appear in geometry as representatives for the cohomology classes of Schubert varieties in the Grassmannian, and in representation theory as the characters for irreducible $GL_n$-modules.  They form a distinguished basis of the ring of symmetric polynomials, and are among the most-studied objects in algebraic combinatorics.

Schubert polynomials were introduced in the context of geometry, as canonical representatives for the cohomology classes of Schubert varieties in the complete flag variety \cite{ls}.  They form a distinguished basis for the ring of all polynomials, and they also have wonderful combinatorial properties, in many ways analogous to those of Schur polynomials.

A natural construction of modules having Schubert polynomials as their characters was found by Kra\'skiewicz and Pragacz.  This time the construction produces cyclic $B$-modules, where $B$ is a Borel subgroup of upper (or lower) triangular matrices \cite{kp,kp2}.  These $B$-modules were investigated rather extensively around 25 years ago by Magyar \cite{magyar,magyar-bw} and Reiner-Shimozono \cite{rs}; in the decades that followed, they received intermittent attention (see, e.g., \cite{watanabe-positive,watanabe}).  In the last few years, though, there seems to be renewed interest in these constructions: they have been revisited combinatorially, and some of the resulting formulas have been extended to Grothendieck polynomials (e.g., \cite{mss,mst,fg} and others).

On the other hand, Nadeau, Spink, and Tewari \cite{nst,nst2} have recently found a wonderful collection of identities arising from operators on the polynomial ring, including a remarkable one for Schubert polynomials:
\begin{equation}\label{e.nst1}
   \fS_w = \Rop_1(\fS_{w}) + \sum_{k\in\mathrm{Des}(w)} x_k \cdot \Rop_{k+1}( \fS_{ws_k} ).
\end{equation}
The operators $\Rop_k$ act on polynomials by mapping $x_k\mapsto 0$ and $x_i\mapsto x_{i-1}$ for $i>k$.  Nadeau, Spink, and Tewari call them \define{Bergeron-Sottile operators}, in reference to their connection with direct-sum embeddings of flag varieties \cite{bergeron-sottile}.

The results presented here are motivated by a desire to understand \eqref{e.nst1} as arising naturally from the representation theory of Kra\'skiewicz-Pragacz's Schubert modules, here denoted $E_\bull^{D(w)}$.  The notation refers to a quotient flag $E_\bull: E_n \tto E_{n-1} \tto \cdots \tto E_1$, with $\dim E_i = i$, and to the Rothe diagram $D(w)$ of the permutation.  In brief, we construct a functorial filtration
\begin{equation}\label{e.filtration}
  E_\bull^{D(w)} = \Rop_{n+1}E_\bull^{D(w)} \tto \Rop_nE_\bull^{D(w)} \tto \cdots \tto \Rop_1E_\bull^{D(w)}
\end{equation}
such that $\Rop_{k+1}E_\bull^{D(w)} \tto \Rop_kE_\bull^{D(w)}$ is an isomorphism unless $k$ is a (right) descent of $w$, in which case the homomorphism has kernel isomorphic to $\ker(E_k \tto E_{k-1}) \otimes \Rop_{k+1}E_\bull^{D(ws_k)}$.  This implies a formula in K-theory of $B$-modules:
\begin{equation}\label{e.module-recursion}
  [E_\bull^{D(w)}] = [\Rop_1E_\bull^{D(w)}] + \sum_{k\in\mathrm{Des}(w)} [\ker(E_k \tto E_{k-1})] \cdot [\Rop_{k+1}E_\bull^{D(ws_k)}].
\end{equation}
Since \eqref{e.nst1} recursively determines the Schubert polynomials, \eqref{e.module-recursion} provides an independent proof that $\fS_w$ is the character of $E_\bull^{D(w)}$.  
The main ingredients---Lemmas~\ref{l.k-full} and \ref{l.k-descent}---rely only on simple facts from classical invariant theory. 

The construction of $E_\bull^{D(w)}$ extends naturally to produce a $B$-module $E_\bull^D$ for any {\it diagram} $D$, meaning a finite subset of boxes in $\Z_{>0}^2$ (usually displayed in matrix coordinates).  As a byproduct of the discussion in \S\ref{s.kernel} and \S\ref{s.recursion}, we find that a recursion analogous to that of \eqref{e.nst1} determines the characters of $E_\bull^D$ whenever $D$ is {\it transparent} or {\it translucent} (Definitions~\ref{d.transparent} and \ref{d.translucent}).  The recursive formula can be expressed as a sum over {\it reduced words} for such diagrams, for a suitable notion of reduced word (Corollary~\ref{c.redword}).

To my knowledge, the most thorough investigation of such modules and their characters was carried out by Reiner-Shimozono \cite{rs}, who obtained precise combinatorial formulas for the character of $E_\bull^D$ under the hypothesis that $D$ is {\it \%-avoiding}.  Their methods involve a different recursion, coming from Magyar's orthodontic poset.  Examples in \S\ref{s.recursion} show that the notions of translucent and \%-avoiding are distinct, with neither encompassing the other, although both include Rothe diagrams of permutations.

The recursion \eqref{e.nst1} has many pleasant properties---it is strikingly easy to prove, and it is manifestly monomial-positive.  On the other hand, being essentially equivalent to a sum over reduced words, it is not computationally efficient, and the same is true of our extension to more general characters.  (By contrast, Magyar's orthodontic recursion is fast---but not obviously positive, as it involves divided difference operators.)  For diagrams inside a $6\times 6$ rectangle, however, experimentation is feasible.  Julia code for computing characters is available at \cite{schubmods}.

In the course of describing the setup, I take the opportunity to review the definitions and constructions of (flagged) Schur modules for general diagrams, reformulating some of them according to the functorial definitions found in, e.g., \cite{abw} or \cite[\S8.1]{fulton-yt}, and supplying several worked examples. The introductory sections (\S\S\ref{s.schur}--\ref{s.flagged}) include an extended---but far from exhaustive---exposition of these constructions.  One technical point is worth noting: as presented here, the flagged Schur module $E_\bull^D$ is naturally a pushout of the (unflagged) module $E^D$ and a tensor product of flagged column modules $TE_\bull^D$ (Equation~\eqref{e.pushout-flag}).  This lets us deploy a basic fact about pushouts to describe kernels of maps in the filtration \eqref{e.filtration} (Lemma~\ref{l.ker-gen}).

The new ideas are introduced with the Bergeron-Sottile operators in \S\ref{s.Rk}, and the reader familiar with the relevant background is invited to start there.  The main arguments are provided in \S\ref{s.kernel}, with Lemmas~\ref{l.ker-gen}, \ref{l.k-full}, and \ref{l.k-descent}.  The main results appear in \S\ref{s.recursion}---see especially Theorem~\ref{t.recur}, which generalizes \eqref{e.nst1} to characters of modules coming from a broader class of diagrams---and \S\ref{s.apps} contains some further consequences and examples, along with a conjecture about the Schubert-positivity of the characters of transparent diagrams.

\bigskip
\noindent
{\it Acknowledgements.}  This project began when Philippe Nadeau and Vasu Tewari told me about \eqref{e.nst1} during the CAAC workshop in Montreal in January 2024.  Meetings with Hunter Spink and Vasu Tewari at the Fields Institute in April 2024 provided further stimulation.  I thank all three of them for sharing ideas and a lively ongoing conversation.  Thanks also to Hugh Dennin for several helpful discussions, to Vic Reiner and Linus Setiabrata for comments on an earlier draft, and to the referee for corrections and suggestions for improvement.

I am grateful for the pleasant working environment provided by LACIM during a sabbatical visit to UQ\`AM in 2023--24, and some of this work was carried out during a visit to the Institute for Advanced Study funded by the Charles M. Simonyi endowment.

This article is dedicated to Bill Fulton on his 85th birthday---it was from him that I first learned about Schur modules and their functorial construction, along with so much else about being a mathematician.

\section{Schur modules}\label{s.schur}

Let $\kk$ be a commutative ring.  Examples we have in mind usually make $\kk$ a field, often of characteristic zero, but we will not require this in general.

Let $E$ be a finitely generated $\kk$-module.  We will need some basic linear-algebraic constructions generalizing Schur functors, based on the version described by Fulton \cite[\S8.1]{fulton-yt}.  At the same time, we introduce some notation which will be used throughout the paper.

Given a set of positive integers $\ba=\{a_1<\cdots<a_r\}$, we represent $\ba$ as a collection of $r$ boxes stacked in a column, with a box present in rows $a_1,\ldots,a_r$.  Grayscale shading is used as a placeholder to mark an empty row.  For added clarity, we often label the row numbers.  (The row labeling will not be needed until we discuss flagged modules in the next section, and later it will be used to indicate the flagging conditions themselves.)  For example, the set $\ba = \{2,3,5\}$ is represented as follows:

\begin{center}
\pspicture(0,-5)(30,55)

\rput(-5,45){$1$}
\rput(-5,35){$2$}
\rput(-5,25){$3$}
\rput(-5,15){$4$}
\rput(-5,5){$5$}

\emptygraybox(10,40)
\emptygraybox(10,10)

\blankbox(10,30)
\blankbox(10,20)
\blankbox(10,0)

\endpspicture
\end{center}
A tuple of vectors in $E^{\times r}$ can be specified by placing them in the corresponding boxes.  For example, $(u,v,w)$ might be written as
\begin{center}
\pspicture(0,-5)(30,55)

\rput(-5,45){$1$}
\rput(-5,35){$2$}
\rput(-5,25){$3$}
\rput(-5,15){$4$}
\rput(-5,5){$5$}

\emptygraybox(10,40)
\emptygraybox(10,10)

\blankbox(10,30) \rput(15,35){$u$}
\blankbox(10,20) \rput(15,25){$v$}
\blankbox(10,0) \rput(15,5){$w$}

\endpspicture
\end{center}

\subsection{Column modules}

Our first construction is simply $\exterior^r E$, viewed as a quotient of $E^{\otimes r}$.  But in preparation for what is to come, we phrase it as follows:
\begin{definition}
The $\kk$-module $E^\ba$ is defined by the universal property
\[
  \Hom(E^{\ba},M) = \Big\{\text{multilinear maps } \phi\colon E^{\times r} \to M \,\Big|\, \phi(\ldots,v,\ldots,v\ldots) = 0 \text{ for any } v\in E\Big\}
\]
\end{definition}

For instance, with $\ba=\{2,3,5\}$ as above, the relations defining $E^\ba$ are
\begin{center}
\pspicture(50,-5)(70,55)

\rput(0,25){$\phi \,\Big($}

\emptygraybox(10,40)
\emptygraybox(10,10)

\blankbox(10,30) \rput(15,35){$u$}
\blankbox(10,20) \rput(15,25){$v$}
\blankbox(10,0) \rput(15,5){$w$}

\rput[l](25,25){$\Big)=0$ whenever $u=v$, $u=w$, or $v=w$.}
\endpspicture
\end{center}
The module $E^{\ba}$ depends only on $r$, not on $\ba$, but the extra notation will be justified soon.  Further abusing notation, one often writes
\begin{center}
\pspicture(50,-5)(70,55)

\emptygraybox(10,40)
\emptygraybox(10,10)

\blankbox(10,30) \rput(15,35){$u$}
\blankbox(10,20) \rput(15,25){$v$}
\blankbox(10,0) \rput(15,5){$w$}
\endpspicture
\end{center}
to indicate $u\wedge v\wedge w$, the image of $u\otimes v\otimes w$ in $E^\ba$.

When $E$ is free, say with basis $z_1,\ldots,z_n$, we simply record the index to indicate a basis vector in a box.  The module $E^{\ba}$ acquires a basis in the usual way, by writing
\[
  z_{\ba,\bb} = z_{b_1}\wedge \cdots \wedge z_{b_r}
\]
for $b_1<\cdots<b_r$.  These basis elements are depicted by labelling the boxes of $\ba$.  So, continuing our example with $\ba=\{2,3,5\}$, if $E$ is free of rank $5$, the module $E^\ba$ is free of rank $10$, with basis
\begin{center}
\pspicture(0,-5)(30,55)

\emptygraybox(10,40)
\emptygraybox(10,10)

\blankbox(10,30) \rput(15,35){$1$}
\blankbox(10,20) \rput(15,25){$2$}
\blankbox(10,0) \rput(15,5){$3$}

\endpspicture
\pspicture(0,-5)(30,55)

\emptygraybox(10,40)
\emptygraybox(10,10)

\blankbox(10,30) \rput(15,35){$1$}
\blankbox(10,20) \rput(15,25){$2$}
\blankbox(10,0) \rput(15,5){$4$}

\endpspicture
\pspicture(0,-5)(30,55)

\emptygraybox(10,40)
\emptygraybox(10,10)

\blankbox(10,30) \rput(15,35){$1$}
\blankbox(10,20) \rput(15,25){$3$}
\blankbox(10,0) \rput(15,5){$4$}

\endpspicture
\pspicture(0,-5)(30,55)

\emptygraybox(10,40)
\emptygraybox(10,10)

\blankbox(10,30) \rput(15,35){$2$}
\blankbox(10,20) \rput(15,25){$3$}
\blankbox(10,0) \rput(15,5){$4$}

\endpspicture
\pspicture(0,-5)(30,55)

\emptygraybox(10,40)
\emptygraybox(10,10)

\blankbox(10,30) \rput(15,35){$1$}
\blankbox(10,20) \rput(15,25){$2$}
\blankbox(10,0) \rput(15,5){$5$}

\endpspicture
\end{center}

\begin{center}
\pspicture(0,-5)(30,55)

\emptygraybox(10,40)
\emptygraybox(10,10)

\blankbox(10,30) \rput(15,35){$1$}
\blankbox(10,20) \rput(15,25){$3$}
\blankbox(10,0) \rput(15,5){$5$}

\endpspicture
\pspicture(0,-5)(30,55)

\emptygraybox(10,40)
\emptygraybox(10,10)

\blankbox(10,30) \rput(15,35){$2$}
\blankbox(10,20) \rput(15,25){$3$}
\blankbox(10,0) \rput(15,5){$5$}

\endpspicture
\pspicture(0,-5)(30,55)

\emptygraybox(10,40)
\emptygraybox(10,10)

\blankbox(10,30) \rput(15,35){$1$}
\blankbox(10,20) \rput(15,25){$4$}
\blankbox(10,0) \rput(15,5){$5$}

\endpspicture
\pspicture(0,-5)(30,55)

\emptygraybox(10,40)
\emptygraybox(10,10)

\blankbox(10,30) \rput(15,35){$2$}
\blankbox(10,20) \rput(15,25){$4$}
\blankbox(10,0) \rput(15,5){$5$}

\endpspicture
\pspicture(0,-5)(30,55)

\emptygraybox(10,40)
\emptygraybox(10,10)

\blankbox(10,30) \rput(15,35){$3$}
\blankbox(10,20) \rput(15,25){$4$}
\blankbox(10,0) \rput(15,5){$5$}

\endpspicture
\end{center}

\subsection{Diagrams and exchange relations}\label{ss.d-exchange}

A \define{diagram} is a tuple $D = \left[\ba^{(1)},\ba^{(2)},\ldots,\ba^{(m)}\right]$ of sets of positive integers.  We represent a diagram $D= \left[\ba^{(1)},\ba^{(2)},\ldots,\ba^{(m)}\right]$ by placing the columns of boxes $\ba^{(1)}$, $\ba^{(2)}$, etc., alongside one another.  For example, we depict the diagram $D = \Big[ \{2,3\},\;\{2,3,5\},\;\{3\}\Big]$ as follows:
\begin{center}
\pspicture(0,0)(30,60)

\rput(-5,45){$1$}
\rput(-5,35){$2$}
\rput(-5,25){$3$}
\rput(-5,15){$4$}
\rput(-5,5){$5$}

\emptygraybox(0,40)
\emptygraybox(0,10)
\emptygraybox(0,0)

\blankbox(0,30)
\blankbox(0,20)


\emptygraybox(10,40)
\emptygraybox(10,10)

\blankbox(10,30)
\blankbox(10,20)
\blankbox(10,0)

\emptygraybox(20,40)
\emptygraybox(20,30)
\emptygraybox(20,10)
\emptygraybox(20,0)

\blankbox(20,20)

\endpspicture
\end{center}
%
Often we will consider a diagram $D$ as a collection of boxes $(i,j)$, so that the boxes in column $j$ form the set $\ba^{(j)}$.  The meaning should be clear from context.

For our purposes, nothing changes by re-ordering the columns of a diagram, or by adding or omitting empty columns, so we often do this without comment.

Given a diagram $D$, we will define a $\kk$-module $E^D$ by taking a certain quotient of the tensor product
\[
  TE^D = E^{\ba^{(1)}} \otimes \cdots \otimes E^{\ba^{(m)}}.
\]
The relations defining the quotient will be called the \define{$D$-exchange relations}.  In a nutshell, these are all the linear relations among $m$-fold products of minors of a generic matrix, where the $i$-th minor is taken on rows $\ba^{(i)}$.

To describe the $D$-exchange relations more precisely, first assume $E$ is free on $n$ generators, and choose a basis $z_1,\ldots,z_n$.  We consider $Z = \kk^n \otimes E$ as a space of $n\times n$ matrices, with basis $z_{ij} := e_i \otimes z_j$, where $e_1,\ldots,e_n$ is the standard basis of $\kk^n$.  Let $\kk[Z] = \Sym^*(\kk^n \otimes E)$ be the polynomial ring in variables $z_{ij}$.  For sets $\ba = \{a_1<\cdots<a_r\}$ and $\bb=\{b_1<\cdots<b_r\}$, let $\Delta_{\ba,\bb} \in \kk[Z]$ be the minor of $Z$ on rows $\ba$ and columns $\bb$.

A \define{filling} of $D$ is a function $\tau\colon D \to \Z_{>0}$.  A filling is \define{column-strict} if it is (strictly) increasing on columns: whenever $(i,j)$ and $(i',j)$ are in $D$, with $i<i'$, we have $\tau(i,j)<\tau(i',j)$.  
Restricting a column-strict filling $\tau$ to a column $\ba^{(j)}$, we get a set $\bb^{(j)}$.  The module $TE^D$ has a basis indexed by column-strict fillings of $D$,
\[
  z_\tau = z_{\ba^{(1)},\bb^{(1)}} \otimes \cdots \otimes z_{\ba^{(m)},\bb^{(m)}}.
\]

Now we define a homomorphism $TE^D \to \kk[Z]$ by
\[
  z_\tau \mapsto \Delta_\tau := \Delta_{\ba^{(1)},\bb^{(1)}} \cdots \Delta_{\ba^{(m)},\bb^{(m)}}.
\]
For example, if $\tau$ is the filling given by
\begin{center}
\pspicture(0,0)(30,60)

\rput(-5,45){$1$}
\rput(-5,35){$2$}
\rput(-5,25){$3$}
\rput(-5,15){$4$}
\rput(-5,5){$5$}

\emptygraybox(0,40)
\emptygraybox(0,10)
\emptygraybox(0,0)

\blankbox(0,30) \rput(5,35){$1$}
\blankbox(0,20) \rput(5,25){$2$}


\emptygraybox(10,40)
\emptygraybox(10,10)

\blankbox(10,30) \rput(15,35){$2$}
\blankbox(10,20) \rput(15,25){$4$}
\blankbox(10,0)  \rput(15,5){$5$}

\emptygraybox(20,40)
\emptygraybox(20,30)
\emptygraybox(20,10)
\emptygraybox(20,0)

\blankbox(20,20)  \rput(25,25){$3$}

\endpspicture
\end{center}
then $z_\tau$ maps to
\[
\Delta_\tau = \left|\begin{array}{cc} z_{21} & z_{22} \\ z_{31} & z_{32} \end{array}\right|
\cdot
\left|\begin{array}{ccc} z_{22} & z_{24} & z_{25} \\ z_{32} & z_{34} & z_{35} \\ z_{52} & z_{54} & z_{55} \end{array}\right|
\cdot
z_{33}
\]
The homomorphism $z_\tau \mapsto \Delta_\tau$ is injective if the sets $\ba^{(j)}$ are pairwise disjoint, but in general there may be a kernel.

\begin{definition}\label{d.D-exchange}
Given a diagram $D$, the \define{$D$-exchange relations} are the elements of the kernel of the map $z_\tau \mapsto \Delta_\tau$.
\end{definition}

We also use the term ``$D$-exchange relation'' to refer to any equation implied by these relations.

If $D$ consists of the two columns $\ba^{(1)}=\ba^{(2)}=\{1,\ldots,r\}$, there are the two-column (quadratic) {\it Sylvester relations}: given a filling $\tau$---that is, a pair of sets $\bb^{(1)},\bb^{(2)}$---select a box in the right-hand column, say in row $i$, with corresponding entry $b^{(2)}_i$.  Then
\begin{equation}
  \Delta_\tau = \sum_{\tau'} \Delta_{\tau'}
\end{equation}
the sum over all ways of exchanging the selected entry on the right, $b^{(2)}_i$, with some entry on the left, $b^{(1)}_{i'}$.  This is a special case of a more general identity due to Sylvester; see \cite[\S8.1, Lemma~2]{fulton-yt}.

For example, there is a relation
\begin{equation}\label{e.2column-relation-maximal}
\begin{aligned}
\raisebox{-50pt}{
\pspicture(0,0)(20,50)

\blankbox(0,40) \rput(5,45){$2$}
\blankbox(0,30) \rput(5,35){$3$}
\blankbox(0,20) \rput(5,25){$4$}

\blankbox(10,40) \rput(15,45){$1$}
\blankbox(10,30)  \rput(15,35){$2$}
\blankbox(10,20)  \rput(15,25){${\bf5}$}

\endpspicture
}
&=
\raisebox{-50pt}{
\pspicture(0,0)(20,50)

\blankbox(0,40) \rput(5,45){${\bf5}$}
\blankbox(0,30) \rput(5,35){$3$}
\blankbox(0,20) \rput(5,25){$4$}

\blankbox(10,40) \rput(15,45){$1$}
\blankbox(10,30)  \rput(15,35){$2$}
\blankbox(10,20)  \rput(15,25){${\bf2}$}

\endpspicture
}
+
\raisebox{-50pt}{
\pspicture(0,0)(20,50)

\blankbox(0,40) \rput(5,45){$2$}
\blankbox(0,30) \rput(5,35){${\bf5}$}
\blankbox(0,20) \rput(5,25){$4$}

\blankbox(10,40) \rput(15,45){$1$}
\blankbox(10,30)  \rput(15,35){$2$}
\blankbox(10,20)  \rput(15,25){${\bf3}$}

\endpspicture
}
+
\raisebox{-50pt}{
\pspicture(0,0)(20,50)

\blankbox(0,40) \rput(5,45){$2$}
\blankbox(0,30) \rput(5,35){$3$}
\blankbox(0,20) \rput(5,25){${\bf5}$}

\blankbox(10,40) \rput(15,45){$1$}
\blankbox(10,30)  \rput(15,35){$2$}
\blankbox(10,20)  \rput(15,25){${\bf4}$}

\endpspicture
}
\\
&= 0 -
\raisebox{-50pt}{
\pspicture(0,0)(20,40)

\blankbox(0,40) \rput(5,45){$2$}
\blankbox(0,30) \rput(5,35){$4$}
\blankbox(0,20) \rput(5,25){$5$}

\blankbox(10,40) \rput(15,45){$1$}
\blankbox(10,30)  \rput(15,35){$2$}
\blankbox(10,20)  \rput(15,25){${3}$}

\endpspicture
}
+
\raisebox{-50pt}{
\pspicture(0,0)(20,40)

\blankbox(0,40) \rput(5,45){$2$}
\blankbox(0,30) \rput(5,35){$3$}
\blankbox(0,20) \rput(5,25){${5}$}

\blankbox(10,40) \rput(15,45){$1$}
\blankbox(10,30)  \rput(15,35){$2$}
\blankbox(10,20)  \rput(15,25){${4}$}

\endpspicture
}\quad.
\end{aligned}
\end{equation}

The same type of relation holds for any pair of identical sets of row-indices $\ba^{(1)}=\ba^{(2)}$, not necessarily top-justified ones.  A similar relation holds whenever there is a containment $\ba^{(1)} \supset \ba^{(2)}$.  For instance, we have

\begin{equation}
\raisebox{-40pt}{
\pspicture(0,0)(20,40)

\blankbox(0,30) \rput(5,35){$1$}
\blankbox(0,20) \rput(5,25){$2$}

\emptygraybox(10,30)

\blankbox(10,20)  \rput(15,25){${\bf3}$}

\endpspicture
}
=
\raisebox{-40pt}{
\pspicture(0,0)(20,40)

\blankbox(0,30) \rput(5,35){$1$}
\blankbox(0,20) \rput(5,25){$3$}

\emptygraybox(10,30)

\blankbox(10,20) \rput(15,25){$2$}

\endpspicture
}
-
\raisebox{-40pt}{
\pspicture(0,0)(30,40)

\blankbox(0,30) \rput(5,35){$2$}
\blankbox(0,20) \rput(5,25){$3$}

\emptygraybox(10,30)

\blankbox(10,20) \rput(15,25){$1$}

\endpspicture
}.
\end{equation}

More generally, arbitrary minors of $Z$ can be expressed as maximal minors of an augmented matrix $\hat{Z} = \left( Z \,|\, I \right)$, where $I$ is the $n\times n$ identity matrix.  To specify columns of $\hat{Z}$, we introduce a second alphabet $\{\textcolor{blue}{\hat{1}},\textcolor{blue}{\hat{2}},\ldots\}$, so $\textcolor{blue}{\hat\imath}$ indicates the $i$th column of the identity matrix (i.e., the $(n+i)$th column of $\hat{Z}$).

The classical relations for maximal minors imply corresponding relations for minors on any given set of rows.  (See, e.g., \cite[\S6]{bcrv}.)  For example, the minor $\Delta_{\{2,3,5\},\{1,2,5\}}(Z)$ is equal to $\Delta_{\{1,\ldots,5\},\{\textcolor{blue}{\hat{1}},1,2,\textcolor{blue}{\hat{4}},5\}}(\hat{Z})$, and this can be represented as
\begin{equation*}
\raisebox{-35pt}{
\pspicture(0,0)(20,50)

\emptygraybox(10,40)
\emptygraybox(10,10)

\blankbox(10,30) \rput(15,35){$1$}
\blankbox(10,20) \rput(15,25){$2$}
\blankbox(10,0)  \rput(15,5){$5$}

\endpspicture
}
=
\raisebox{-35pt}{
\pspicture(10,0)(20,50)

\emptygraybox(10,40) \rput(15,45){\small{$\textcolor{blue}{\hat{1}}$}}
\emptygraybox(10,10) \rput(15,15){\small{$\textcolor{blue}{\hat{4}}$}}

\blankbox(10,30) \rput(15,35){$1$}
\blankbox(10,20) \rput(15,25){$2$}
\blankbox(10,0)  \rput(15,5){$5$}

\endpspicture
}\;,
\end{equation*}
corresponding to
\[
\left|\begin{array}{ccc}
z_{21} & z_{22} & z_{25} \\
z_{31} & z_{32} & z_{35} \\
z_{51} & z_{52} & z_{55}
\end{array}\right|
=
\left|\begin{array}{ccccc}
1 & z_{11} & z_{12} & 0 & z_{15} \\
0 & z_{21} & z_{22} & 0 & z_{25} \\
0 & z_{31} & z_{32} & 0 & z_{35} \\
0 & z_{41} & z_{42} & 1 & z_{45} \\
0 & z_{51} & z_{52} & 0 & z_{55}
\end{array}\right|.
\]

For example, we have a relation
\begin{equation}\label{e.2column-relation}
\begin{aligned}
\raisebox{-35pt}{
\pspicture(0,0)(20,50)

\emptygraybox(0,40)
\emptygraybox(0,10)
\emptygraybox(0,0)

\blankbox(0,30) \rput(5,35){$1$}
\blankbox(0,20) \rput(5,25){$2$}


\emptygraybox(10,40)
\emptygraybox(10,10)

\blankbox(10,30) \rput(15,35){$1$}
\blankbox(10,20) \rput(15,25){$\bm{3}$}
\blankbox(10,0)  \rput(15,5){$5$}

\endpspicture
}
=
\raisebox{-35pt}{
\pspicture(0,0)(20,50)

\emptygraybox(0,40) \rput(5,45){\small$\textcolor{blue}{\hat{1}}$}
\emptygraybox(0,10) \rput(5,15){\small$\textcolor{blue}{\hat{4}}$}
\emptygraybox(0,0) \rput(5,5){\small$\textcolor{blue}{\hat{5}}$}

\blankbox(0,30) \rput(5,35){$1$}
\blankbox(0,20) \rput(5,25){$2$}


\emptygraybox(10,40) \rput(15,45){\small$\textcolor{blue}{\hat{1}}$}
\emptygraybox(10,10) \rput(15,15){\small$\textcolor{blue}{\hat{4}}$}

\blankbox(10,30) \rput(15,35){$1$}
\blankbox(10,20) \rput(15,25){$\bm{3}$}
\blankbox(10,0)  \rput(15,5){$5$}

\endpspicture
}
&=
\raisebox{-35pt}{
\pspicture(0,0)(20,50)

\emptygraybox(0,10) \rput(5,15){\small$\textcolor{blue}{\hat{4}}$}
\emptygraybox(0,0) \rput(5,5){\small$\textcolor{blue}{\hat{5}}$}

\blankbox(0,40) \rput(5,45){$\bm{3}$}
\blankbox(0,30) \rput(5,35){$1$}
\blankbox(0,20) \rput(5,25){$2$}


\emptygraybox(10,40) \rput(15,45){\small$\textcolor{blue}{\hat{1}}$}
\emptygraybox(10,20) \rput(15,25){\small$\textcolor{blue}{\hat{1}}$}
\emptygraybox(10,10) \rput(15,15){\small$\textcolor{blue}{\hat{4}}$}

\blankbox(10,30) \rput(15,35){$1$}
\blankbox(10,0)  \rput(15,5){$5$}

\endpspicture
}
+
\raisebox{-35pt}{
\pspicture(0,0)(20,50)

\emptygraybox(0,40) \rput(5,45){\small$\textcolor{blue}{\hat{1}}$}
\emptygraybox(0,10) \rput(5,15){\small$\textcolor{blue}{\hat{4}}$}
\emptygraybox(0,0) \rput(5,5){\small$\textcolor{blue}{\hat{5}}$}

\blankbox(0,30) \rput(5,35){$\bm{3}$}
\blankbox(0,20) \rput(5,25){$2$}


\emptygraybox(10,40) \rput(15,45){\small$\textcolor{blue}{\hat{1}}$}
\emptygraybox(10,10) \rput(15,15){\small$\textcolor{blue}{\hat{4}}$}

\blankbox(10,30) \rput(15,35){$1$}
\blankbox(10,20) \rput(15,25){$1$}
\blankbox(10,0)  \rput(15,5){$5$}

\endpspicture
}
+
\raisebox{-35pt}{
\pspicture(0,0)(20,50)

\emptygraybox(0,40) \rput(5,45){\small$\textcolor{blue}{\hat{1}}$}
\emptygraybox(0,10) \rput(5,15){\small$\textcolor{blue}{\hat{4}}$}
\emptygraybox(0,0) \rput(5,5){\small$\textcolor{blue}{\hat{5}}$}

\blankbox(0,30) \rput(5,35){$1$}
\blankbox(0,20) \rput(5,25){$\bm{3}$}


\emptygraybox(10,40) \rput(15,45){\small$\textcolor{blue}{\hat{1}}$}
\emptygraybox(10,10) \rput(15,15){\small$\textcolor{blue}{\hat{4}}$}

\blankbox(10,30) \rput(15,35){$1$}
\blankbox(10,20) \rput(15,25){$2$}
\blankbox(10,0)  \rput(15,5){$5$}

\endpspicture
}
+
\raisebox{-35pt}{
\pspicture(0,0)(20,50)

\emptygraybox(0,40) \rput(5,45){\small$\textcolor{blue}{\hat{1}}$}
\emptygraybox(0,0) \rput(5,5){\small$\textcolor{blue}{\hat{5}}$}

\blankbox(0,30) \rput(5,35){$1$}
\blankbox(0,20) \rput(5,25){$2$}
\blankbox(0,10) \rput(5,15){${\bm{3}}$}


\emptygraybox(10,40) \rput(15,45){\small$\textcolor{blue}{\hat{1}}$}
\emptygraybox(10,20) \rput(15,25){\small$\textcolor{blue}{\hat{4}}$}
\emptygraybox(10,10) \rput(15,15){\small$\textcolor{blue}{\hat{4}}$}

\blankbox(10,30) \rput(15,35){$1$}
\blankbox(10,0)  \rput(15,5){$5$}

\endpspicture
}
+
\raisebox{-35pt}{
\pspicture(0,0)(20,50)

\emptygraybox(0,40) \rput(5,45){\small$\textcolor{blue}{\hat{1}}$}
\emptygraybox(0,10) \rput(5,15){\small$\textcolor{blue}{\hat{4}}$}

\blankbox(0,30) \rput(5,35){$1$}
\blankbox(0,20) \rput(5,25){$2$}
\blankbox(0,0) \rput(5,5){${\bm{3}}$}

\emptygraybox(10,40) \rput(15,45){\small$\textcolor{blue}{\hat{1}}$}
\emptygraybox(10,20) \rput(15,25){\small$\textcolor{blue}{\hat{5}}$}
\emptygraybox(10,10) \rput(15,15){\small$\textcolor{blue}{\hat{4}}$}

\blankbox(10,30) \rput(15,35){$1$}
\blankbox(10,0)  \rput(15,5){$5$}

\endpspicture
}
\\
&=
%
\quad\; 0\quad + \quad\;0\quad +
\raisebox{-35pt}{
\pspicture(0,0)(20,50)

\emptygraybox(0,40) 
\emptygraybox(0,10) 
\emptygraybox(0,0) 

\blankbox(0,30) \rput(5,35){$1$}
\blankbox(0,20) \rput(5,25){$\bm{3}$}


\emptygraybox(10,40) 
\emptygraybox(10,10) 

\blankbox(10,30) \rput(15,35){$1$}
\blankbox(10,20) \rput(15,25){$2$}
\blankbox(10,0)  \rput(15,5){$5$}

\endpspicture
}
+ \quad0\quad -
\raisebox{-35pt}{
\pspicture(0,0)(20,50)

\emptygraybox(0,40) 
\emptygraybox(0,10) 

\blankbox(0,30) \rput(5,35){$1$}
\blankbox(0,20) \rput(5,25){$2$}
\blankbox(0,0) \rput(5,5){${\bm{3}}$}

\emptygraybox(10,40) 
\emptygraybox(10,10) 
\emptygraybox(10,0)  

\blankbox(10,30) \rput(15,35){$1$}
\blankbox(10,20) \rput(15,25){$5$}

\endpspicture
}\,,
\end{aligned}
\end{equation}
that is,
\begin{align*}
  &\Delta_{\{2,3\},\{1,2\}}(Z)\cdot \Delta_{\{2,3,5\},\{1,3,5\}}(Z) \\
 &\qquad =  \Delta_{\{2,3\},\{1,3\}}(Z) \cdot  \Delta_{\{2,3,5\},\{1,2,5\}}(Z) -  \Delta_{\{2,3,5\},\{1,2,3\}}(Z) \cdot  \Delta_{\{2,3\},\{1,5\}}(Z),
\end{align*}
coming from the indicated relation for the maximal minors of $\hat{Z}$.  (In this example, the columns were nested, but the same construction evidently applies to any pair of columns $\ba$, $\ba'$.)

\begin{remark}
When $D$ consists of a nested sequence of columns, so $\ba^{(1)} \supseteq \ba^{(2)} \supseteq \cdots \supseteq \ba^{(m)}$, these two-column relations generate all the $D$-exchange relations.  This is one of the fundamental theorems of classical invariant theory: the quadratic Pl\"ucker relations generate all relations among maximal minors of a generic matrix.  (We may restrict to the submatrix of $Z$ on rows $\ba^{(1)}$, and re-order rows so all $\ba^{(j)}$ are top-justified.  Missing boxes in a column $\ba^{(j)}$ may then be considered as filled by very large $\textcolor{blue}{\hat\imath}$; see \cite[\S8.1]{fulton-yt} or \cite[\S6]{bcrv} for further details.)
\end{remark}

\begin{remark}\label{r.cubic}
On the other hand, for general diagrams $D$, the exchange relations are poorly understood---not all relations come from quadratic ones.  For instance, the cubic relation
\begin{equation}
\raisebox{-20pt}{
\pspicture(0,0)(30,30)

\emptygraybox(0,0)

\blankbox(0,20) \rput(5,25){$1$}
\blankbox(0,10) \rput(5,15){$2$}


\emptygraybox(10,10)

\blankbox(10,20)  \rput(15,25){$1$}
\blankbox(10,0)  \rput(15,5){$3$}

\emptygraybox(20,20)

\blankbox(20,10) \rput(25,15){$1$}
\blankbox(20,0) \rput(25,5){$4$}

\endpspicture
}
+
\raisebox{-20pt}{
\pspicture(0,0)(30,30)

\emptygraybox(0,0)

\blankbox(0,20) \rput(5,25){$1$}
\blankbox(0,10) \rput(5,15){$3$}


\emptygraybox(10,10)

\blankbox(10,20)  \rput(15,25){$1$}
\blankbox(10,0)  \rput(15,5){$4$}

\emptygraybox(20,20)

\blankbox(20,10) \rput(25,15){$1$}
\blankbox(20,0) \rput(25,5){$2$}

\endpspicture
}
+
\raisebox{-20pt}{
\pspicture(0,0)(30,30)

\emptygraybox(0,0)

\blankbox(0,20) \rput(5,25){$1$}
\blankbox(0,10) \rput(5,15){$4$}


\emptygraybox(10,10)

\blankbox(10,20)  \rput(15,25){$1$}
\blankbox(10,0)  \rput(15,5){$2$}

\emptygraybox(20,20)

\blankbox(20,10) \rput(25,15){$1$}
\blankbox(20,0) \rput(25,5){$3$}

\endpspicture
}
-
\raisebox{-20pt}{
\pspicture(0,0)(30,30)

\emptygraybox(0,0)

\blankbox(0,20) \rput(5,25){$1$}
\blankbox(0,10) \rput(5,15){$4$}


\emptygraybox(10,10)

\blankbox(10,20)  \rput(15,25){$1$}
\blankbox(10,0)  \rput(15,5){$3$}

\emptygraybox(20,20)

\blankbox(20,10) \rput(25,15){$1$}
\blankbox(20,0) \rput(25,5){$2$}

\endpspicture
}
-
\raisebox{-20pt}{
\pspicture(0,0)(30,30)

\emptygraybox(0,0)
 
\blankbox(0,20) \rput(5,25){$1$}
\blankbox(0,10) \rput(5,15){$3$}


\emptygraybox(10,10)

\blankbox(10,20)  \rput(15,25){$1$}
\blankbox(10,0)  \rput(15,5){$2$}

\emptygraybox(20,20)

\blankbox(20,10) \rput(25,15){$1$}
\blankbox(20,0) \rput(25,5){$4$}

\endpspicture
}
-
\raisebox{-20pt}{
\pspicture(0,0)(30,30)

\emptygraybox(0,0)
 
\blankbox(0,20) \rput(5,25){$1$}
\blankbox(0,10) \rput(5,15){$2$}


\emptygraybox(10,10)

\blankbox(10,20)  \rput(15,25){$1$}
\blankbox(10,0)  \rput(15,5){$4$}

\emptygraybox(20,20)

\blankbox(20,10) \rput(25,15){$1$}
\blankbox(20,0) \rput(25,5){$3$}

\endpspicture
}
=0
\end{equation}
is minimal, i.e., not a consequence of other relations.  See \cite[\S6.5, (6.8)]{bcrv}.
\end{remark}

Now we can define the $\kk$-module $E^D$.

\begin{definition}
Given a finitely generated $\kk$-module $E$ and a diagram $D$, the {\it Schur module} $E^D$ is defined by the universal property
\[
  \Hom( E^D, M )  = \left\{ \phi\colon E^{\times D} \to M \,\left|\, \begin{array}{l} \text{(1) } \phi \text{ is multilinear;} \\ \text{(2) }\phi \text{ is alternating on columns; and} \\ \text{(3) } \phi \text{ satisfies the }D\text{-exchange relations}\end{array} \right.\right\}
\]
for any $\kk$-module $M$.
\end{definition}

We have constructed $E^D$ in the case where $E$ is free, as the submodule of $\kk[Z]$ spanned by certain products of determinants.  In general, let $F\tto E$ be a presentation, so $F$ is a finitely generated free module.  We have surjections $TF^D \tto TE^D$ and $TF^D \tto F^D$, so we may construct $E^D$ as the pushout, i.e., the universal (source) module fitting into the diagram
\begin{equation}\label{e.pushout-schur}
\begin{tikzcd}
  TF^D \ar[r,two heads] \ar[d,two heads] & F^D \ar[d] \\
  TE^D \ar[r] & E^D.
\end{tikzcd}
\end{equation}
(The homomorphisms $F^D \to E^D$ and $TE^D \to E^D$ are also surjective, by general properties of pushouts.)  The module $E^D$ thus constructed satisfies the universal property of the definition: dualizing the pushout, $\Hom(E^D,M)$ is a pullback, which here means that
\[
  \Hom(E^D,M) = \Hom(F^D,M) \cap \Hom(TE^D,M)
\]
as submodules of $\Hom(TF^D,M)$.

From the definitions, $E^D$ only depends on $D$ up to permutations of the rows and columns.  That is, if $D'$ is obtained from $D$ by permuting some rows and columns, there is a canonical isomorphism $E^D \isom E^{D'}$.

\begin{example}
There are no exchange relations across disjoint sets $\ba^{(1)}$ and $\ba^{(2)}$, because the corresponding determinants are taken in disjoint sets of variables.  It follows that if $D=D'\cup D''$, with the columns in $D'$ occupying disjoint rows from those in $D''$, then $E^D = E^{D'} \otimes E^{D''}$.
\end{example}

\begin{example}
If $D=\lambda$ is a Young diagram---that is, it consists of $\lambda_i$ boxes in row $i$, with $\lambda_1\geq \cdots \geq \lambda_n>0$---then $E^D = E^\lambda$ is the classical Schur module: the construction is identical to the one given in \cite[\S8.1]{fulton-yt}.  So the same is true whenever the rows and columns of $D$ can be permuted to form a Young diagram.
\end{example}

\begin{example}
When $\kk$ is a field of characteristic zero, so $E=\kk^n$ is a vector space, $E^D$ agrees with the module studied by Reiner and Shimozono \cite{rs}.  With some further restrictions on the diagram, it also agrees with the module introduced by Kra\'skiewicz and Pragacz \cite{kp}, which in turn is modelled on the construction studied in detail by Akin, Buchsbaum, and Weyman in the case $D=\lambda$ \cite{abw}.  
\end{example}

\subsection{Group actions}

Any homomorphism $E\to F$ determines a homomorphism $E^D \to F^D$.  In particular, $E^D$ is a $GL(E)$-module.  In the case where $E$ is free, so $E^D$ embeds in $\kk[Z]$, this action is induced from the action on {\it entries} of the matrix $Z=(z_{ij})$, by
\[
  g\cdot (z_{ij}) = (g\cdot z_{ij}).
\]
Recalling that $z_{ij} = e_i \otimes z_j$, so $g\cdot z_{ij} = e_i\otimes g\cdot z_j$, one sees that the span of products of determinants on rows specified by $D$, and all possible choices of columns, is preserved by this action.

When $\kk$ is a field, and a maximal torus $T\subset GL(E)$ is chosen, the \define{character} of a $GL(E)$-module $M$ is the trace of the action restricted to $T$.  This is an element of the representation ring of $T$, written
\[
  \tchar_T(M) \in R(T).
\]
We will generally fix a basis so that $T$ consists of diagonal matrices, and for an element $t=\diag(t_1,\ldots,t_n)$, the character $x_i\colon T \to \kk^*$ is defined by $x_i(t) = t_i$.  This identifies $R(T)$ with $\Z[x_1^{\pm},\ldots,x_n^{\pm}]$, so $\tchar_T(M)$ is a Laurent polynomial.  We have
\[
 \tchar_T(M) = \sum_{\lambda\in\Z^n} (\dim M_\lambda)\, x^\lambda,
\]
where $x^\lambda = x_1^{\lambda_1}\cdots x_n^{\lambda_n}$, and $M_\lambda$ is the subspace of $M$ where $T$ acts with weight $\lambda$, meaning $t\cdot v = t_1^{\lambda_1}\cdots t_n^{\lambda_n} v$ for all $v\in M_\lambda$.  (For all the modules we consider, $\tchar_T(M)$ will in fact be a polynomial.)

In the following examples, we assume $\kk$ is a field.

\begin{example}
If $D$ consists of a single box, then $E^D = E$ with its standard action of $GL(E)$.  Concretely, in a basis $z_1,\ldots,z_n$, suppose $t = \diag(t_1,t_2,\ldots,t_n)$, so that $t\cdot z_j = t_jz_j$.  Then for any row $i$, $t$ acts on the row vector of $Z$ by
\[
  t\cdot \big( z_{i,1} \,\big|\, z_{i,2} \,\big|\, \cdots \,\big|\, z_{i,n} \big) = \big( t_1 z_{i,1} \,\big|\, t_2 z_{i,2} \,\big|\,\ \cdots \,\big|\, t_n z_{i,n} \big).
\]
The character of such a module is $\tchar_T(E) = x_1+\cdots+x_n$.
\end{example}

\begin{example}\label{ex.13}
Suppose $D = \big[\ba\big] = \big[ \{1,3\} \big]$ and $E$ is $3$-dimensional, with basis $z_1,z_2,z_3$.  Then $E^D = E^{\ba} = \exterior^2 E$.  We get a basis $\{z_{\ba,\bb}\}$ for $E^{\ba}$, for $\bb=\{1,2\},\{1,3\},\{2,3\}$.  These map to the determinants
\[
  \Delta_{\{1,3\},\{1,2\}} = \left|\begin{array}{cc} z_{11} & z_{12} \\ z_{31} & z_{32} \end{array}\right|, \; 
  \Delta_{\{1,3\},\{1,3\}} = \left|\begin{array}{cc} z_{11} & z_{13} \\ z_{31} & z_{33} \end{array}\right|, \; 
  \Delta_{\{1,3\},\{2,3\}} = \left|\begin{array}{cc} z_{12} & z_{13} \\ z_{32} & z_{33} \end{array}\right|.
\]
For a diagonal element $t=\diag(t_1,t_2,t_3)$, we have
\begin{align*}
  t\cdot \Delta_{\{1,3\},\{1,2\}} &= t_1 t_2 \Delta_{\{1,3\},\{1,2\}}, \\
  t\cdot \Delta_{\{1,3\},\{1,3\}} &= t_1 t_3 \Delta_{\{1,3\},\{1,3\}},\\
  t\cdot \Delta_{\{1,3\},\{2,3\}} &= t_2 t_3 \Delta_{\{1,3\},\{2,3\}}.
\end{align*}
So the character of this module is $\tchar_T(E^{\ba}) = x_1 x_2 + x_1 x_3 + x_2 x_3$.

The lower-triangular element
\[
  g = \left( \begin{array}{ccc} 1 & 0 & 0 \\ a & 1 & 0 \\ 0 & a & 1 \end{array}\right)
\]
acts by $g\cdot z_j = z_j + a z_{j+1}$ for $j=1,2$, and $g\cdot z_3 = z_3$.  So
\begin{align*}
  g\cdot \Delta_{\{1,3\},\{1,2\}} &= \Delta_{\{1,3\},\{1,2\}} + a\Delta_{\{1,3\},\{1,3\}} + a^2\Delta_{\{1,3\},\{2,3\}}, \\
  g\cdot \Delta_{\{1,3\},\{1,3\}} &= \Delta_{\{1,3\},\{1,3\}} + a\Delta_{\{1,3\},\{2,3\}},\\
  g\cdot \Delta_{\{1,3\},\{2,3\}} &= \Delta_{\{1,3\},\{2,3\}}.
\end{align*}
Similar calculations show that $E^D$ is generated by $\Delta_{\{1,3\},\{1,2\}}$ as a $B$-module, where $B$ is the subgroup of lower-triangular elements.
\end{example}


\begin{remark}
The Schur modules considered here are dual to the {\it Weyl modules} often seen in the literature.  Some authors consider the vector space of $n\times n$ matrices $Y=(y_{ij})$, with $GL_n$ acting on the ring of polynomial {\em functions} $\Sym^*(Y^\dual)$ by $(g\cdot f)(Y) = f(g^{-1}Y)$, and define a Weyl module $\mathcal{M}_D$ to be the subspace spanned by certain products of determinants.  (This works over a field of characteristic zero; more canonically one should take $\mathcal{M}_D$ to be a quotient of the divided power algebra.)  To recover Schur and Schubert polynomials, they study the {\it dual characters} of such Weyl modules.  (See, e.g., \cite{magyar, mst} for the characteristic zero version, and \cite{abw} for the more general story.)  The dual of a Weyl module is the double-dual of our Schur module, so their characters agree.
\end{remark}

\section{Flagged Schur modules}\label{s.flagged}

Let $E$ be equipped with a quotient flag:
\[
  E_\bull: E = E_n \tto \cdots \tto E_1 \tto 0.
\]
A morphism of flags $E_\bull \to F_\bull$ is just a collection of $\kk$-linear maps $E_i \to F_i$ which commute with the quotient projections.

\subsection{Constructions and definitions}

The constructions of the previous section extend directly to flagged modules.  Before giving the general version, we quickly go through the flagged analogues of the intermediate steps, $E^\ba_\bull$ and $TE^D_\bull$.

First, we can define $E^\ba_\bull$ via an analogous universal property.  To describe it, for each $p$, let $K_p = \ker(E \tto E_p)$.  Given $\ba = (a_1<\cdots<a_r)$, we say a multilinear function $\phi\colon E^{\times r} \to M$ is {\it flagged} with respect to $\ba$ if
\begin{equation}\label{e.flagged}
  \phi(v_1,\ldots,v_r) = 0 \text{ whenever } r-i \text{ of } \{v_1,\ldots,v_r\} \text{ lie in } K_{a_{i+1}},
\end{equation}
for some $0\leq i<r$.

\begin{definition}\label{d.flagged-column}
The $\kk$-module $E^\ba_\bull$ is defined by the universal property
\[
  \Hom(E^{\ba}_\bull,M) = \left\{ E^{\times r} \xrightarrow{\phi} M \,\left|\, \begin{array}{l} \phi \text{ is multilinear, alternating, and} \\ \text{flagged with respect to }\ba 
  \end{array} \right. \right\}
\]
for any $\kk$-module $M$.
\end{definition}

\noindent 
That is, $E^\ba_\bull$ is the universal source for multilinear, alternating, flagged maps out of $E^{\times r}$.

This universal property constructs the module $E^\ba_\bull$ as the quotient $E^\ba/K$, where $K\subseteq E^\ba$ is the image of the canonical homomorphism
\[
 \bigoplus_{i=0}^{r-1} \exterior^iE \otimes \exterior^{r-i} K_{a_{i+1}} \to \exterior^r E.
\]

We say a flag $E_\bull$ is \define{free} if $E$ and each $E_i$ are free modules.  Suppose $E_\bull$ is free and $\rk E_i = i$.  We can choose bases $z_{i,j}$ for $E_i$ (with $1\leq j\leq i$), so that the projection $E_i \to E_{i-1}$ is given by
\[
  z_{i,j} \mapsto \begin{cases} z_{i-1,j} & \text{if }j\leq {i-1}; \\ 0 & \text{if }j=i. \end{cases}
\]
When such bases are chosen, we say $E_\bull$ is a \define{standard flag}.\footnote{We will continue to assume $\rk E_i = i$ throughout the article.  For remarks on the more general case where $\rk E_i = d_i$, see Appendix~\ref{app.parabolic}.}

Given a subset $\bb = \{b_1<b_2< \cdots < b_r \}$, let us write
\[
  \bb \leq \ba \quad \text{ if } \quad b_i \leq a_i \text{ for all }i.
\]
Then $E^\ba_\bull$ has a basis of elements
\[
  z_{\ba,\bb} := z_{a_1,b_1} \wedge \cdots \wedge z_{a_r,b_r},
\]
ranging over subsets $\bb \leq \ba$.  (Our conventions make $z_{\ba,\bb}=0$ if $\bb\not\leq\ba$.)

As before, we will indicate basis elements by labelling diagrams, now requiring that labels in row $i$ be no larger than $i$.

\begin{example}\label{ex.235}
For $\ba=\{2,3,5\}$, the module $E^\ba_\bull$ has a basis consisting of the following seven elements:
\begin{center}
\pspicture(0,-5)(30,55)

\rput(-5,45){$1$}
\rput(-5,35){$2$}
\rput(-5,25){$3$}
\rput(-5,15){$4$}
\rput(-5,5){$5$}

\emptygraybox(10,40)
\emptygraybox(10,10)

\blankbox(10,30) \rput(15,35){$1$}
\blankbox(10,20) \rput(15,25){$2$}
\blankbox(10,0) \rput(15,5){$3$}

\endpspicture
\pspicture(0,-5)(30,55)

\emptygraybox(10,40)
\emptygraybox(10,10)

\blankbox(10,30) \rput(15,35){$1$}
\blankbox(10,20) \rput(15,25){$2$}
\blankbox(10,0) \rput(15,5){$4$}

\endpspicture
\pspicture(0,-5)(30,55)

\emptygraybox(10,40)
\emptygraybox(10,10)

\blankbox(10,30) \rput(15,35){$1$}
\blankbox(10,20) \rput(15,25){$3$}
\blankbox(10,0) \rput(15,5){$4$}

\endpspicture
\pspicture(0,-5)(30,55)

\emptygraybox(10,40)
\emptygraybox(10,10)

\blankbox(10,30) \rput(15,35){$2$}
\blankbox(10,20) \rput(15,25){$3$}
\blankbox(10,0) \rput(15,5){$4$}

\endpspicture
\pspicture(0,-5)(30,55)

\emptygraybox(10,40)
\emptygraybox(10,10)

\blankbox(10,30) \rput(15,35){$1$}
\blankbox(10,20) \rput(15,25){$2$}
\blankbox(10,0) \rput(15,5){$5$}

\endpspicture
\pspicture(0,-5)(30,55)

\emptygraybox(10,40)
\emptygraybox(10,10)

\blankbox(10,30) \rput(15,35){$1$}
\blankbox(10,20) \rput(15,25){$3$}
\blankbox(10,0) \rput(15,5){$5$}

\endpspicture
\pspicture(0,-5)(30,55)

\emptygraybox(10,40)
\emptygraybox(10,10)

\blankbox(10,30) \rput(15,35){$2$}
\blankbox(10,20) \rput(15,25){$3$}
\blankbox(10,0) \rput(15,5){$5$}

\endpspicture

\end{center}
\end{example}

Given a diagram $D = \{\ba^{(1)},\ldots,\ba^{(m)}\}$, the flagged tensor product module $TE^D_\bull$ is defined similarly to $TE^D$, by
\[
  TE^D_\bull = E^{\ba^{(1)}}_\bull \otimes E^{\ba^{(2)}}_\bull \otimes \cdots \otimes E^{\ba^{(m)}}_\bull.
\]
This has a basis consisting of tensors $z_\tau$, also defined as before, but now $\tau$ ranges over {\it column-strict flagged fillings}, meaning an entry in row $i$ can be at most $i$.

\begin{definition}
Given a finitely generated flagged $\kk$-module $E_\bull$ and a diagram $D$, the {\it flagged Schur module} $E_\bull^D$ is defined by the universal property
\[
  \Hom( E_\bull^D, M )  = \left\{ E^{\times D} \xrightarrow{\phi} M \,\left|\, \begin{array}{l} \text{(1) } \phi \text{ is multilinear;} \\ \text{(2) }\phi \text{ is alternating and flagged on each column; and} \\ 
  \text{(3) } \phi \text{ satisfies the }D\text{-exchange relations}\end{array} \right.\right\}
\]
for any $\kk$-module $M$.  (Here ``flagged'' refers to the condition \eqref{e.flagged} from Definition~\ref{d.flagged-column}, and the $D$-exchange relations are defined in Definition~\ref{d.D-exchange}.)
\end{definition}

As will be spelled out below, one should regard $(\cdot)^D$ as being functorial in the flagged module $E_\bull$.  Logically one might write $(E_\bull)^D$, but we generally suppress the parentheses.

Analogously to our construction of $E^D$ as a quotient of $TE^D$ in \S\ref{s.schur}, the universal property constructs $E^D_\bull$ as a quotient of $TE_\bull^D$.  In fact, we have another pushout diagram
\begin{equation}\label{e.pushout-flag}
\begin{tikzcd}
   TE^D \ar[r,two heads] \ar[d,two heads] & E^D \ar[d, two heads] \\
   TE^D_\bull \ar[r,two heads]  & E_\bull^D.
\end{tikzcd}
\end{equation}

The $D$-exchange relations for flagged modules can also be obtained as the kernel of a map to a polynomial ring, just as we defined them for $E^D$.  Let
\[
  \Sym^*(E_\bull) = \Sym^*(E_1)\otimes \Sym^*(E_2)\otimes \cdots\otimes \Sym^*(E_n).
\]
In case $E_\bull$ is free and standard, with compatible bases $z_{ij}$ for $E_i$ as above, this is a polynomial ring in variables
\begin{align*}
&  z_{11},  \\
&  z_{21}, z_{22}, \\
&  \vdots \\
&  z_{n1}, z_{n2}, \ldots, z_{n,n}.
\end{align*}
Let $Z_\bull = (z_{ij})_{i\geq j}$ be the $n\times n$  lower-triangular matrix of the form suggested above, so $\Sym^*(E_\bull) \isom \kk[Z_\bull]$.  We have a homomorphism
\begin{align*}
  TE^D_\bull &\to \kk[Z_\bull] \\
   z_\tau &\to \Delta_\tau,
\end{align*}
completing the diagram
\[
\begin{tikzcd}
  TE^D \ar[r, two heads] \ar[d,two heads] & E^D \ar[r,hook] \ar[d,two heads] & \kk[Z] \ar[d, two heads] \\
  TE_\bull^D \ar[r, two heads]  & E_\bull^D \ar[r,hook]  & \kk[Z_\bull].
\end{tikzcd}
\]
So in the free case, the homomorphism $E^D \tto E_\bull^D$ is induced by the map $\kk[Z] \tto \kk[Z_\bull]$ which sends $z_{ij} \mapsto 0$ for $j>i$.

\subsection{Group actions}

A homomorphism of flags $E_\bull \to F_\bull$ determines a homomorphism of flagged modules $E_\bull^D \to F_\bull^D$.  In particular, $E_\bull^D$ is a representation of the Borel subgroup $B\subset GL(E)$ which preserves $E_\bull$.

The construction of $\Sym^*(E_\bull)$ is also functorial, so a homomorphism $E_\bull \to F_\bull$ determines a homomorphism $\Sym^*(E_\bull) \to \Sym^*(F_\bull)$.  When $E_\bull$ and $F_\bull$ are free, the homomorphism $E_\bull^D \to F_\bull^D$ is induced by this homomorphism of polynomial rings.  In particular, the action of $g\in B$ on $E_\bull^D$ is induced from the action on entries of $Z_\bull = (z_{ij})_{i\geq j}$, just as before.  
Our choice of basis identifies $B$ with the Borel group of lower-triangular matrices.  

Assuming $\kk$ is a field and a maximal torus $T\subset B$ is chosen, we will write
\[
 \fS_D = \tchar_T(E_\bull^D)
\]
for the character.  Our conventions make $\fS_D$ a polynomial in $\Z[x_1,\ldots,x_n]$, not merely a Laurent polynomial.

In the following examples, we assume $\kk$ is a field.

\begin{example}\label{ex.113}
Building on Example~\ref{ex.13}, suppose $D = \big[ \{1\},\; \{1,3\} \big]$ and $E_\bull$ is a standard flag.  Then $TE_\bull^D = E_\bull^{\{1\}}\otimes E_\bull^{\{1,3\}}$, and this has a basis consisting of the two elements $z_{\{1\},\{1\}}\otimes z_{\{1,3\},\{1,2\}}$ and $z_{\{1\},\{1\}}\otimes z_{\{1,3\},\{1,3\}}$, corresponding to the two column-strict flagged fillings of $D$:

\begin{center}
\pspicture(0,0)(30,40)

\emptygraybox(0,10)
\emptygraybox(0,0)

\blankbox(0,20) \rput(5,25){$1$}


\emptygraybox(10,10)

\blankbox(10,20)  \rput(15,25){$1$}
\blankbox(10,0)  \rput(15,5){$2$}

\endpspicture
\pspicture(-10,0)(30,40)

\emptygraybox(0,10)
\emptygraybox(0,0)

\blankbox(0,20) \rput(5,25){$1$}


\emptygraybox(10,10)

\blankbox(10,20)  \rput(15,25){$1$}
\blankbox(10,0)  \rput(15,5){$3$}

\endpspicture

\end{center}

\noindent
These map to the polynomials
\[
  z_{11} \Delta_{\{1,3\},\{1,2\}} = z_{11} \left|\begin{array}{cc} z_{11} & 0 \\ z_{31} & z_{32} \end{array}\right|, \quad 
  z_{11} \Delta_{\{1,3\},\{1,3\}} = z_{11} \left|\begin{array}{cc} z_{11} & 0 \\ z_{31} & z_{33} \end{array}\right|.
\]
For a diagonal element $t=\diag(t_1,t_2,t_3)$, we have
\begin{align*}
  t\cdot \left( z_{11} \Delta_{\{1,3\},\{1,2\}} \right)  &= t_1^2 t_2 \left( z_{11} \Delta_{\{1,3\},\{1,2\}}\right), \\
  t\cdot \left( z_{11} \Delta_{\{1,3\},\{1,3\}}\right) &= t_1^2 t_3 \left(z_{11} \Delta_{\{1,3\},\{1,3\}}\right).
\end{align*}
Evidently there are no relations, so the map $TE_\bull^D \tto E_\bull^D$ is an isomorphism, and the character of this module is
\[
\fS_D = x_1^2 x_2 + x_1^2 x_3.
\]  

The lower-triangular element
\[
  g = \left( \begin{array}{ccc} 1 & 0 & 0 \\ a & 1 & 0 \\ 0 & a & 1 \end{array}\right)
\]
acts by
\begin{align*}
  g\cdot \left( z_{11} \Delta_{\{1,3\},\{1,2\}} \right) &= z_{11} \left(\Delta_{\{1,3\},\{1,2\}} + a\Delta_{\{1,3\},\{1,3\}} \right), \\
  g\cdot \left( z_{11} \Delta_{\{1,3\},\{1,3\}}\right)  &= z_{11} \Delta_{\{1,3\},\{1,3\}}.
\end{align*}
This shows that $E_\bull^D$ is generated by $z_{11} \Delta_{\{1,3\},\{1,2\}}$ as a $B$-module.
\end{example}

\begin{example}\label{ex.13542}
Consider $D = \big[ \{2,3,4\},\; \{3\} \big]$.  Then $TE_\bull^D = E_\bull^{\{2,3,4\}} \otimes E_\bull^{\{3\}}$ is isomorphic to $\exterior^3 E_4 \otimes E_3$, so it is $12$-dimensional.  The module $E_\bull^D$ is a quotient of $TE_\bull^D$, and there is one relation:
\begin{center}
\pspicture(0,0)(20,40)

\emptygraybox(0,30)

\blankbox(0,20) \rput(5,25){$1$}
\blankbox(0,10) \rput(5,15){$3$}
\blankbox(0,0)  \rput(5,5){$4$}


\emptygraybox(10,30)
\emptygraybox(10,20)
\emptygraybox(10,0)

\blankbox(10,10)  \rput(15,15){$2$}

\endpspicture
\pspicture(-10,0)(20,40)

\rput(-5,20){$=$}

\emptygraybox(0,30)

\blankbox(0,20) \rput(5,25){$2$}
\blankbox(0,10) \rput(5,15){$3$}
\blankbox(0,0) \rput(5,5){$4$}

\emptygraybox(10,30)
\emptygraybox(10,20)
\emptygraybox(10,0)

\blankbox(10,10) \rput(15,15){$1$}

\endpspicture
\pspicture(-10,0)(30,40)

\rput(-5,20){$+$}

\emptygraybox(0,30)

\blankbox(0,20) \rput(5,25){$1$}
\blankbox(0,10) \rput(5,15){$2$}
\blankbox(0,0) \rput(5,5){$4$}

\emptygraybox(10,30)
\emptygraybox(10,20)
\emptygraybox(10,0)

\blankbox(10,10) \rput(15,15){$3$}

\rput(25,20){.}

\endpspicture
\end{center}
So $E_\bull^D$ is $11$-dimensional, with character
\begin{align*}
 \fS_D &= (x_1 x_2 x_3 + x_1 x_2 x_4 + x_1 x_3 x_4 + x_2 x_3 x_4)( x_1 + x_2 + x_3 ) - x_1 x_2 x_3 x_4 \\
  &= x_1^2 x_2 x_3 + x_1^2 x_2 x_4 + x_1^2 x_3 x_4 + x_1 x_2^2 x_3 + x_1 x_2^2 x_4 + x_1 x_2 x_3^2 \\
  &\quad + 2 x_1 x_2 x_3 x_4 + x_1 x_3^2 x_4 + x_2^2 x_3 x_4 + x_2 x_3^2 x_4.
\end{align*}
\end{example}

\begin{remark}
Quotient flags provide the natural setting for dealing with general modules.  However, so long as one works over a field of characteristic zero, the constructions in this section (and the rest of this paper) may be carried out using flags of subspaces instead, simply by dualizing.
\end{remark}

\section{Bergeron-Sottile operators}\label{s.Rk}

\subsection{Operators on indices and flags}

The \define{Bergeron-Sottile operator} $\Rop_k$ is defined on integer indices by
\begin{equation}
  \Rop_k(i) = \begin{cases} i &\text{if }i<k; \\ i-1 &\text{if }i\geq k. \end{cases}
\end{equation}
These satisfy the relations $\Rop_k^2 = \Rop_k \Rop_{k+1}$.  (They do not commute: for instance, $\Rop_2^2(3) = \Rop_2\Rop_3(3) = 1$, but $\Rop_3 \Rop_2(3) = 2$.)  These operators were defined and studied by Nadeau, Spink and Tewari via their action on the polynomial ring \cite{nst,nst2}.  To translate to their notation, let $\varpi_i = x_1 + x_2 + \cdots + x_i$, and define an endomorphism of the polynomial ring by $\Rop_k(\varpi_i) = \varpi_{\Rop_k(i)}$.  Written in terms of the $x$ variables, $\Rop_k$ acts on polynomials by
\[
  (\Rop_k f)(x_1,x_2,\ldots) = f(x_1,\ldots,x_{k-1},0,x_{k},\ldots).
\]
That is, $\Rop_k$ acts on the polynomial ring by
\[
 \Rop_k(x_i)=\begin{cases} x_i &\text{if }i<k, \\ 0 &\text{if }i=k, \\ x_{i-1} &\text{if } i>k. \end{cases}
\]

Given a flagged module $E_\bull$, for any integer $k$, let $\Rop_kE_\bull$ be the flagged module defined by $(\Rop_k E_\bull)_i = E_{\Rop_k(i)}$.  We often abbreviate the notation by writing $\Rop_k E_i$ for the $i$th part of this flag, so
\[
  \Rop_kE_i = \begin{cases} E_i & \text{if } i< k; \\ E_{i-1} & \text{if } i\geq k. \end{cases}
\]
The maps of the flag $\Rop_kE_\bull$ are the ones induced from $E_\bull$, but with $\Rop_k E_{k} \to \Rop_k E_{k-1}$ being the identity map on $E_{k-1}$.

We have surjections of flags $E_\bull = \Rop_{n+1}E_\bull \tto \Rop_nE_\bull \tto \cdots \tto \Rop_1E_\bull$, which expand as follows:

\begin{equation*}
\begin{tikzcd}
   &   & k+2  & k+1 & k & k-1 \\
\Rop_{k+1}E_\bull:  \ar[d,two heads, "\pi_k"]  & \cdots  \ar[r,two heads]  & E_{k+1} \ar[d,equals] \ar[r,two heads] &  E_k \ar[d, equals] \ar[r,equals] & E_k \ar[d,two heads] \ar[r,two heads] & E_{k-1}\ar[d, equals]  \ar[r,two heads] & \cdots \\
\Rop_kE_\bull:    & \cdots  \ar[r,two heads] & E_{k+1} \ar[r,two heads]&  E_{k} \ar[r,two heads] & E_{k-1} \ar[r,equals] & E_{k-1} \ar[r,two heads] & \cdots.
\end{tikzcd}
\end{equation*}

The algebra of operators $\Rop_k$ acts oppositely on flags: we have $\Rop_k^2E_\bull = \Rop_{k+1}\Rop_kE_\bull$.  For instance, $\Rop_2^2E_\bull = \Rop_2(\Rop_2E_\bull)$ is 
\begin{equation*}
\begin{tikzcd}
   &   & 4  & 3 & 2 & 1 \\
\Rop_{2}^2E_\bull:  & \cdots  \ar[r,two heads]  & E_{2}\ar[r,two heads] &  E_1  \ar[r,equals] & E_1 \ar[r,equals] & E_{1} \ar[r,two heads] & \cdots
\end{tikzcd}
\end{equation*}
which equals $\Rop_3\Rop_2E_\bull$.  (Writing out the definition, $(\Rop_k \Rop_\ell E_\bull)_i = (\Rop_\ell E_\bull)_{\Rop_k(i)} = E_{\Rop_\ell \Rop_k(i)}$.)

Assume $E_\bull$ is free and standard, so $\Sym^*(E_\bull) = \kk[Z_\bull] = \kk[z_{ij}]_{i\geq j}$.  Then the homomorphism $\Sym^*(E_\bull) \tto \Sym^*(\Rop_kE_\bull)$ is given by setting the diagonal entries $z_{ii}=0$ for $i\geq k$.  In particular, the homomorphism induced by $\Rop_{k+1}E_\bull \tto \Rop_kE_\bull$ simply sets the entry $z_{kk}$ to $0$.  It will be convenient to write $Z_k$ for the lower triangular matrix of variables in $\Sym^*(\Rop_kE_\bull)$.  For example, with $k=3$, the map $\Sym^*(\Rop_{k+1}E_\bull) \tto \Sym^*(\Rop_kE_\bull)$ looks as follows:
\begin{align*}
  \kk[Z_{4}] & \to \kk[Z_3] \\
{\left(\begin{array}{cccccc} z_{11}  \\ z_{21} & z_{22}  \\ z_{31} & z_{32} & \bm{z_{33}}  \\ z_{41} & z_{42} & z_{43} & 0  \\ z_{51} & z_{52} & z_{53} & z_{54} & 0  \\ \vdots &  &  &  &  & \ddots \end{array}\right)} & \to
{\left(\begin{array}{cccccc}
z_{11} &  &  &  &  &    \\
z_{21} & z_{22} &  &  &  &    \\
z_{31} & z_{32} & {\bm 0} &  &  &    \\
z_{41} & z_{42} & z_{43} & 0 &  &    \\
z_{51} & z_{52} & z_{53} & z_{54} & 0 &    \\
\vdots &  &  &  &  & \ddots 
\end{array}\right)}.
\end{align*}


\subsection{Action on single-column modules}

Assume $E_\bull$ is free and standard.  For a diagram with a single column $\ba = \{a_1<\cdots<a_r\}$, we have a basis for $E_\bull^{\ba}$ consisting of $z_{\ba,\bb}$ with $\bb\leq \ba$.  Similarly, $\Rop_kE_\bull^\ba$ has a basis consisting of those $z_{\ba,\bb}$ such that $\bb\leq \ba$, and furthermore $b_i<a_i$ whenever $a_i\geq k$.  This condition comes up frequently, so we introduce a shorthand for it:

\begin{definition}
Write $\bb\leq_k \ba$ to mean $b_i\leq a_i$ for all $i$, and $b_i<a_i$ whenever $a_i\geq k$.
\end{definition}

Under the projection $E_\bull^\ba \tto \Rop_kE_\bull^\ba$, the standard basis maps by
\[
  z_{\ba,\bb} \mapsto \begin{cases} z_{\ba,\bb} & \text{if } \bb \leq_k \ba; \\ 0 &\text{otherwise.} \end{cases}
\]
The same formula defines the map $\Rop_{k+1}E_\bull^\ba \tto \Rop_kE_\bull^\ba$.  It follows that the kernel of this map has a basis given by the nonzero $z_{\ba,\bb}$ which map to $0$ under the projection.  Stated formally:
\begin{lemma}\label{l.Na}
Assume $E_\bull$ is free and standard, and let $N_k^\ba = \ker( \Rop_{k+1}E_\bull^{\ba} \tto \Rop_kE_\bull^{\ba} )$.  Then $N_k^\ba$ is spanned by  $z_{\ba,\bb}$ such that $\bb\leq_{k+1}\ba$ and $a_i=b_i=k$ for some $i$. \qed
\end{lemma}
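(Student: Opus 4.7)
The statement is essentially a bookkeeping consequence of the basis descriptions set up just before the lemma, so my plan is to make that bookkeeping explicit. The key observation is that both $\Rop_{k+1}E_\bull^\ba$ and $\Rop_kE_\bull^\ba$ have been given monomial bases of the form $z_{\ba,\bb}$, indexed respectively by $\bb \leq_{k+1} \ba$ and by $\bb \leq_k \ba$, and that the projection sends each standard generator $z_{\ba,\bb}$ either to the same-named basis element on the target or to $0$. So the proof reduces to describing the indexing sets explicitly.

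First I would recall the formula
\[
  z_{\ba,\bb} \longmapsto \begin{cases} z_{\ba,\bb} & \text{if } \bb\leq_k \ba, \\ 0 & \text{otherwise,}\end{cases}
\]
for the map $\Rop_{k+1}E_\bull^\ba \tto \Rop_kE_\bull^\ba$, which follows from how the standard-flag bases were defined. Because this is a basis-to-(basis-or-zero) map on a module with a basis, the kernel is spanned by exactly those standard generators $z_{\ba,\bb}$ (with $\bb\leq_{k+1}\ba$) that are sent to zero, that is, those for which $\bb\not\leq_k\ba$.

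Next I would compare the two conditions and simplify. Both say $b_i\leq a_i$ for all $i$, so the difference between them lies only in the strict-inequality requirement: $\bb\leq_k\ba$ demands $b_i<a_i$ whenever $a_i\geq k$, while $\bb\leq_{k+1}\ba$ only demands this whenever $a_i\geq k+1$. Hence $\bb\leq_{k+1}\ba$ but $\bb\not\leq_k\ba$ is equivalent to the existence of some index $i$ with $a_i=k$ and $b_i=k$. This is precisely the characterization stated in the lemma.

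I do not expect any real obstacle here; the only thing to be careful about is keeping the two $\leq_k$ and $\leq_{k+1}$ conditions straight and verifying that indices with $a_i\geq k+1$ do not contribute to the discrepancy. After writing the comparison as above, the result follows immediately.
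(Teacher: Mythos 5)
Your proposal is correct and is essentially the paper's own argument: the paper proves this lemma by the same observation that the projection sends each standard basis element $z_{\ba,\bb}$ to itself or to zero (according to whether $\bb\leq_k\ba$), so the kernel is spanned by the basis elements killed by the map, and the discrepancy between $\leq_{k+1}$ and $\leq_k$ is exactly the existence of an index with $a_i=b_i=k$. Nothing is missing.
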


This is easily illustrated by considering $\Rop_{k+1}E_\bull^\ba$ as a submodule of $\kk[Z_{k+1}]$ via the embedding $z_{\ba,\bb} \mapsto \Delta_{\ba,\bb}$.  The point is that $\Delta_{\ba,\bb}$ maps to zero whenever $z_{kk}$ appears as a diagonal entry of the minor.

\begin{example}\label{ex.235r}
Consider $\ba=\{2,3,5\}$, as in Example~\ref{ex.235}.  A basis for $\Rop_4E_\bull^\ba$ is indexed by the column-strict flagged fillings $\bb$ whose entries in row $5$ are at most $4$:
\begin{center}
\pspicture(0,-5)(30,55)

\rput(-5,45){$1$}
\rput(-5,35){$2$}
\rput(-5,25){$3$}
\rput(-5,15){$4$}
\rput(-5,5){$5$}

\emptygraybox(10,40)
\emptygraybox(10,10)

\blankbox(10,30) \rput(15,35){$1$}
\blankbox(10,20) \rput(15,25){$2$}
\blankbox(10,0) \rput(15,5){$3$}

\endpspicture
\pspicture(0,-5)(30,55)

\emptygraybox(10,40)
\emptygraybox(10,10)

\blankbox(10,30) \rput(15,35){$1$}
\blankbox(10,20) \rput(15,25){$2$}
\blankbox(10,0) \rput(15,5){$4$}

\endpspicture
\pspicture(0,-5)(30,55)

\emptygraybox(10,40)
\emptygraybox(10,10)

\blankbox(10,30) \rput(15,35){$1$}
\blankbox(10,20) \rput(15,25){$3$}
\blankbox(10,0) \rput(15,5){$4$}

\endpspicture
\pspicture(0,-5)(30,55)

\emptygraybox(10,40)
\emptygraybox(10,10)

\blankbox(10,30) \rput(15,35){$2$}
\blankbox(10,20) \rput(15,25){$3$}
\blankbox(10,0) \rput(15,5){$4$}

\endpspicture
\end{center}
Only the first two survive the projection to $\Rop_3E_\bull^\ba$.  Identifying them with minors of $Z_4$, these four polynomials map as
\begin{align*}
\left|\begin{array}{ccc} z_{21} & z_{22} & 0 \\ z_{31} & z_{32} & \bm{z_{33}} \\ z_{51} & z_{52} & z_{53} \end{array}\right| 
&\mapsto \left|\begin{array}{ccc} z_{21} & z_{22} & 0 \\ z_{31} & z_{32} & \bm{0} \\ z_{51} & z_{52} & z_{53} \end{array}\right|, \\
\left|\begin{array}{ccc} z_{21} & z_{22} & 0 \\ z_{31} & z_{32} & 0 \\ z_{51} & z_{52} & z_{54} \end{array}\right| 
&\mapsto \left|\begin{array}{ccc} z_{21} & z_{22} & 0 \\ z_{31} & z_{32} & 0 \\ z_{51} & z_{52} & z_{54} \end{array}\right|, \\
\left|\begin{array}{ccc} z_{21} & 0 & 0 \\ z_{31} & \bm{z_{33}} & 0 \\ z_{51} & z_{53} & z_{54} \end{array}\right| 
&\mapsto \left|\begin{array}{ccc} z_{21} & 0 & 0 \\ z_{31} & \bm{0} & 0 \\ z_{51} & z_{53} & z_{54} \end{array}\right| =0, \\
\left|\begin{array}{ccc} z_{22} & 0 & 0 \\ z_{32} & \bm{z_{33}} & 0 \\ z_{52} & z_{53} & z_{54} \end{array}\right| 
&\mapsto \left|\begin{array}{ccc} z_{22} & 0 & 0 \\ z_{32} & \bm{0} & 0 \\ z_{52} & z_{53} & z_{54} \end{array}\right| =0.
\end{align*}
\end{example}

\subsection{Action on tensor product modules}

Still assuming $E_\bull$ is free and standard, the above considerations apply equally well to the tensor modules $TE_\bull^D = E_\bull^{\ba^{(1)}} \otimes \cdots \otimes E_\bull^{\ba^{(m)}}$.  In particular, the module $T(\Rop_kE_\bull)^D$ has a basis consisting of
\[
 z_\tau = z_{\ba^{(1)},\bb^{(1)}} \otimes \cdots \otimes z_{\ba^{(m)},\bb^{(m)}},
\]
indexed by column-strict flagged fillings $\tau$ such that $\bb^{(j)} \leq_k \ba^{(j)}$ for all $j$.  The natural extension of Lemma~\ref{l.Na} follows from the single-column case:
\begin{lemma}\label{l.TN}
Assume $E_\bull$ is free and standard, and let $TN_k^D = \ker\left( T(\Rop_{k+1}E_\bull)^D \tto T(\Rop_{k}E_\bull)^D \right)$.  Then $TN_k^D$ is spanned by $z_\tau = z_{\ba^{(1)},\bb^{(1)}} \otimes \cdots \otimes z_{\ba^{(m)},\bb^{(m)}}$ such that $\bb^{(j)} \leq_{k+1} \ba^{(j)}$ for all $j$, and $a^{(j)}_i = b^{(j)}_i = k$ for some $i$ and some $j$. \qed
\end{lemma}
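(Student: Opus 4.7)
The plan is to bootstrap directly from the single-column Lemma~\ref{l.Na}, using that the map $T(\Rop_{k+1}E_\bull)^D\tto T(\Rop_kE_\bull)^D$ is by construction the tensor product of the single-column projections $\pi_j\colon \Rop_{k+1}E_\bull^{\ba^{(j)}}\tto\Rop_kE_\bull^{\ba^{(j)}}$, applied to free modules with explicit standard bases.

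First I would record the free bases. Each factor $\Rop_{k+1}E_\bull^{\ba^{(j)}}$ is free on the set $\{z_{\ba^{(j)},\bb^{(j)}} : \bb^{(j)}\leq_{k+1}\ba^{(j)}\}$, so the tensor product $T(\Rop_{k+1}E_\bull)^D$ is free on the pure tensors $z_\tau$ indexed by column-strict flagged fillings $\tau$ with $\bb^{(j)}\leq_{k+1}\ba^{(j)}$ for every $j$. The analogous statement with $\leq_k$ in place of $\leq_{k+1}$ gives a basis of the target $T(\Rop_kE_\bull)^D$.

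Next I would describe the map on basis elements. By Lemma~\ref{l.Na}, each $\pi_j$ sends $z_{\ba^{(j)},\bb^{(j)}}$ to itself when $\bb^{(j)}\leq_k\ba^{(j)}$ and to zero otherwise. Hence $(\pi_1\otimes\cdots\otimes\pi_m)(z_\tau)$ equals $z_\tau$ exactly when every $\bb^{(j)}\leq_k\ba^{(j)}$, and vanishes as soon as one factor fails. Since this map therefore acts diagonally on a free basis, $TN_k^D$ is freely spanned by those $z_\tau$ whose filling satisfies $\bb^{(j)}\leq_{k+1}\ba^{(j)}$ for every $j$ but $\bb^{(j)}\not\leq_k\ba^{(j)}$ for at least one $j$. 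To finish, I would translate this into the stated form: given $\bb^{(j)}\leq_{k+1}\ba^{(j)}$, strict inequality $b^{(j)}_i<a^{(j)}_i$ is already forced whenever $a^{(j)}_i\geq k+1$, so the only obstruction to $\bb^{(j)}\leq_k\ba^{(j)}$ is equality $b^{(j)}_i=a^{(j)}_i$ at some row with $a^{(j)}_i=k$, yielding precisely the condition in the statement.

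I do not anticipate any substantive obstacle: the argument is a direct upgrade of Lemma~\ref{l.Na} through the elementary principle that a tensor product of surjections of free modules, with compatible bases on source and target, has its kernel read off basis-element by basis-element. The only place to be careful is the diagonal action of $\pi_1\otimes\cdots\otimes\pi_m$ on the product basis, which is where the freeness and standardness hypotheses on $E_\bull$ are used.
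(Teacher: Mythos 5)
Your proof is correct and matches the paper's approach: the paper gives no separate argument for this lemma, stating only that it "follows from the single-column case," which is exactly your bootstrapping of Lemma~\ref{l.Na} through the fact that the projection is a tensor product of maps acting diagonally on the standard bases. The translation of ``$\bb^{(j)}\leq_{k+1}\ba^{(j)}$ but $\bb^{(j)}\not\leq_k\ba^{(j)}$'' into ``$a^{(j)}_i=b^{(j)}_i=k$ for some $i$'' is also carried out correctly.
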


As before, the elements generating this kernel are precisely those $z_\tau$ which map to $0$ under the projection.  They can be interpreted as products of minors $\Delta_\tau = \prod_{j=1}^m \Delta_{\ba^{(j)},\bb^{(j)}}$ such that some factor $\Delta_{\ba^{(j)},\bb^{(j)}}$ becomes zero when $z_{kk}=0$.

\medskip

We will compute the kernels of the maps $\Rop_{k+1}E_\bull^D \tto \Rop_kE_\bull^D$ in \S\ref{s.kernel}.

\subsection{Action on characters}

Assume $E_\bull$ is free and standard, and furthermore that $\kk$ is a field.  Simplifying notation, let $z_1,\ldots,z_n$ be a basis for $E_n$, and let $E_i$ be the span of $z_1,\ldots,z_i$, regarded as a quotient $E_n\tto E_i$ by the projection which sends $z_j \mapsto 0$ for $j>i$.  The next lemma shows that definition of $\Rop_k$ as an operator on flags and modules is compatible with its action on polynomials.

\begin{lemma}\label{l.rk-character}
For any diagram $D$, we have
\[
\Rop_k\tchar_T(E_\bull^D) = \tchar_T(\Rop_kE_\bull^D).
\]
\end{lemma}

\begin{proof}
For convenience, given a multi-index $\lambda = (\lambda_1,\lambda_2,\ldots)$, we use the temporary notation $\hat{\Rop}_k\lambda = (\lambda_1,\ldots,\lambda_{k-1},0,\lambda_k,\ldots)$.  Using this notation, the action of the Bergeron-Sottile operator on a polynomial $f \in \Z[x_1,x_2,\ldots]$, is characterized by
\[
  [x^\lambda](\Rop_kf) = [x^{\hat{\Rop}_k\lambda}](f),
\]
where $[x^\mu](f)$ is the coefficient of $x^\mu$ in $f$.

The lemma is equivalent to the assertion
\[
  \dim( \Rop_kE_\bull^D )_{\lambda} = \dim( E_\bull^D )_{\hat{\Rop}_k\lambda},
\]
where $(\cdot)_\mu$ is the weight space where $T$ acts by character $\mu$.  This is what we will prove.

Consider the surjection of flags $\pi\colon E_\bull \tto \Rop_kE_\bull$ determined by
\[
  z_i \mapsto \begin{cases} z_i & \text{if }i<k; \\ 0 & \text{if }i=k; \\ z_{i-1} & \text{if }i>k.\end{cases}
\]
This is equivariant with respect to the homomorphism $T\to T$ given by
\[
 (t_1,t_2,\ldots) \mapsto (t_1,\ldots,t_{k-1},t_{k+1},\ldots), 
\]
and it follows that the induced surjection $\pi^D\colon E_\bull^D \tto \Rop_kE_\bull^D$ acts on weight spaces by sending $(E_\bull^D)_\mu$ to $(\Rop_kE_\bull^D)_\lambda$ whenever $\mu_i=\lambda_i$ for $i<k$ and $\mu_i=\lambda_{i+1}$ for $i\geq k$.  In fact, given any flagged filling $\tau$ of $D$, with corresponding weight vector $\Delta_\tau$, we have $\pi^D(\Delta_\tau)=0$ if and only if $k$ appears in $\tau$; it follows that $\pi^D$ sends the weight space $(E_\bull^D)_\mu$ to $0$ if and only if $\mu_k>0$. Furthermore, $\pi^D$ restricts to a surjection $(E_\bull^D)_{\hat{\Rop}\lambda} \tto (\Rop_kE_\bull^D)_\lambda$ (as can be seen by constructing a section).  Combined, these observations imply that $(E_\bull^D)_{\hat{\Rop}\lambda} \to (\Rop_kE_\bull^D)_\lambda$ is an isomorphism.
\end{proof}

\begin{remark}
The quotient $E_\bull^D \tto \Rop_kE_\bull^D$ induced by the standard projection $E_\bull \to \Rop_kE_\bull$ arises from the quotient of the polynomial ring $\C[Z_\bull]$ which sets the diagonal entries $z_{ii}=0$ for $i\geq k$.  Analogously, the quotient $\pi^D$ used in the proof of Lemma~\ref{l.rk-character} comes from the quotient of $\C[Z_\bull]$ which sets the $k$th column to zero, and shifts the other columns to the right of the $k$th column one step to the left: $z_{ik}\mapsto0$ for $i\geq k$, and $z_{ij}\mapsto z_{i,j-1}$ for $j>k$.
\end{remark}

\section{The kernel of $\Rop_{k+1} \tto \Rop_k$}\label{s.kernel}

For the rest of the article, we assume $E_\bull$ is free and standard.


The general claim we need is a description of the kernel of the $\Rop_{k+1} \tto \Rop_k$ projection for diagram modules $E_\bull^D$, analogous to the ones we saw above for $E_\bull^\ba$ and $TE_\bull^D$.  Before stating it, we record a basic fact about pullbacks and pushouts.

\begin{proposition}\label{p.app-pushout}
Let $\kk$ be a commutative ring, and consider the following commutative diagram of $\kk$-modules, in which the top row is right-exact and the bottom row is left-exact:
\begin{equation}\label{e.pushout}
\begin{tikzcd}
 & A \ar[r,"f"] \ar[d,"\alpha"] & B \ar[r,"g"] \ar[d, "\beta"] & C \ar[r] \ar[d,"\gamma"] & 0 \\
0 \ar[r] & A' \ar[r,"f'"]  & B' \ar[r,"g'"]  & C'   .
\end{tikzcd}
\end{equation}
\begin{enumerate}
\item If the left-hand square (involving $A,A',B,B'$) is a pullback square, then $\gamma$ is injective.

\item If the right-hand square (involving $B,B',C,C'$) is a pushout square, then $\alpha$ is surjective.
\end{enumerate}
\end{proposition}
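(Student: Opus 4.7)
Both statements are diagram chases, and the plan is to combine exactness of one row with the universal property of the pullback or pushout square in the other.

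For (i), assuming the left square is a pullback, I start with $c \in \ker \gamma$ and aim to show $c = 0$. Right-exactness of the top row gives some $b \in B$ with $g(b) = c$. Commutativity and $\gamma(c) = 0$ force $\beta(b) \in \ker g'$, and left-exactness of the bottom row produces a unique $a' \in A'$ with $f'(a') = \beta(b)$. The pair $(a', b)$ then satisfies $f'(a') = \beta(b)$, so it lies in the pullback $A' \times_{B'} B$, and the pullback property lifts it to some $a \in A$ with $\alpha(a) = a'$ and $f(a) = b$. Finally, $g \circ f = 0$ (top-row exactness at $B$) yields $c = g(b) = g(f(a)) = 0$.

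For (ii), assuming the right square is a pushout, I start with $a' \in A'$ and aim to show $a' \in \im(\alpha)$. Here I would work with the concrete model $C' = (C \oplus B')/K$, where $K$ is the submodule spanned by the elements $(g(b), -\beta(b))$ for $b \in B$; this is the standard presentation making the square commute, with $\gamma(c) = [(c,0)]$ and $g'(b') = [(0,b')]$. Since $g' \circ f' = 0$ by bottom-row exactness at $B'$, the class $[(0, f'(a'))] = g'(f'(a'))$ vanishes in $C'$, so there exists $b \in B$ with $g(b) = 0$ and $\beta(b) = -f'(a')$. Exactness of the top row at $B$ lifts $b$ to some $a \in A$ with $f(a) = b$. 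Commutativity then gives $f'(\alpha(a)) = \beta(f(a)) = \beta(b) = -f'(a')$, and injectivity of $f'$ (bottom-row left-exactness) forces $\alpha(a) = -a'$, so $a' = \alpha(-a)$.

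The main subtlety will be in (ii): the universal property of the pushout produces maps \emph{out of} $C'$ but does not directly exhibit preimages witnessing that a given element of $C'$ is zero, so one must unwind $C'$ into its explicit cokernel presentation and be careful with the sign in $(g(b), -\beta(b))$ that makes the square commute as drawn. Part (i), by contrast, is a straightforward chase since the pullback property is tailored to producing elements of $A$ from compatible pairs in $A' \times B$.
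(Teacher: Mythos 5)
Your proof is correct: part (i) is the standard pullback chase, and in part (ii) your use of the explicit presentation $C'\cong(C\oplus B')/\{(g(b),-\beta(b)):b\in B\}$, together with injectivity of $f'$ and exactness of the top row at $B$, gives exactly the surjectivity of $\alpha$. The paper leaves this proposition as ``an exercise in unwinding the definitions,'' and your diagram chase is precisely that intended argument, so there is nothing further to reconcile.
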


The proof is a straightforward exercise.  The second statement of the proposition establishes the following:

\begin{lemma}\label{l.ker-gen}
Let $N_k^D = \ker( \Rop_{k+1}E_\bull^D \tto \Rop_kE_\bull^D )$, considered as a submodule of $\kk[Z_{k+1}]$.  Then $N_k^D$ is generated by the products $\Delta_\tau=\Delta_{\ba^{(1)},\bb^{(1)}}\cdots\Delta_{\ba^{(m)},\bb^{(m)}}$ which map to zero.  That is, for some $j$ and some $i$, we have $a^{(j)}_i=b^{(j)}_i=k$.
\end{lemma}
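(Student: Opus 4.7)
The plan is to derive Lemma~\ref{l.ker-gen} from its tensor-product analogue, Lemma~\ref{l.TN}, by applying Proposition~\ref{p.app-pushout}(2). The central commutative diagram to set up is
\[
\begin{tikzcd}[column sep=small]
TN_k^D \ar[r] \ar[d,"\alpha"] & T\Rop_{k+1}E_\bull^D \ar[r] \ar[d,two heads] & T\Rop_kE_\bull^D \ar[r] \ar[d,two heads] & 0 \\
0 \ar[r] & N_k^D \ar[r] & \Rop_{k+1}E_\bull^D \ar[r] & \Rop_kE_\bull^D,
\end{tikzcd}
\]
whose top row is right-exact by the definition of $TN_k^D$, whose bottom row is left-exact by the definition of $N_k^D$, and whose middle and right vertical surjections impose the $D$-exchange relations.

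The crux is to verify that the right-hand square is a pushout, since then Proposition~\ref{p.app-pushout}(2) immediately gives surjectivity of $\alpha$. I would establish this by applying \eqref{e.pushout-flag} to both flags $\Rop_{k+1}E_\bull$ and $\Rop_kE_\bull$, using $E$ itself as the common base. This is legitimate because $E$ surjects canonically onto every $\Rop_j E_i$, which is either $E_i$ or $E_{i-1}$; the universal-property check sketched after \eqref{e.pushout-flag} then goes through verbatim to show that $\Rop_jE_\bull^D$ is the pushout of $T\Rop_jE_\bull^D$ and $E^D$ over $TE^D$ for each $j$. Stacking these pushouts gives
\[
\begin{tikzcd}
TE^D \ar[r] \ar[d] & E^D \ar[d] \\
T\Rop_{k+1}E_\bull^D \ar[r] \ar[d] & \Rop_{k+1}E_\bull^D \ar[d] \\
T\Rop_kE_\bull^D \ar[r] & \Rop_kE_\bull^D,
\end{tikzcd}
\]
in which both the top square and the outer rectangle are pushouts; by the pasting lemma for pushouts, the bottom square is a pushout too, and this is exactly the right-hand square of the main diagram.

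With $\alpha$ surjective, the proof concludes: by Lemma~\ref{l.TN}, the module $TN_k^D$ is $\kk$-spanned by the tensors $z_\tau$ for which $a^{(j)}_i = b^{(j)}_i = k$ for some $i,j$. Their images under the embedding $\Rop_{k+1}E_\bull^D \hookrightarrow \kk[Z_{k+1}]$ are exactly the products of minors $\Delta_\tau$ described in the statement, so these products span, hence generate, $N_k^D$.

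The main delicate point — and the place I would be most careful to verify — is the pushout claim for the right-hand square, particularly the legitimacy of using the original top module $E$ as a common base for the two flags $\Rop_{k+1}E_\bull$ and $\Rop_kE_\bull$ whose own tops may differ. Once this is in hand, the surjectivity of $\alpha$ and the transfer of generators from $TN_k^D$ to $N_k^D$ are purely formal.
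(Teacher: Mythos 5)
Your proof is correct and takes essentially the same route as the paper: the same commutative diagram with exact rows, Proposition~\ref{p.app-pushout}(2) to conclude that $\alpha$ is surjective, and Lemma~\ref{l.TN} to identify the spanning elements $z_\tau$, whose images $\Delta_\tau$ then generate $N_k^D$. The only difference is that you justify the pushout property of the right-hand square by pasting instances of \eqref{e.pushout-flag} over the common base $E$, a point the paper asserts directly ``from the definitions.''
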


\begin{proof}
From the definitions, we have a diagram
\begin{equation}
\begin{tikzcd}
 & TN^D_k \ar[r] \ar[d, "\alpha"] & T(\Rop_{k+1}E_\bull)^D \ar[r] \ar[d, two heads] & T(\Rop_kE_\bull)^D \ar[r] \ar[d, two heads] & 0 \\
0 \ar[r] & N^D_k \ar[r] & \Rop_{k+1}E_\bull^D \ar[r] & \Rop_kE_\bull^D,
\end{tikzcd}
\end{equation}
in which the rows are exact, and the right-hand square is a push-out.  It follows from the above proposition that $\alpha$ is surjective.  Since $TN_k^D$ is generated by the nonzero $z_\tau$ which map to zero in $T(\Rop_{k+1}E_\bull)^D$ (Lemma~\ref{l.TN}), and $\alpha(z_\tau) = \Delta_\tau$, the assertion is proved.
\end{proof}

For the main theorems, we require two further facts, which follow from this lemma.

\begin{definition}
A diagram $D$ is said to be \define{$k$-full}\footnote{This is a variation on the notion of {\it $i$-free diagram}, used by Magyar \cite{magyar}.} if whenever a box appears in row $k$, there is a box below it in row $k+1$.  That is, for each column $\ba^{(j)}$ in $D$, either $k\not\in \ba^{(j)}$, or else $\{k,k+1\}\subset \ba^{(j)}$.
\end{definition}

\begin{lemma}\label{l.k-full}
If $D$ is $k$-full then $\Rop_{k+1}E_\bull^D \tto \Rop_kE_\bull^D$ is an isomorphism.  (That is, $N^D_k=0$.)
\end{lemma}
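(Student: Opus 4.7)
The plan is to apply Lemma~\ref{l.ker-gen} and show that under the $k$-full hypothesis there are simply no generators for $N^D_k$, so $N^D_k = 0$ and surjectivity (already known) upgrades to an isomorphism.

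First, I would invoke Lemma~\ref{l.ker-gen}: it suffices to show that no product $\Delta_\tau = \Delta_{\ba^{(1)},\bb^{(1)}} \cdots \Delta_{\ba^{(m)},\bb^{(m)}}$, arising from a column-strict flagged filling $\tau$ with $\bb^{(j)} \leq_{k+1} \ba^{(j)}$ for every $j$, can have $a^{(j)}_i = b^{(j)}_i = k$ for any $i,j$. Note that such $\Delta_\tau$ represent a spanning set for $\Rop_{k+1}E_\bull^D$ (coming from $T(\Rop_{k+1}E_\bull)^D$), so ruling this out kills the kernel.

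Suppose for contradiction that some $j$ and $i$ give $a^{(j)}_i = b^{(j)}_i = k$. Since $D$ is $k$-full and $k \in \ba^{(j)}$, also $k+1 \in \ba^{(j)}$, which means $a^{(j)}_{i+1} = k+1$ (the entry directly above $k$ in the ordered set). Because $\bb^{(j)} \leq_{k+1} \ba^{(j)}$ and $a^{(j)}_{i+1} = k+1 \geq k+1$, we must have a strict inequality $b^{(j)}_{i+1} < a^{(j)}_{i+1} = k+1$, i.e., $b^{(j)}_{i+1} \leq k$. On the other hand, column-strictness of the filling forces $b^{(j)}_{i+1} > b^{(j)}_i = k$, so $b^{(j)}_{i+1} \geq k+1$. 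These two conclusions contradict each other.

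Thus no generator of $N^D_k$ exists, so $N^D_k = 0$ and $\Rop_{k+1}E_\bull^D \tto \Rop_kE_\bull^D$ is an isomorphism. The only conceptual work is setting up the right compatibility between the three constraints (column-strictness of $\tau$, the flagging condition $\bb \leq_{k+1} \ba$, and $k$-fullness of $D$); once these are juxtaposed, the contradiction is immediate. There is no need to analyze the exchange relations directly, because Lemma~\ref{l.ker-gen} already tells us the kernel is generated by the ``obvious'' elements coming from the tensor product module.
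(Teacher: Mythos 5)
Your proof is correct and is essentially the paper's argument: both invoke Lemma~\ref{l.ker-gen} and then derive the same contradiction among $k$-fullness (forcing $a^{(j)}_{i+1}=k+1$), column-strictness (forcing $b^{(j)}_{i+1}\geq k+1$), and the flagging condition $\bb^{(j)}\leq_{k+1}\ba^{(j)}$ (forcing $b^{(j)}_{i+1}\leq k$). The paper phrases the conclusion as the corresponding minor $\Delta_{\ba^{(j)},\bb^{(j)}}(Z_{k+1})$ being already zero, which is the same as your statement that no nonzero generator of $N^D_k$ exists.
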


\begin{proof}
When $D$ is $k$-full, having $a_i^{(j)}=k$ implies $a_{i+1}^{(j)}=k+1$, and having $b_i^{(j)}=k$ implies $b_{i+1}^{(j)}\geq k+1$.  This violates the flagging condition for $\Rop_{k+1}E^D_\bull$, so the corresponding minor $\Delta_{\ba^{(j)},\bb^{(j)}}(Z_{k+1})$ was already zero, and Lemma~\ref{l.ker-gen} implies $N^D_k=0$.
\end{proof}

\begin{example}\label{ex.lemma1}
Let $D$ be the following diagram:
\begin{center}
\pspicture(0,0)(30,60)

\rput(-5,45){$1$}
\rput(-5,35){$2$}
\rput(-5,25){$3$}
\rput(-5,15){$4$}
\rput(-5,5){$5$}

\emptygraybox(0,40)
\emptygraybox(0,10)
\emptygraybox(0,0)

\blankbox(0,30)
\blankbox(0,20)


\emptygraybox(10,40)
\emptygraybox(10,10)

\blankbox(10,30)
\blankbox(10,20)
\blankbox(10,0)

\emptygraybox(20,40)
\emptygraybox(20,30)
\emptygraybox(20,10)
\emptygraybox(20,0)

\blankbox(20,20)

\endpspicture
\end{center}
%
This diagram is $k$-full for $k=1$, $k=2$, and $k=4$, but not for $k=3$ or $k=5$.  Consider $k=2$, so the lemma asserts that $\Rop_3E_\bull^D \to \Rop_2E_\bull^D$ is an isomorphism.  For $\Rop_3E_\bull^D$, we have a basis of $\Delta_\tau$ as $\tau$ varies over the following four fillings:
\begin{center}
\pspicture(0,0)(30,60)

\rput(-5,45){$1$}
\rput(-5,35){$2$}
\rput(-5,25){$2$}
\rput(-5,15){$3$}
\rput(-5,5){$4$}

\emptygraybox(0,40)
\emptygraybox(0,10)
\emptygraybox(0,0)

\blankbox(0,30) \rput(5,35){$1$}
\blankbox(0,20) \rput(5,25){$2$}


\emptygraybox(10,40)
\emptygraybox(10,10)

\blankbox(10,30) \rput(15,35){$1$}
\blankbox(10,20) \rput(15,25){$2$}
\blankbox(10,0)  \rput(15,5){$3$}

\emptygraybox(20,40)
\emptygraybox(20,30)
\emptygraybox(20,10)
\emptygraybox(20,0)

\blankbox(20,20)  \rput(25,25){$1$}

\endpspicture
\pspicture(-10,0)(30,60)

\emptygraybox(0,40)
\emptygraybox(0,10)
\emptygraybox(0,0)

\blankbox(0,30) \rput(5,35){$1$}
\blankbox(0,20) \rput(5,25){$2$}


\emptygraybox(10,40)
\emptygraybox(10,10)

\blankbox(10,30) \rput(15,35){$1$}
\blankbox(10,20) \rput(15,25){$2$}
\blankbox(10,0)  \rput(15,5){$4$}

\emptygraybox(20,40)
\emptygraybox(20,30)
\emptygraybox(20,10)
\emptygraybox(20,0)

\blankbox(20,20)  \rput(25,25){$1$}

\endpspicture
\pspicture(-10,0)(30,60)

\emptygraybox(0,40)
\emptygraybox(0,10)
\emptygraybox(0,0)

\blankbox(0,30) \rput(5,35){$1$}
\blankbox(0,20) \rput(5,25){$2$}


\emptygraybox(10,40)
\emptygraybox(10,10)

\blankbox(10,30) \rput(15,35){$1$}
\blankbox(10,20) \rput(15,25){$2$}
\blankbox(10,0)  \rput(15,5){$3$}

\emptygraybox(20,40)
\emptygraybox(20,30)
\emptygraybox(20,10)
\emptygraybox(20,0)

\blankbox(20,20)  \rput(25,25){$2$}

\endpspicture
\pspicture(-10,0)(30,60)

\emptygraybox(0,40)
\emptygraybox(0,10)
\emptygraybox(0,0)

\blankbox(0,30) \rput(5,35){$1$}
\blankbox(0,20) \rput(5,25){$2$}


\emptygraybox(10,40)
\emptygraybox(10,10)

\blankbox(10,30) \rput(15,35){$1$}
\blankbox(10,20) \rput(15,25){$2$}
\blankbox(10,0)  \rput(15,5){$4$}

\emptygraybox(20,40)
\emptygraybox(20,30)
\emptygraybox(20,10)
\emptygraybox(20,0)

\blankbox(20,20)  \rput(25,25){$2$}

\endpspicture
\end{center}
The corresponding $\Delta_\tau$ are:
\begin{align*}
 \left|\begin{array}{cc} z_{21} & \bm{z_{22}} \\ z_{31} & z_{32} \end{array}\right|
\cdot
\left|\begin{array}{ccc} z_{21} & \bm{z_{22}} & 0 \\ z_{31} & z_{32} & 0 \\ z_{51} & z_{52} & z_{53} \end{array}\right|
\cdot
z_{31} \\
 \left|\begin{array}{cc} z_{21} & \bm{z_{22}} \\ z_{31} & z_{32} \end{array}\right|
\cdot
\left|\begin{array}{ccc} z_{21} & \bm{z_{22}} & 0 \\ z_{31} & z_{32} & 0 \\ z_{51} & z_{52} & z_{54} \end{array}\right|
\cdot
z_{31} \\
 \left|\begin{array}{cc} z_{21} & \bm{z_{22}} \\ z_{31} & z_{32} \end{array}\right|
\cdot
\left|\begin{array}{ccc} z_{21} & \bm{z_{22}} & 0 \\ z_{31} & z_{32} & 0 \\ z_{51} & z_{52} & z_{53} \end{array}\right|
\cdot
z_{32} \\
 \left|\begin{array}{cc} z_{21} & \bm{z_{22}} \\ z_{31} & z_{32} \end{array}\right|
\cdot
\left|\begin{array}{ccc} z_{21} & \bm{z_{22}} & 0 \\ z_{31} & z_{32} & 0 \\ z_{51} & z_{52} & z_{54} \end{array}\right|
\cdot
z_{32}.
\end{align*}
The map $\Rop_3 \to \Rop_2$ is given by $z_{22}\mapsto 0$, so these four polynomials map to
\begin{align*}
 \left|\begin{array}{cc} z_{21} & \bm{0} \\ z_{31} & z_{32} \end{array}\right|
\cdot
\left|\begin{array}{ccc} z_{21} & \bm{0} & 0 \\ z_{31} & z_{32} & 0 \\ z_{51} & z_{52} & z_{53} \end{array}\right|
\cdot
z_{31} \\
 \left|\begin{array}{cc} z_{21} & \bm{0} \\ z_{31} & z_{32} \end{array}\right|
\cdot
\left|\begin{array}{ccc} z_{21} & \bm{0} & 0 \\ z_{31} & z_{32} & 0 \\ z_{51} & z_{52} & z_{54} \end{array}\right|
\cdot
z_{31} \\
 \left|\begin{array}{cc} z_{21} & \bm{0} \\ z_{31} & z_{32} \end{array}\right|
\cdot
\left|\begin{array}{ccc} z_{21} & \bm{0} & 0 \\ z_{31} & z_{32} & 0 \\ z_{51} & z_{52} & z_{53} \end{array}\right|
\cdot
z_{32} \\
 \left|\begin{array}{cc} z_{21} & \bm{0} \\ z_{31} & z_{32} \end{array}\right|
\cdot
\left|\begin{array}{ccc} z_{21} & \bm{0} & 0 \\ z_{31} & z_{32} & 0 \\ z_{51} & z_{52} & z_{54} \end{array}\right|
\cdot
z_{32}.
\end{align*}
So this map is indeed an isomorphism---but not the identity!
\end{example}


\begin{remark}
Lemma~\ref{l.k-full} can be proved directly using standard monomial theory, which makes a pleasant exercise.  
\end{remark}

\begin{definition}
A diagram $D$ has a {\it descent} at $k$ if, possibly after re-ordering columns, there is a column $\ba^{(j)}$ such that
\begin{itemize}
\item $\ba^{(j)} = \{a_1^{(j)} < \cdots < a_r^{(j)}=k\}$, i.e., the last box in this column is in row $k$;
\item for $j'<j$, the columns $\ba^{(j')}$ (i.e., to the left of $\ba^{(j)}$) are all $k$-full; and
\item for $j'>j$, the columns $\ba^{(j')}$ (i.e., to the right of $\ba^{(j)}$) all have $\ba^{(j')}_{\leq k+1} \subseteq \ba^{(j)}$, where $\ba_{\leq k+1}$ is the subset $\ba \cap \{1,\ldots,k+1\}$, so in particular $k+1 \not\in \ba^{(j')}$.
\end{itemize}
\end{definition}

If $D$ has a descent at $k$, the witnessing column $\ba^{(j)}$ is unique up to repetition.  (That is, if both $\ba^{(j)}$ and $\ba^{(j')}$ are witnesses, then $\ba^{(j)}=\ba^{(j')}$ as sets.)  By convention, we will choose the leftmost witnessing column for a descent.  Following \cite{fgrs}, the box in position $(k,j)$ is called a \define{border cell}.

Given a diagram $D$ with a descent at $k$, let $s_kD$ be the diagram obtained by deleting the border cell $(k,j)$ and then swapping rows $k$ and $k+1$.

\begin{example}\label{ex.descent}
The diagram from Example~\ref{ex.lemma1} has a descents at $k=3$ and $k=5$, with the border cells marked:
\begin{center}
\pspicture(0,0)(30,60)

\rput(-5,45){$1$}
\rput(-5,35){$2$}
\rput(-5,25){$3$}
\rput(-5,15){$4$}
\rput(-5,5){$5$}

\emptygraybox(0,40)
\emptygraybox(0,10)
\emptygraybox(0,0)

\blankbox(0,30)
\blankbox(0,20) \rput(5,25){$\bull$}


\emptygraybox(10,40)
\emptygraybox(10,10)

\blankbox(10,30)
\blankbox(10,20)
\blankbox(10,0) \rput(15,5){$\bull$}

\emptygraybox(20,40)
\emptygraybox(20,30)
\emptygraybox(20,10)
\emptygraybox(20,0)

\blankbox(20,20)

\endpspicture
\end{center}
%
The corresponding diagrams $s_3D$ and $s_5D$ are shown below.

\begin{center}

\pspicture(0,0)(40,70)

\rput(15,60){$s_3D$}

\emptygraybox(0,40)
\emptygraybox(0,20)
\emptygraybox(0,10)
\emptygraybox(0,0)

\blankbox(0,30)


\emptygraybox(10,40)
\emptygraybox(10,20)

\blankbox(10,30)
\blankbox(10,10)
\blankbox(10,0)

\emptygraybox(20,40)
\emptygraybox(20,30)
\emptygraybox(20,20)
\emptygraybox(20,0)

\blankbox(20,10)

\endpspicture
\pspicture(0,0)(30,70)

\rput(15,60){$s_5D$}

\emptygraybox(0,40)
\emptygraybox(0,10)
\emptygraybox(0,0)

\blankbox(0,30)
\blankbox(0,20) 


\emptygraybox(10,40)
\emptygraybox(10,10)
\emptygraybox(10,0)

\blankbox(10,30)
\blankbox(10,20)


\emptygraybox(20,40)
\emptygraybox(20,30)
\emptygraybox(20,10)
\emptygraybox(20,0)

\blankbox(20,20)

\endpspicture

\end{center}
\end{example}

\begin{example}
It can happen that $k$ is a descent of $s_kD$.  For instance, with $D=\big[\{1\},\{2\}\big]$, we have $s_1D=\big[\{1\}\big]$ and $s_1 s_1 D = \emptyset$.
\end{example}

\begin{lemma}\label{l.k-descent}
If $D$ has a descent at $k$, then
\[
  N_k^D \isom K_k \otimes \Rop_{k+1}E_\bull^{s_kD},
\]
where $K_k= \ker(\Rop_{k+1}E_{k}\tto \Rop_kE_{k}) = \ker(E_k \tto E_{k-1})$.
\end{lemma}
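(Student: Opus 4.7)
The strategy is to construct the asserted isomorphism $\phi\colon K_k \otimes \Rop_{k+1}E_\bull^{s_kD} \to N_k^D$ directly on basis elements, then verify it is well-defined, surjective, and injective.

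I would define $\phi(z_{kk}\otimes z_\sigma)$ combinatorially, for $\sigma$ a column-strict flagged filling of $s_k D$ in the $\Rop_{k+1}$ flag.  The result is the tensor $z_\tau$ for the filling $\tau$ of $D$ obtained from $\sigma$ by: keeping the $k$-full left columns unchanged; extending the $\sigma$-filling of $\ba^{(j)}\setminus\{k\}$ in the descent column by inserting $z_{kk}$ in the row-$k$ box; and reinterpreting the row-$(k+1)$ entries of each right column $s_k\ba^{(l)}$ as row-$k$ entries of $\ba^{(l)}$ (valid because in the $\Rop_{k+1}$ flag both constraints read ``entry $\leq k$'').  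Composing with the quotient $T(\Rop_{k+1}E_\bull)^D \tto \Rop_{k+1}E_\bull^D$ yields $\phi$.  The descent-column wedge includes $z_{kk}\in K_k$, so the image is killed by $\Rop_{k+1}E_\bull^D \to \Rop_k E_\bull^D$ and lands in $N_k^D$.

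The key technical step is showing $\phi$ descends through $\Rop_{k+1}E_\bull^{s_k D}$.  Inside $\kk[Z_{k+1}]$, the identity $\Delta_{\ba^{(j)},\bb^{(j)}_\sigma\cup\{k\}}(Z_{k+1}) = z_{kk}\cdot\Delta_{\ba^{(j)}\setminus\{k\},\bb^{(j)}_\sigma}(Z_{k+1})$, obtained by cofactor expansion along the last column, handles the descent-column factor, while $k$-full left columns contribute unchanged factors.  For right columns I would express the relevant minors as maximal minors of the augmented matrix $\hat Z$ (as in the discussion preceding Remark~\ref{r.cubic}): under this realization, the $s_k D$-shape and $D$-shape for a right column differ only by a canonical relabeling of hatted indices at positions $k$ and $k+1$, so the Pl\"ucker relations generating the exchange ideal are identified.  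This shows $s_k D$-exchange relations are sent to $D$-exchange relations.

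For surjectivity I would invoke Lemma~\ref{l.ker-gen}: $N_k^D$ is generated by $\Delta_\tau$ with a critical pair $a_i^{(l)}=b_i^{(l)}=k$.  If the critical pair lies in the descent column, $\Delta_\tau$ is directly in the image.  Otherwise it lies in a right column $l$, and the containment $\ba^{(l)}_{\leq k+1}\subseteq \ba^{(j)}$ enables a two-column Sylvester-type exchange between columns $j$ and $l$ (again via $\hat Z$) that transfers the critical $k$-entry into the descent column; iteration reduces all generators.  For injectivity, both sides admit spanning sets indexed by column-strict flagged fillings of $s_k D$ (on the left by construction, and on the right after the above reduction of critical-pair generators), with $\phi$ realizing the identity on indexing sets; together with surjectivity and well-definedness, this forces $\phi$ to be a bijection.

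The main obstacle is the well-definedness step, since general $D$-exchange relations can be genuinely cubic (Remark~\ref{r.cubic}).  However, the descent hypothesis imposes enough rigidity---all left columns are $k$-full, and all right columns satisfy the containment condition $\ba^{(l)}_{\leq k+1}\subseteq \ba^{(j)}$---that the Pl\"ucker-relation bookkeeping in $\hat Z$ suffices to identify exchange ideals on the two sides.
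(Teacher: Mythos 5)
Your overall route coincides with the paper's: start from Lemma~\ref{l.ker-gen}, move the critical entry into the border cell by a Sylvester exchange between the descent column and the offending right column, factor out $z_{kk}$ by cofactor expansion, and identify $\Rop_{k+1}E_\bull^{D'}$ (where $D'=D\setminus(k,j)$) with $\Rop_{k+1}E_\bull^{s_kD}$ by swapping rows $k$ and $k+1$. But the heart of the proof is missing from your sketch. You assert that the exchange ``transfers the critical $k$-entry into the descent column,'' whereas the Sylvester relation actually produces a sum over \emph{all} exchanges of that entry with every entry of the descent column in the $\hat{Z}$ realization, genuine or hatted, and one must check term by term that each summand either vanishes or has the desired form. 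This is precisely where the descent hypotheses are used: exchanges with a hatted $\textcolor{blue}{\hat\imath}$, $i<k$, die because the containment $\ba^{(l)}_{\leq k+1}\subseteq\ba^{(j)}$ forces a repeated hatted column; exchanges with a genuine entry put $k$ in the border cell because (after reducing to the case that the border cell is not already filled by $k$) all entries of the descent column's filling are $<k$; and exchanges with a hatted $\textcolor{blue}{\hat\imath}$, $i>k$, either repeat a hatted column or push an entry $>k$ into row $k$ of the right column, which is killed by the $\Rop_{k+1}$ flagging (here one uses column-strictness: entries below the critical $k$ are all $>k$). Without this case analysis the ``transfer'' claim is a restatement of what must be proved and the hypotheses of the lemma never enter; with it, a single exchange suffices and no iteration is needed.

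Two of your auxiliary arguments are also unsound as stated, although their conclusions are true for simpler reasons. Well-definedness should not be argued by ``identifying the Pl\"ucker relations generating the exchange ideal'': by Remark~\ref{r.cubic} the exchange ideal is not quadratically generated in general, and the descent hypothesis does not repair this. What actually saves you is that in $\kk[Z_{k+1}]$ one has $z_{k+1,k+1}=0$, so rows $k$ and $k+1$ of $Z_{k+1}$ have identical shape and swapping them is a $B$-equivariant ring automorphism carrying the spanning products $\Delta_\sigma$ for $s_kD$ exactly to those for $D'$; composing this identification with multiplication by $z_{kk}$ (together with your cofactor identity) produces your map $\phi$ in one stroke, and then well-definedness \emph{and} injectivity are automatic, since multiplication by $z_{kk}$ in a polynomial ring is injective. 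In particular, your stated injectivity argument---that spanning sets indexed by the same fillings plus surjectivity force a bijection---is not a valid inference (the two sides could a priori impose different relations on those spanning sets) and should be replaced by the observation above.
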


In our setting, $K_k$ is the one-dimensional $B$-module with character $x_k$.  
To prove the lemma, we will use the classical quadratic straightening relations.

\begin{proof}
Suppose $\ba^{(j)} = \{a_1^{(j)} < \cdots < a_r^{(j)}=k \}$ is the column witnessing the descent, so  for all $j'<j$, $\ba^{(j')}$ is $k$-full, and for all $j'>j$, we have $\ba^{(j')}_{\leq k+1} \subseteq \ba^{(j)}$.  Further re-ordering if necessary, we may assume that {\it all} of the $k$-full columns are to the left of $\ba^{(j)}$; that is, for every $j'\geq j$, we have $\ba^{(j')} \cap \{k,k+1\} = \{k\}$.

We know that $N_k^D$ is generated by the nonzero $\Delta_\tau(Z_{k+1})$ such that the filling $\tau$ puts an entry ``$k$'' in the $k$th row; that is, writing $\Delta_\tau = \Delta_{\ba^{(1)},\bb^{(1)}} \cdots \Delta_{\ba^{(m)},\bb^{(m)}}$, we have $a^{(j')}_i=b^{(j')}_i=k$ for some $i,j'$.  In our setting, such an entry can occur in a column $\ba^{(j')}$ with $j'\geq j$, and only in these columns.  (Placing a ``$k$'' in the $k$th row of a $k$-full column makes $\Delta_\tau(Z_{k+1})=0$.)  

\medskip
\noindent
{\bf Claim.}  The $\Delta_\tau$ such that $b^{(j)}_r=k$ suffice to generate $N_k^D$.

\medskip

Specifically, suppose row $k$ in column $j'$ is filled by $k$, for some $j'>j$; that is, $a^{(j')}_i=b^{(j')}_i=k$.  We will see that
\begin{equation}
  \Delta_{\ba^{(j)},\bb^{(j)}}\cdot \Delta_{\ba^{(j')},\bb^{(j')}} = \sum_{\bb,\bb'} \pm \Delta_{\ba^{(j)},\bb}\cdot \Delta_{\ba^{(j')},\bb'},
\end{equation}
a sum over fillings $\bb,\bb'$ such that $\bb$ places $k$ in the $k$th row of the $j$th column; that is, $a^{(j)}_r=b_r=k$.  If $\ba^{(j)}$ already has $k$ in this position, there is nothing to prove, so for the rest of the proof we assume $\ba^{(j)}$ does not contain $k$.  Together with the flagging condition, this implies all entries in $\ba^{(j)}$ are strictly less than $k$.

Analogously to the situation explained in \S\ref{ss.d-exchange}, we can represent any minor $\Delta_{\ba,\bb}(Z_{k+1})$ as a maximal minor of the augmented matrix $\hat{Z}_{k+1} = (Z_{k+1}\,|\,I)$.  Selecting the entry ``$k$'' in the $k$th row of the column $\ba^{(j')}$, consider the Sylvester exchange relations
\[
  \Delta_{\ba^{(j)},\bb^{(j)}} \cdot \Delta_{\ba^{(j')},\bb^{(j')}} = \sum_{\bb,\bb'} \Delta_{\ba^{(j)},\bb}\cdot \Delta_{\ba^{(j')},\bb'},
\]
the sum over $\bb,\bb'$ obtained by exhanging the entry in position $(k,j')$ with an entry in position $(i,j)$ (for all $i$).  We describe the term $\Delta_{\ba^{(j)},\bb}\cdot \Delta_{\ba^{(j')},\bb'}$ by distinguishing three cases:
\begin{enumerate}
\item $i\not\in\ba^{(j)}$ and $i<k$. Since $\ba^{(j)}\supseteq \ba^{(j')}_{\leq k+1}$, this means $i\not\in\ba^{(j')}$, and that the corresponding entries of $\bb^{(j)}$ and $\bb^{(j')}$ are both $\textcolor{blue}{\hat\imath}$.  Upon performing the exchange, $\bb'$ has two copies of $\textcolor{blue}{\hat\imath}$, so the corresponding minor $\Delta_{\ba^{(j')},\bb'}$ vanishes.\label{pfcase1}

\item $i\in\ba^{(j)}$, so $i\leq k$.  By assumption, all entries of $\bb^{(j)}$ are less than $k$.  So after performing the exchange, $\bb$ has largest entry equal to $k$; after re-arranging entries to increasing order (and possibly introducing a sign in the process), this must appear in the bottom row of $\ba^{(j)}$, which is row $k$.  So in this case, we obtain a term $\pm\Delta_{\ba^{(j)},\bb}\cdot \Delta_{\ba^{(j')},\bb'}$ of the desired form.\label{pfcase2}

\item $i>k$.  Such an entry of $\bb^{(j)}$ below row $k$ is equal to $\textcolor{blue}{\hat\imath}$.  After exchanging it with the entry $k$ in row $k$ of $\bb^{(j')}$ to obtain $\bb$, either we get a repeated $\textcolor{blue}{\hat\imath}$ in $\bb'$ again (so the minor is zero), or else---after re-arranging to increasing order---the entries of $\bb^{(j')}$ in rows $k+1$ through $i$ get shifted up, so that $\bb'$ has an entry in row $k$, equal to the first entry below row $k$ of $\bb^{(j')}$.  Since $\bb^{(j')}$ had $k$ in row $k$, all the entries below row $k$ are strictly greater than $k$, meaning $\bb'$ has an entry $>k$ in row $k$.  Therefore $\Delta_{\ba^{(j')},\bb'}(Z_{k+1})=0$, by the flagging condition.\label{pfcase3}
\end{enumerate}
To summarize: the only nonzero terms are those in the second case, and these are of the asserted form, so the claim is proved.

The claim shows that $N_k^D$ is generated by $\Delta_\tau$ having some factor divisible by $z_{kk}$.  Pulling this factor out, we have
\[
  N_k^D = K_k \otimes \Rop_{k+1}E_\bull^{D'}
\]
where $D' = D \setminus (k,j)$ is the diagram obtained by removing the border cell witnessing the descent at $k$.  Under the operator $\Rop_{k+1}$, the flagging conditions $\Rop_{k+1}E_\bull^{D'}$ treat rows $k$ and $k+1$ identically, so they may be swapped.  Performing the swap, we obtain $\Rop_{k+1}E_\bull^{D'}=\Rop_{k+1}E_\bull^{s_kD}$, completing the proof of the lemma.
\end{proof}

\begin{example}
Consider $\ba^{(j)}=\{2,3\}$ and $\ba^{(j')}=\{2,3,5\}$, with $k=3$, as in \S\ref{ss.d-exchange}, \eqref{e.2column-relation}. When the minors in the relation \eqref{e.2column-relation} are restricted to $Z_{4}$, only one term survives on the right-hand side, and we have
\begin{equation}\label{e.2column-relation-zdot}
\begin{aligned}
\raisebox{-35pt}{
\pspicture(0,0)(20,50)

\emptygraybox(0,40)
\emptygraybox(0,10)
\emptygraybox(0,0)

\blankbox(0,30) \rput(5,35){$1$}
\blankbox(0,20) \rput(5,25){$2$}


\emptygraybox(10,40)
\emptygraybox(10,10)

\blankbox(10,30) \rput(15,35){$1$}
\blankbox(10,20) \rput(15,25){$\bm{3}$}
\blankbox(10,0)  \rput(15,5){$5$}

\endpspicture
}
&=
\raisebox{-35pt}{
\pspicture(0,0)(20,50)

\emptygraybox(0,10) \rput(5,15){\small$\textcolor{blue}{\hat{4}}$}
\emptygraybox(0,0) \rput(5,5){\small$\textcolor{blue}{\hat{5}}$}

\blankbox(0,40) \rput(5,45){$\bm{3}$}
\blankbox(0,30) \rput(5,35){$1$}
\blankbox(0,20) \rput(5,25){$2$}


\emptygraybox(10,40) \rput(15,45){\small$\textcolor{blue}{\hat{1}}$}
\emptygraybox(10,20) \rput(15,25){\small$\textcolor{blue}{\hat{1}}$}
\emptygraybox(10,10) \rput(15,15){\small$\textcolor{blue}{\hat{4}}$}

\blankbox(10,30) \rput(15,35){$1$}
\blankbox(10,0)  \rput(15,5){$5$}

\endpspicture
}
+
\raisebox{-35pt}{
\pspicture(0,0)(20,50)

\emptygraybox(0,40) \rput(5,45){\small$\textcolor{blue}{\hat{1}}$}
\emptygraybox(0,10) \rput(5,15){\small$\textcolor{blue}{\hat{4}}$}
\emptygraybox(0,0) \rput(5,5){\small$\textcolor{blue}{\hat{5}}$}

\blankbox(0,30) \rput(5,35){$\bm{3}$}
\blankbox(0,20) \rput(5,25){$2$}


\emptygraybox(10,40) \rput(15,45){\small$\textcolor{blue}{\hat{1}}$}
\emptygraybox(10,10) \rput(15,15){\small$\textcolor{blue}{\hat{4}}$}

\blankbox(10,30) \rput(15,35){$1$}
\blankbox(10,20) \rput(15,25){$1$}
\blankbox(10,0)  \rput(15,5){$5$}

\endpspicture
}
+
\raisebox{-35pt}{
\pspicture(0,0)(20,50)

\emptygraybox(0,40) \rput(5,45){\small$\textcolor{blue}{\hat{1}}$}
\emptygraybox(0,10) \rput(5,15){\small$\textcolor{blue}{\hat{4}}$}
\emptygraybox(0,0) \rput(5,5){\small$\textcolor{blue}{\hat{5}}$}

\blankbox(0,30) \rput(5,35){$1$}
\blankbox(0,20) \rput(5,25){$\bm{3}$}


\emptygraybox(10,40) \rput(15,45){\small$\textcolor{blue}{\hat{1}}$}
\emptygraybox(10,10) \rput(15,15){\small$\textcolor{blue}{\hat{4}}$}

\blankbox(10,30) \rput(15,35){$1$}
\blankbox(10,20) \rput(15,25){$2$}
\blankbox(10,0)  \rput(15,5){$5$}

\endpspicture
}
+
\raisebox{-35pt}{
\pspicture(0,0)(20,50)

\emptygraybox(0,40) \rput(5,45){\small$\textcolor{blue}{\hat{1}}$}
\emptygraybox(0,0) \rput(5,5){\small$\textcolor{blue}{\hat{5}}$}

\blankbox(0,30) \rput(5,35){$1$}
\blankbox(0,20) \rput(5,25){$2$}
\blankbox(0,10) \rput(5,15){${\bm{3}}$}


\emptygraybox(10,40) \rput(15,45){\small$\textcolor{blue}{\hat{1}}$}
\emptygraybox(10,20) \rput(15,25){\small$\textcolor{blue}{\hat{4}}$}
\emptygraybox(10,10) \rput(15,15){\small$\textcolor{blue}{\hat{4}}$}

\blankbox(10,30) \rput(15,35){$1$}
\blankbox(10,0)  \rput(15,5){$5$}

\endpspicture
}
+
\raisebox{-35pt}{
\pspicture(0,0)(20,50)

\emptygraybox(0,40) \rput(5,45){\small$\textcolor{blue}{\hat{1}}$}
\emptygraybox(0,10) \rput(5,15){\small$\textcolor{blue}{\hat{4}}$}

\blankbox(0,30) \rput(5,35){$1$}
\blankbox(0,20) \rput(5,25){$2$}
\blankbox(0,0) \rput(5,5){${\bm{3}}$}

\emptygraybox(10,40) \rput(15,45){\small$\textcolor{blue}{\hat{1}}$}
\emptygraybox(10,20) \rput(15,25){\small$\textcolor{blue}{\hat{5}}$}
\emptygraybox(10,10) \rput(15,15){\small$\textcolor{blue}{\hat{4}}$}

\blankbox(10,30) \rput(15,35){$1$}
\blankbox(10,0)  \rput(15,5){$5$}

\endpspicture
}
\\
&=
%
\quad\; 0\quad + \quad\;0\quad +
\raisebox{-35pt}{
\pspicture(0,0)(20,50)

\emptygraybox(0,40) 
\emptygraybox(0,10) 
\emptygraybox(0,0) 

\blankbox(0,30) \rput(5,35){$1$}
\blankbox(0,20) \rput(5,25){$\bm{3}$}


\emptygraybox(10,40) 
\emptygraybox(10,10) 

\blankbox(10,30) \rput(15,35){$1$}
\blankbox(10,20) \rput(15,25){$2$}
\blankbox(10,0)  \rput(15,5){$5$}

\endpspicture
}
+ \quad0\quad + \quad 0.
\end{aligned}
\end{equation}
The vanishing of the last term is an instance of Case~\eqref{pfcase3} examined in the above proof.
\end{example}

\begin{example}\label{ex.lemma2}
Continuing to examine the diagram from Example~\ref{ex.descent}, consider the descent at $k=3$.
\begin{center}
\pspicture(0,0)(30,60)

\emptygraybox(0,40)
\emptygraybox(0,10)
\emptygraybox(0,0)

\blankbox(0,30)
\blankbox(0,20) \rput(5,25){$\bm{3}$}


\emptygraybox(10,40)
\emptygraybox(10,10)

\blankbox(10,30)
\blankbox(10,20)
\blankbox(10,0)

\emptygraybox(20,40)
\emptygraybox(20,30)
\emptygraybox(20,10)
\emptygraybox(20,0)

\blankbox(20,20)

\endpspicture
\end{center}
%
Flagged column-strict fillings of $D$ with $\bm{3}$ in the indicated border cell are evidently in bijection with flagged column-strict fillings of $D'$, the diagram obtained by removing the border cell from $D$.  
Furthermore, fillings of $D'$ which respect the flagging condition for $\Rop_{k+1}$ are in evident bijection with those of $s_3D$.  The $B$-modules $\Rop_4E_\bull^{D'}$ and $\Rop_4E_\bull^{s_3D}$ are isomorphic under the corresponding identification of minors.  For instance, the fillings
\begin{center}
\pspicture(0,0)(40,70)

\rput(-5,45){$1$}
\rput(-5,35){$2$}
\rput(-5,25){$3$}
\rput(-5,15){$3$}
\rput(-5,5){$4$}

\rput(15,60){$\tau'$}

\emptygraybox(0,40)
\emptygraybox(0,20)
\emptygraybox(0,10)
\emptygraybox(0,0)

\blankbox(0,30) \rput(5,35){$2$}


\emptygraybox(10,40)
\emptygraybox(10,10)

\blankbox(10,30)  \rput(15,35){$1$}
\blankbox(10,20)  \rput(15,25){$3$}
\blankbox(10,0)  \rput(15,5){$4$}

\emptygraybox(20,40)
\emptygraybox(20,30)
\emptygraybox(20,10)
\emptygraybox(20,0)

\blankbox(20,20)  \rput(25,25){$2$}

\endpspicture
\pspicture(0,0)(30,70)

\rput(15,60){$s_3\tau$}

\emptygraybox(0,40)
\emptygraybox(0,20)
\emptygraybox(0,10)
\emptygraybox(0,0)

\blankbox(0,30)  \rput(5,35){$2$}


\emptygraybox(10,40)
\emptygraybox(10,20)

\blankbox(10,30)  \rput(15,35){$1$}
\blankbox(10,10)  \rput(15,15){$3$}
\blankbox(10,0)  \rput(15,5){$4$}

\emptygraybox(20,40)
\emptygraybox(20,30)
\emptygraybox(20,20)
\emptygraybox(20,0)

\blankbox(20,10)  \rput(25,15){$2$}

\endpspicture
\end{center}
correspond to products of minors
\begin{align*}
\Delta_{\tau'}(Z_4) &= z_{22}
\cdot
\left|\begin{array}{ccc} z_{21} & 0 & 0 \\ z_{31} & z_{33} & 0 \\ z_{51} & z_{53} & z_{54} \end{array}\right|
\cdot
z_{32} \\
\intertext{and}
\Delta_{s_3\tau}(Z_4) &= z_{22}
\cdot
\left|\begin{array}{ccc} z_{21} & 0 & 0 \\ z_{41} & z_{43} & {\bm0} \\ z_{51} & z_{53} & z_{54} \end{array}\right|
\cdot
z_{42}
\end{align*}
(In $\Delta_{s_3\tau}(Z_4)$, note that $z_{44}=0$.)
\end{example}

\section{A recurrence relation}\label{s.recursion}

Now we come to the main theorem concerning characters of flagged diagram modules.  We continue to assume $E_\bull$ is standard, and from now on, we also assume $\kk$ is a field.  Given a diagram $D$ and a flag $E_\bull$, recall that $\fS_D = \tchar_T(E_\bull^D)$ is the corresponding character, and that our conventions ensure $\fS_D$ is a polynomial in $x_1,\ldots,x_n$.

The theorem requires a further condition on diagrams.

\begin{definition}
A diagram $D$ is \define{clear} if for every $k$, either $D$ is $k$-full or $D$ has a descent at $k$.
\end{definition}

\begin{example}\label{ex.rothe}
Given a permutation $w$, the Rothe diagram $D(w)$ consists of the boxes $(i,j)$ such that $j<w(i)$ and $i<w^{-1}(j)$.  (Writing the permuation matrix by placing dots in positions $(i,w(i))$, the boxes of $D(w)$ are those remaining after eliminating the boxes below and right of the dots.)  Rothe diagrams of permutations are clear: for each $k$, either $w(k)>w(k+1)$ (so $k$ is a \define{descent} of $w$) or $w(k)<w(k+1)$.  In the latter case, $D(w)$ is readily seen to be $k$-full.

On the other hand, if $k$ is a descent of $w$, then it is a descent of $D(w)$, witnessed by the border cell $(k,j)$ where $j=w(k+1)$ (cf.~\cite[Definition~4.4]{fgrs}).
\end{example}

\begin{example}\label{ex.repeat}
If $D$ is clear, and $D'$ is obtained by repeating or omitting some columns of $D$, then $D'$ is also clear.  
In particular, for a permutation $w$, the diagram $m\cdot D(w)$ (obtained by repeating each column of the Rothe diagram $m$ times) is clear.  (Such diagrams are not always diagrams of permutations; for example, $2\cdot D(2143)$ is not equal to $D(w)$ for any $w$.)
\end{example}

\begin{example}
On the other hand, this imposes a nontrivial restriction---not every diagram is clear.  The smallest non-example is the single-column diagram $D = \big[ \{1,3\} \big]$; it is neither $1$-full nor has a descent at $1$.  A more interesting one is shown below; it is neither $3$-full nor has a descent at $3$.
\begin{center}
\pspicture(0,0)(30,30)

\emptygraybox(0,10)

\blankbox(0,20) 
\blankbox(0,0)

\emptygraybox(10,20)

\blankbox(10,10)
\blankbox(10,0)  

\endpspicture
\end{center}
(In the terminology of \cite{rs}, this diagram is not northwest, but it is $\%$-avoiding.)
\end{example}

\begin{theorem}\label{t.recur}
Suppose $D$ is a clear diagram.  Then
\begin{equation}\label{e.recur}
  \fS_D = \Rop_1\left( \fS_D \right) + \sum_{k\in \mathrm{Des(D)}} x_k\cdot \Rop_{k+1}\left( \fS_{s_kD} \right),
\end{equation}
where $\mathrm{Des}(D)$ denotes the set of descents of the diagram $D$.
\end{theorem}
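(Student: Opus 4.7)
The plan is to take $T$-characters in the filtration
\[
 E_\bull^D = \Rop_{n+1}E_\bull^D \tto \Rop_nE_\bull^D \tto \cdots \tto \Rop_1E_\bull^D
\]
and invoke Lemmas~\ref{l.k-full} and \ref{l.k-descent} at each step. By the clearness hypothesis, for each $k\in\{1,\ldots,n\}$ exactly one of two cases occurs: either $D$ is $k$-full, and Lemma~\ref{l.k-full} tells us the $k$-th step is an isomorphism; or $D$ has a descent at $k$, and Lemma~\ref{l.k-descent} identifies the kernel as $K_k \otimes \Rop_{k+1}E_\bull^{s_kD}$, contributing $x_k \cdot \tchar_T(\Rop_{k+1}E_\bull^{s_kD})$ to the jump in character (using $\tchar_T(K_k)=x_k$). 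These two cases cannot both hold, since a descent column has no box in row $k+1$.

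Telescoping additively through the filtration from $k=n$ down to $k=1$ then yields
\[
  \fS_D \;=\; \tchar_T(\Rop_1E_\bull^D) \;+\; \sum_{k\in\mathrm{Des}(D)} x_k \cdot \tchar_T(\Rop_{k+1}E_\bull^{s_kD}).
\]

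To match this with the claimed recursion~\eqref{e.recur}, I would next identify $\tchar_T(\Rop_mE_\bull^{D'}) = \Rop_m(\fS_{D'})$ for any diagram $D'$ and any $m$. As already observed in \S\ref{s.Rk}, the operator $\Rop_m$ on polynomials is defined precisely so that on generators $\Rop_m(x_i) = \tchar_T(\Rop_mE_i/\Rop_mE_{i-1})$. To extend this to all of $\fS_{D'}$, I would appeal to the functoriality of the construction $F_\bull \mapsto F_\bull^{D'}$ from flags to $T$-modules: the character of $F_\bull^{D'}$ depends polynomially on the weights of the successive quotients $F_i/F_{i-1}$, with the polynomial being exactly $\fS_{D'}$. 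Applying this to $F_\bull = \Rop_mE_\bull$, whose quotient weights are $(x_1,\ldots,x_{m-1},0,x_m,\ldots,x_{n-1})$, gives the desired identification. Substituting into the telescoped formula then produces \eqref{e.recur}.

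The main obstacle is the character-substitution identity $\tchar_T(\Rop_mE_\bull^{D'}) = \Rop_m(\fS_{D'})$. While it follows conceptually from functoriality, a direct verification would proceed through the minor description: $F_\bull^{D'}$ is cut out inside $\kk[Z_\bull^F]$ as the span of products of minors $\Delta_\tau$, and the torus weights can be read off column indices in a way that makes the substitution transparent. Once this ingredient is in place, the rest of the argument is routine bookkeeping on the lemmas already established in \S\ref{s.kernel}.
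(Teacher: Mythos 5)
Your proposal is correct and is essentially the paper's own proof: the same filtration $E_\bull^D = \Rop_{n+1}E_\bull^D \tto \cdots \tto \Rop_1E_\bull^D$, additivity of characters along it, and Lemmas~\ref{l.k-full} and \ref{l.k-descent} applied according to whether $D$ is $k$-full or has a descent at $k$. The substitution identity $\tchar_T(\Rop_m E_\bull^{D'}) = \Rop_m(\fS_{D'})$ that you single out as the main remaining ingredient is exactly the step the paper also leaves implicit (resting on the observation at the end of \S\ref{s.Rk} about the quotient characters of $\Rop_kE_\bull$), so your treatment of it is, if anything, more explicit than the paper's.
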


\begin{proof}
The $\Rop$-operators produce a filtration of $E_\bull^D$ by quotients: assuming $D$ has no boxes below row $n$, we have
\[
  E_\bull^D = \Rop_{n+1}E_\bull^D \tto \Rop_nE_\bull^D \tto \cdots \tto \Rop_2E_\bull^D \tto \Rop_1E_\bull^D.
\]
It follows that
\begin{equation}\label{e.recur1}
  \fS_D = \tchar_T\left( \Rop_1E_\bull^D \right) + \sum_{k=1}^n \tchar_T\left( \ker(\Rop_{k+1}E_\bull^D \tto \Rop_kE_\bull^D) \right).
\end{equation}

Since $D$ is clear, it is $k$-full whenever $k$ is not a descent; by Lemma~\ref{l.k-full}, $\ker(\Rop_{k+1}E_\bull^D \tto \Rop_kE_\bull^D)=0$ in this case.  So the RHS of \eqref{e.recur1} ranges over $k\in\mathrm{Des}(D)$.

When $k$ is a descent, Lemma~\ref{l.k-descent} says
\[
  \ker(\Rop_{k+1}E_\bull^D \tto \Rop_kE_\bull^D) \isom \ker(\Rop_{k+1}E_k \tto \Rop_kE_k) \otimes \Rop_{k+1}E_\bull^{s_kD},
\]
and by Lemma~\ref{l.rk-character}, this has character $x_k\cdot \Rop_{k+1}\fS_{s_kD}$, as claimed.
\end{proof}

The formula of Theorem~\ref{t.recur} can be rephrased as
\begin{equation}\label{e.recur2}
 \fS_D = \frac{1}{1-\Rop_1} \sum_{k\in \mathrm{Des(D)}} x_k\cdot \Rop_{k+1}\left( \fS_{s_kD} \right),
\end{equation}
cf.~\cite[(3.1)]{nst2}.  This version expresses $\fS_D$ in terms of lower-degree characters $\fS_{s_kD}$ on the right-hand side.  (Since $\Rop_1$ is a locally nilpotent operator, the action of $(1-\Rop_1)^{-1} = 1+\Rop_1 + \Rop_1^2 + \cdots$ involves only finitely many terms.)

\begin{remark}\label{r.nst-compare}
Comparing \eqref{e.recur} with that of \cite[(3.1)]{nst2}, the indices of the operators $\Rop_k$ are shifted by one.  As explained in \cite[Remark~3.6]{nst2}, the formulas agree when $D=D(w)$ is a Rothe diagram.  In fact, Lemma~\ref{l.k-full} implies that whenever $s_kD$ is $k$-full, we have $\Rop_{k+1}\fS_{s_kD} = \Rop_k\fS_{s_kD}$, so the index shift does not change the formula in this case.  For Rothe diagrams, $s_kD(w)=D(ws_k)$ is $k$-full whenever $k\in \mathrm{Des}(w)$---however, this fails for general $D$.
\end{remark}

The formula \eqref{e.recur2} becomes a recurrence relation determining $\fS_D$ under a further hypothesis.
\begin{definition}\label{d.transparent}
The set of \define{transparent} diagrams is defined inductively as follows:
\begin{itemize}
\item (base) The empty diagram $D=\emptyset$ is transparent.

\item (step) A diagram $D$ is transparent if $D$ is clear and $s_kD$ is transparent for every $k\in \mathrm{Des}(D)$.
\end{itemize}
\end{definition}

\begin{corollary}
If $D$ is transparent, then the character $\fS_D$ is determined by the recurrence relation \eqref{e.recur} together with the fact that $\fS_\emptyset=1$.
\end{corollary}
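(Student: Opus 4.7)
The plan is to proceed by strong induction on the number of cells $|D|$. The base case is the empty diagram $D=\emptyset$, where by hypothesis $\fS_\emptyset=1$. For the inductive step, suppose $D$ is transparent with $|D|\geq 1$. Since transparency requires that $D$ be clear, Theorem~\ref{t.recur} applies, and after moving the $\Rop_1(\fS_D)$ term to the left-hand side we obtain
\[
(1-\Rop_1)\,\fS_D \;=\; \sum_{k\in\mathrm{Des}(D)} x_k\cdot \Rop_{k+1}\bigl(\fS_{s_kD}\bigr).
\]

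Next I would observe that for every $k\in\mathrm{Des}(D)$, the diagram $s_kD$ is transparent by Definition~\ref{d.transparent}, and moreover $|s_kD|=|D|-1$, because $s_k$ removes a single border cell and then permutes two rows. Hence each character $\fS_{s_kD}$ appearing on the right-hand side is uniquely determined by the inductive hypothesis.

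Finally, to extract $\fS_D$ itself from the displayed identity, I would invoke local nilpotence of $\Rop_1$: as an operator on $\kk[x_1,\ldots,x_n]$, it sends $x_1\mapsto 0$ and $x_i\mapsto x_{i-1}$ for $i>1$, so $\Rop_1^N$ annihilates every polynomial of degree less than $N$. Consequently $1-\Rop_1$ is bijective on polynomials, with inverse $\sum_{j\geq 0}\Rop_1^{\,j}$ acting as a finite sum on each input, and this inverse can be applied to the right-hand side to recover $\fS_D$ uniquely. This closes the induction.

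The only real subtlety I anticipate is the well-foundedness of the recursion: one needs the measure $|D|$ (rather than, say, some length or descent-depth) to guarantee termination, since the transparency definition is itself recursive and leaves open the possibility that $k$ remains a descent of $s_kD$, as indicated by the $D=[\{1\},\{2\}]$ example in the excerpt. The cardinality bookkeeping $|s_kD|=|D|-1$ sidesteps this issue cleanly, and the invertibility of $1-\Rop_1$ then promotes existence-uniqueness of the recursion's right-hand side into existence-uniqueness of $\fS_D$.
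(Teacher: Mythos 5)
Your overall route is the one the paper itself indicates as the alternative proof: rewrite \eqref{e.recur} as $(1-\Rop_1)\fS_D=\sum_{k\in\mathrm{Des}(D)}x_k\Rop_{k+1}(\fS_{s_kD})$ (this is \eqref{e.recur2}) and induct on degree, which coincides with your induction on $|D|$ since $\fS_D$ is homogeneous of degree $|D|$ and $|s_kD|=|D|-1$. The bookkeeping that $s_kD$ is transparent and strictly smaller is exactly right, and it is the correct way to handle the possibility that $k$ remains a descent of $s_kD$.

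However, the two statements you use to invert $1-\Rop_1$ are false as written. First, $\Rop_1^N$ does not annihilate every polynomial of degree less than $N$: since $\Rop_1(x_i)=x_{i-1}$ for $i>1$, one has $\Rop_1^2(x_5)=x_3\neq 0$ even though $\deg x_5=1$; the correct statement is that $\Rop_1^n$ annihilates every polynomial in $x_1,\dots,x_n$ with zero constant term (local nilpotence on the augmentation ideal). Second, $1-\Rop_1$ is not bijective on all polynomials: $\Rop_1(1)=1$, so the constants lie in the kernel, and $\sum_{j\geq 0}\Rop_1^j(1)$ is not a finite sum. The fix is minor but should be made explicit: for $D\neq\emptyset$ the right-hand side $g=\sum_k x_k\Rop_{k+1}(\fS_{s_kD})$ and the unknown $\fS_D$ are homogeneous of degree $|D|\geq 1$, hence have no constant term; on such polynomials $\Rop_1$ is locally nilpotent, $1-\Rop_1$ is invertible with inverse $\sum_{j\geq 0}\Rop_1^j$ acting as a finite sum, and therefore $\fS_D=\sum_{j\geq 0}\Rop_1^j(g)$ is uniquely determined. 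With that correction your argument is complete and agrees with the paper's.
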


\begin{proof}
On the RHS of \eqref{e.recur}, the term $\Rop_1(\fS_D)$ involves fewer variables than $\fS_D$, and the terms $\Rop_{k+1}(\fS_{s_kD})$ are of lower degree than $\fS_D$.  So both can be assumed to be known, by induction on degree and number of variables.  (Alternatively, one can use the reformulation \eqref{e.recur2} and induction only on degree.)
\end{proof}

The recurrence relation implies a generalization of \cite[Theorem~1.1]{nst2}, giving a formula for the character of a transparent diagram $D$ in terms of reduced words.

\begin{definition}
Let $\ell$ be the number of boxes in $D$.  We say $(i_1,\ldots,i_\ell)$ is a \define{reduced word} for $D$ if $s_{i_1}\cdots s_{i_\ell}D = \emptyset$.  That is, there is a sequence of diagrams $D=D_\ell, \ldots, D_1, D_0=\emptyset$ such that $i_j \in \mathrm{Des}(D_j)$ and $D_{j-1} = s_{i_j}D_j$.

We write $\mathrm{Red}(D)$ for the set of all reduced words of $D$.
\end{definition}

When $D=D(w)$ is the Rothe diagram of a permutation, a reduced word for $D$ is the same as a reduced word for $w$.

\begin{corollary}\label{c.redword}
Let $D$ be a transparent diagram.  Then
\begin{equation}\label{e.redwordcor}
 \fS_D = \sum_{(i_1,\ldots,i_\ell)\in\mathrm{Red}(D)} \Zop x_{i_\ell} \Rop_{i_\ell+1} \cdots \Zop x_{i_1} \Rop_{i_1+1}(1),
\end{equation}
where $\Zop=1/(1-\Rop_1)$.
\end{corollary}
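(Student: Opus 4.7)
The plan is to induct on $\ell$, the number of boxes in $D$, using the reformulated recurrence \eqref{e.recur2} as the engine.

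For the base case $\ell=0$, the diagram $D$ is empty, the only reduced word is the empty tuple, the right-hand side of \eqref{e.redwordcor} is the empty composition of operators applied to $1$ (hence equals $1$), and $\fS_\emptyset=1$, so both sides agree.

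For the inductive step, I will assume the formula for all transparent diagrams with fewer than $\ell$ boxes and take $D$ transparent with $\ell$ boxes. By Definition~\ref{d.transparent}, $D$ is clear and each $s_kD$ (for $k\in\mathrm{Des}(D)$) is itself transparent with one fewer box, so the inductive hypothesis applies to each $s_kD$. Substituting the resulting formula for $\fS_{s_kD}$ into \eqref{e.recur2}, and using that $\Zop$, multiplication by $x_k$, and $\Rop_{k+1}$ are all linear operators on the polynomial ring, I obtain
\[
\fS_D \;=\; \sum_{k\in\mathrm{Des}(D)}\, \sum_{(i_1,\ldots,i_{\ell-1})\in\mathrm{Red}(s_kD)} \Zop\, x_k\, \Rop_{k+1}\; \Zop\, x_{i_{\ell-1}}\, \Rop_{i_{\ell-1}+1}\cdots \Zop\, x_{i_1}\, \Rop_{i_1+1}(1).
\]
The final step is to reindex this double sum as a single sum, writing $i_\ell := k$: by the recursive definition of reduced word, the pairs $\bigl(k,(i_1,\ldots,i_{\ell-1})\bigr)$ with $k\in\mathrm{Des}(D)$ and $(i_1,\ldots,i_{\ell-1})\in\mathrm{Red}(s_kD)$ are in bijection (via concatenation) with the tuples $(i_1,\ldots,i_\ell)\in\mathrm{Red}(D)$. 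This bijection turns the displayed equation into \eqref{e.redwordcor}.

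There is no real obstacle here; the substantive content of the corollary lies in Theorem~\ref{t.recur} (and hence in Lemmas~\ref{l.k-full} and~\ref{l.k-descent}), with the transparency hypothesis of Definition~\ref{d.transparent} designed precisely so that the recursion can be iterated down to the empty diagram.
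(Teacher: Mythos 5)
Your proof is correct and matches the route the paper intends (and leaves implicit): iterate the reformulation \eqref{e.recur2} of Theorem~\ref{t.recur}, using transparency to apply the inductive hypothesis to each $s_kD$ and the bijection $(i_1,\ldots,i_\ell)\leftrightarrow\bigl(i_\ell,(i_1,\ldots,i_{\ell-1})\bigr)$ between $\mathrm{Red}(D)$ and pairs of a descent with a reduced word of $s_{i_\ell}D$. Nothing further is needed.
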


\begin{remark}
Compared with the formula of \cite[Theorem~1.1]{nst2}, the above formula has the indices of the Bergeron-Sottile operators $\Rop_k$ shifted by one, for the reasons explained in Remark~\ref{r.nst-compare}.  This leaves the formula unchanged when $D=D(w)$, but not for general diagrams.
\end{remark}

In fact, the recurrence relation determines $\fS_D$ for the slightly wider class of {\it translucent} diagrams.
\begin{definition}\label{d.translucent}
A diagram is \define{translucent}\footnote{A {\it clear} diagram is one which does not obstruct the recursive step \eqref{e.recur}; a {\it transparent} diagram is one for which the recursion passes all the way through to the base case $D=\emptyset$; a {\it translucent} diagram is not quite transparent, but lets through enough recursion to reach single-column base cases.} if it belongs to the following inductively defined set:
\begin{itemize}
\item (base) A diagram with a single (possibly empty) column $D=[\ba]$ is translucent.

\item (step) A diagram $D$ with at least two columns is translucent if $D$ is clear and $s_kD$ is translucent for every $k\in \mathrm{Des}(D)$.
\end{itemize}
\end{definition}

\begin{corollary}
If $D$ is translucent, then the character $\fS_D$ is determined by the recurrence relation \eqref{e.recur} together with the fact that
\begin{equation}
  \fS_\ba= \sum_{\bb\leq\ba} x_{b_1} x_{b_2} \cdots x_{b_r},
\end{equation}
where $\bb=\{b_1<b_2<\cdots<b_r\}$.
\end{corollary}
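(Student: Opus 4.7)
The plan is to prove this corollary by strong induction on the total number of boxes in the diagram $D$, following precisely the two-case inductive definition of \emph{translucent}.

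For the base case, suppose $D = [\ba]$ is a single column (possibly empty). Then $E_\bull^D = E_\bull^\ba$ is the flagged column module constructed in Section~\ref{s.flagged}, and from the discussion surrounding Example~\ref{ex.235}, it has a basis of elements $z_{\ba,\bb} = z_{a_1,b_1} \wedge \cdots \wedge z_{a_r,b_r}$ indexed by subsets $\bb = \{b_1 < \cdots < b_r\}$ with $\bb \leq \ba$. Each $z_{a_i,b_i}$ lies in the $x_{b_i}$-weight space of $E_{a_i}$ (since the bases were chosen compatibly with the projections $E_n \tto E_{a_i}$, and $z_{n,b_i}$ has weight $x_{b_i}$). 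Hence $z_{\ba,\bb}$ has $T$-weight $x_{b_1}\cdots x_{b_r}$, and summing over the basis yields $\fS_\ba = \sum_{\bb \leq \ba} x_{b_1}\cdots x_{b_r}$, which is the stated formula. The empty column ($r=0$) gives $\fS_\emptyset = 1$.

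For the inductive step, suppose $D$ is a translucent diagram with at least two columns, and assume the result holds for all translucent diagrams with strictly fewer boxes. By the inductive clause of Definition~\ref{d.translucent}, $D$ is clear, so Theorem~\ref{t.recur} applies and gives the recurrence \eqref{e.recur}, or equivalently the reformulation
\begin{equation*}
  \fS_D = \frac{1}{1-\Rop_1} \sum_{k \in \mathrm{Des}(D)} x_k \cdot \Rop_{k+1}\!\left( \fS_{s_kD} \right).
\end{equation*}
Again by Definition~\ref{d.translucent}, each $s_kD$ (for $k \in \mathrm{Des}(D)$) is translucent, and it has exactly one fewer box than $D$, since $s_kD$ is obtained by deleting the border cell $(k,j)$ and then swapping rows $k$ and $k+1$. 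By the inductive hypothesis, each $\fS_{s_kD}$ is determined by the recurrence and the single-column formula. Since $\Rop_{k+1}$ and $\Rop_1$ are explicit operators on the polynomial ring, and $(1-\Rop_1)^{-1} = 1 + \Rop_1 + \Rop_1^2 + \cdots$ acts as a finite sum on any given polynomial (because $\Rop_1$ is locally nilpotent), this expression determines $\fS_D$ uniquely.

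There is no serious obstacle: the corollary is essentially a bookkeeping consequence of Theorem~\ref{t.recur} combined with the explicit computation of $\fS_\ba$ for a single column. The only point that warrants care is verifying that the inductive structure terminates correctly---namely, that $s_kD$ has strictly fewer boxes (which is immediate from the definition of $s_k$) and that the two clauses of Definition~\ref{d.translucent} exactly cover the recursion, so that every translucent $D$ is reduced in finitely many steps to a single-column base case whose character is given by the closed formula above.
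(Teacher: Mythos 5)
Your proof is correct and follows essentially the same route as the paper: the paper's argument (given explicitly for the transparent case and intended identically here) is induction on degree using the recurrence, with the reformulation $(1-\Rop_1)^{-1}$ handling the $\Rop_1(\fS_D)$ term exactly as you do, and your induction on the number of boxes is the same as induction on degree since $\fS_D$ is homogeneous of degree equal to the number of boxes. Your explicit verification of the single-column character from the basis $\{z_{\ba,\bb}\}_{\bb\leq\ba}$ is a harmless extra, since the corollary takes that formula as given.
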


(The recurrence can be repackaged into a sum over appropriately defined ``reduced words'' for translucent diagrams, as in Corollary~\ref{c.redword}, but the statement is less elegant and we leave it to the interested reader.)

\begin{example}\label{ex.tr-diag}
The diagram $D$ shown below, is translucent, but not transparent.  
\begin{center}
\pspicture(0,0)(30,30)

\emptygraybox(0,10)

\blankbox(0,20) 
\blankbox(0,0)

\emptygraybox(10,20)
\emptygraybox(10,0)

\blankbox(10,10)


\emptygraybox(20,10)
\emptygraybox(20,0)

\blankbox(20,20)  

\endpspicture
\end{center}
Using the recursion, one computes $\fS_D = x_1^3 x_2 + x_1^3 x_3 + x_1^2 x_2^2 + x_1^2 x_2 x_3$.  (Alternatively, one can observe $D=D' \cup D''$, where $D' = \big[ \{1,3\},\, \{1\}\big]$ and $D''=\big[\{2\}\big]$ occupy disjoint rows, so $\fS_D = \fS_{D'}\cdot\fS_{D''} = (x_1^2 x_2 + x_1^2 x_3)(x_1+x_2)$.)
\end{example}

\begin{remark}
The translucent diagram of the previous example is not \%-avoiding, nor is any permutation of its columns.  On the other hand, the diagram of Remark~\ref{r.cubic} is \%-avoiding, but not translucent.
\end{remark}

\begin{remark}
As observed in Example~\ref{ex.repeat}, when $D$ is clear, so is $m\cdot D$.  Repeating columns does not preserve the properties of being transparent or translucent, however.  For example $D=D(2\,1\,4\,5\,3)$ is transparent, but $2\cdot D$ is not even translucent.
\end{remark}

\begin{remark}
Similarly, the $\mathsf{Boxcomp}$ operation (studied extensively in \cite{rs}) fails to preserve the class of translucent diagrams.  For instance, considering the diagram $D$ of Example~\ref{ex.tr-diag} inside the $3\times 3$ box, its complement
\begin{center}
\pspicture(0,0)(30,30)

\emptygraybox(0,20)
\emptygraybox(0,0)

\blankbox(0,10)


\emptygraybox(10,10)

\blankbox(10,20)  
\blankbox(10,0)

\emptygraybox(20,20)

\blankbox(20,10)
\blankbox(20,0)  

\endpspicture
\end{center}
is not translucent.
\end{remark}

\begin{remark}
As operators on transparent diagrams, the $s_k$ satisfy $s_k s_{k+1} s_k = s_{k+1} s_k s_{k+1}$ and $s_i s_j = s_j s_i$ when $|i-j|>1$.  So they determine an action of the braid monoid $B_+$.  In particular, one can perform commutations and braid moves on a reduced word for $D$ to produce another reduced word.  However, in contrast to the situation with reduced words for permutations, the commutation and braid moves do not connect every pair of reduced words for a diagram $D$.  For instance, if $D = \big[ \{1\}, \{2\} \big]$, its reduced words are $(1,\,1)$ and $(1,\,2)$.
\end{remark}

\section{Examples and applications}\label{s.apps}

\subsection{Schubert polynomials and Schubert modules}\label{ss.schub}

The \define{Schubert polynomial} $\fS_w$ associated to a permutation is usually defined via descending induction on the length of $w$; see the next subsection.  However, thanks to \cite[Theorem~1.1 and (3.1)]{nst2}, we have an alternative characterization via an increasing recursive formula: $\fS_{\mathrm{id}}=1$, and
\begin{equation}\label{e.nst-schub}
  \fS_w = \Rop_1(\fS_{w}) + \sum_{k\in\mathrm{Des}(w)} x_k \cdot \Rop_{k+1}( \fS_{ws_k} ).
\end{equation}
For now, we may take this as a definition of $\fS_w$.

On the other hand, the Rothe diagram $D(w)$ of a permutation $w$ is transparent, by induction on the length of $w$: As explained in Example~\ref{ex.rothe}, $D(w)$ is clear.  And one checks that $s_kD(w) = D(ws_k)$---this is the motivating case for the definition of $s_kD$---so it is transparent by the inductive assumption.  It follows that the characters $\fS_{D(w)}$ are characterized uniquely by an identical recursive formula (cf.~\eqref{e.recur} above):
\begin{equation}
  \fS_{D(w)} = \Rop_1(\fS_{D(w)}) + \sum_{k\in\mathrm{Des}(D(w))} x_k \cdot \Rop_{k+1}( \fS_{D(ws_k)} ).
\end{equation}
(As noted before, the descent sets of $D(w)$ and of $w$ coincide---$\mathrm{Des}(D(w))=\mathrm{Des}(w)$---so this really is the same formula.)  Thus we recover the theorem of Kra\'skiewicz and Pragacz:
\begin{corollary}
For a Rothe diagram $D(w)$, the character of the flagged module $E_\bull^{D(w)}$ is equal to the Schubert polynomial $\fS_w$.
\end{corollary}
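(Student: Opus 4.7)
The plan is to deduce the corollary directly from Theorem~\ref{t.recur} and the uniqueness of the Nadeau–Spink–Tewari recursion for Schubert polynomials, arguing by induction on the length $\ell(w)$. The base case is $w=\mathrm{id}$, where the Rothe diagram is empty and $E_\bull^\emptyset = \kk$, so both $\fS_{D(\mathrm{id})}$ and $\fS_{\mathrm{id}}$ equal $1$.

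For the inductive step, I first need to confirm that $D(w)$ falls under the hypothesis of Theorem~\ref{t.recur}, which is already noted in Example~\ref{ex.rothe}: $D(w)$ is clear, and its descent set coincides with $\mathrm{Des}(w)$. The main combinatorial point to verify is that for each $k\in\mathrm{Des}(w)$, the operation $s_k$ on diagrams (delete the border cell and swap rows $k$ and $k+1$) sends $D(w)$ to $D(ws_k)$. Taking $j=w(k+1)$ as the column witnessing the descent, the box $(k,j)$ is the unique border cell; deleting it and swapping rows $k,k+1$ produces precisely the Rothe diagram of $ws_k$, because passing from $w$ to $ws_k$ swaps the values $w(k),w(k+1)$ in the permutation matrix, which in turn swaps rows $k$ and $k+1$ of the Rothe diagram except at column $w(k+1)$, where a single box is removed.

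With this identification, Theorem~\ref{t.recur} applied to $D=D(w)$ yields
\begin{equation*}
  \fS_{D(w)} = \Rop_1(\fS_{D(w)}) + \sum_{k\in\mathrm{Des}(w)} x_k\cdot \Rop_{k+1}\left(\fS_{D(ws_k)}\right).
\end{equation*}
By the inductive hypothesis applied to each $ws_k$ (which has length $\ell(w)-1$), the terms on the right satisfy $\fS_{D(ws_k)} = \fS_{ws_k}$. Consequently $\fS_{D(w)}$ satisfies exactly the Nadeau–Spink–Tewari recursion \eqref{e.nst-schub} that characterizes $\fS_w$. Since the recursion \eqref{e.nst-schub} determines $\fS_w$ uniquely (the term $\Rop_1\fS_{D(w)}$ involves strictly fewer variables and the other terms are lower degree, so one can solve iteratively, exactly as in the proof of the corollary following Theorem~\ref{t.recur}), we conclude $\fS_{D(w)}=\fS_w$, completing the induction.

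The main obstacle is really just the combinatorial identification $s_k D(w) = D(ws_k)$; everything else is machinery already assembled in the paper. Once this identification is in hand, the corollary is immediate from comparing two recursions with the same base case. I would also note in passing that the transparency of $D(w)$ (in the sense of Definition~\ref{d.transparent}) follows from the same induction, since the inductive step shows every $s_k$-child of $D(w)$ is again a Rothe diagram, hence transparent by the inductive hypothesis.
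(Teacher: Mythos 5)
Your proposal is correct and follows essentially the same route as the paper: both rest on the facts that $D(w)$ is clear with $\mathrm{Des}(D(w))=\mathrm{Des}(w)$ and that $s_kD(w)=D(ws_k)$, and then compare the recursion of Theorem~\ref{t.recur} with the Nadeau--Spink--Tewari recursion \eqref{e.nst-schub}, using its uniqueness. The only cosmetic difference is that you run an explicit induction on $\ell(w)$, whereas the paper packages the same induction as the statement that $D(w)$ is transparent and then invokes the uniqueness corollary for transparent diagrams.
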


\begin{example}
The diagram $D = \big[\{2,3\},\,\{2,3,5\},\,\{3\} \big]$ of our running example is equal to $D(1\,4\,6\,2\,5\,3)$, with empty columns removed.  So we have
\begin{align*}
\fS_D = \fS_{1\,4\,6\,2\,5\,3} &= x_1^3 x_2^2 x_3 + x_1^3 x_2^2 x_4 + x_1^3 x_2^2 x_5 + x_1^3 x_2 x_3^2 + x_1^3 x_2 x_3 x_4 + x_1^3 x_2 x_3 x_5 \\
& \quad + x_1^3 x_3^2 x_4 + x_1^3 x_3^2 x_5 + x_1^2 x_2^3 x_3 + x_1^2 x_2^3 x_4 + x_1^2 x_2^3 x_5 + 2 x_1^2 x_2^2 x_3^2 \\
& \quad + 2 x_1^2 x_2^2 x_3 x_4 + 2 x_1^2 x_2^2 x_3 x_5 + x_1^2 x_2 x_3^3 + 2 x_1^2 x_2 x_3^2 x_4 + 2 x_1^2 x_2 x_3^2 x_5 \\
& \quad + x_1^2 x_3^3 x_4 + x_1^2 x_3^3 x_5 + x_1 x_2^3 x_3^2 + x_1 x_2^3 x_3 x_4 + x_1 x_2^3 x_3 x_5 + x_1 x_2^2 x_3^3 \\
& \quad + 2 x_1 x_2^2 x_3^2 x_4 + 2 x_1 x_2^2 x_3^2 x_5 + x_1 x_2 x_3^3 x_4 + x_1 x_2 x_3^3 x_5 + x_2^3 x_3^2 x_4 \\ 
& \quad + x_2^3 x_3^2 x_5 + x_2^2 x_3^3 x_4 + x_2^2 x_3^3 x_5.
\end{align*}
Adding up the coefficients, we find that $\dim E_\bull^D = 38$.
\end{example}

\subsection{Divided difference operators}

Schubert polynomials are typically defined via a different recursion, but one which turns out to be closely related.  The \define{divided difference operator} $\partial_k$ is defined on polynomials by
\[
  \partial_k f = \frac{f - s_kf}{x_k-x_{k+1}}=\frac{f(\ldots,x_k,x_{k+1},\ldots) - f(\ldots,x_{k+1},x_k,\ldots)}{x_k-x_{k+1}}.
\]
The Schubert polynomials $\fS_w$ for $w\in S_n$ are then defined inductively by
\begin{equation*}
  \fS_{[n,n-1,\ldots,1]} = x_1^{n-1} x_2^{n-2} \cdots x_{n-1} 
\end{equation*}
and
\begin{equation}\label{e.schub-dd}
   \partial_k \fS_w = \begin{cases} \fS_{ws_k} &\text{if } k\in \mathrm{Des}(w); \\ 0 &\text{otherwise.}\end{cases} 
\end{equation}
As we have just seen, $\fS_D=\fS_w$ when $D=D(w)$ is the Rothe diagram of a permutation, so in this case we have
\begin{equation}\label{e.schub-dd2}
  \partial_k \fS_D = \begin{cases} \fS_{s_kD} &\text{if } k\in \mathrm{Des}(D); \\ 0 &\text{otherwise.}\end{cases}
\end{equation}

The naive analogue of \eqref{e.schub-dd2} is false for more general diagrams---it is easy to find small counterexamples, e.g., for $D=\big[\{1\},\{2\}\big]$.  It fails even for multiples of Rothe diagrams, $D=m\cdot D(w)$: for $w=[2,1,4,5,3]$ and $m=2$, we have $\partial_1\fS_D \neq \fS_{s_1D}$.

On the other hand, consider the \define{trimming operator} $\Top_k$, introduced in \cite{nst} and defined by
\[
  \Top_k f = \frac{\Rop_{k+1}f - \Rop_k f}{x_k}.
\]
Lemmas~\ref{l.rk-character}, \ref{l.k-full}, and \ref{l.k-descent} imply the following:
\begin{corollary}\label{c.top}
For any diagram $D$, we have
\[
  \Top_k\fS_D = \begin{cases} \Rop_{k+1}\fS_{s_kD} & \text{if } k\in\mathrm{Des}(D); \\ 0 &\text{if } D\text{ is $k$-full.} \end{cases}
\]
\end{corollary}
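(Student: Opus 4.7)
The plan is to derive this corollary directly from the short exact sequence
\[
  0 \to N_k^D \to \Rop_{k+1}E_\bull^D \to \Rop_kE_\bull^D \to 0
\]
by passing to characters, and then invoking Lemmas~\ref{l.k-full} and \ref{l.k-descent} to identify $N_k^D$ in each of the two cases. Since $\fS_D = \tchar_T(E_\bull^D)$ and the Bergeron-Sottile operators on characters correspond precisely to the operators $\Rop_k$ on flags (so $\Rop_{k+1}\fS_D = \tchar_T(\Rop_{k+1}E_\bull^D)$, and likewise for $\Rop_k$), additivity of character under short exact sequences gives
\[
  \Rop_{k+1}\fS_D - \Rop_k\fS_D = \tchar_T(N_k^D).
\]

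If $D$ is $k$-full, Lemma~\ref{l.k-full} says $N_k^D = 0$, so the right-hand side is zero and therefore $\Top_k\fS_D = 0$. If instead $k \in \mathrm{Des}(D)$, Lemma~\ref{l.k-descent} identifies $N_k^D \isom K_k \otimes \Rop_{k+1}E_\bull^{s_kD}$, where $K_k = \ker(E_k \tto E_{k-1})$ is the one-dimensional $T$-weight space of character $x_k$. Thus
\[
  \tchar_T(N_k^D) = x_k \cdot \tchar_T(\Rop_{k+1}E_\bull^{s_kD}) = x_k \cdot \Rop_{k+1}\fS_{s_kD},
\]
so dividing by $x_k$ yields $\Top_k\fS_D = \Rop_{k+1}\fS_{s_kD}$, as claimed.

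There is no real obstacle here; the work has all been done in establishing Lemmas~\ref{l.k-full} and \ref{l.k-descent}, and the only minor point to verify is that the compatibility $\Rop_k\fS_D = \tchar_T(\Rop_kE_\bull^D)$ holds in our setup---this follows from the identification $\Rop_k(x_i) = \tchar_T(\ker(\Rop_kE_i \tto \Rop_kE_{i-1}))$ noted at the end of Section~\ref{s.Rk}, together with the fact that character is multiplicative on tensor products and additive on short exact sequences. One should also remark that dividing by $x_k$ in the second case is legitimate because $N_k^D$ is literally $K_k$ tensored with another module, so $x_k$ appears as an honest factor rather than merely dividing the difference $\Rop_{k+1}\fS_D - \Rop_k\fS_D$ formally.
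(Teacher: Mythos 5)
Your argument is exactly the one the paper intends: Corollary~\ref{c.top} is stated there as an immediate consequence of Lemmas~\ref{l.k-full} and \ref{l.k-descent}, obtained by taking $T$-characters of the exact sequence $0 \to N_k^D \to \Rop_{k+1}E_\bull^D \to \Rop_kE_\bull^D \to 0$, just as in the proof of Theorem~\ref{t.recur}. The compatibility $\tchar_T(\Rop_j E_\bull^{D'}) = \Rop_j\fS_{D'}$ that you single out is likewise the point the paper relies on (implicitly, via the remark at the end of \S\ref{s.Rk}), so your write-up is correct and follows the same route.
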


When $D$ is clear, the corollary gives a formula for $\Top_k\fS_D$ for all $k$.  Since $\Top_k = \Rop_k\partial_k = \Rop_{k+1}\partial_k$ this says that \eqref{e.schub-dd2} holds after applying $\Rop_{k+1}$ to both sides.

\subsection{Graded algebras}

Given a diagram $D$ and a free $\kk$-module $E$, consider the graded module
\begin{equation}
 \cR(D,E) = \bigoplus_{m\geq 0} E^{mD}.
\end{equation}
(Here, as before, $mD$ is the diagram in which each column of $D$ repeats $m$ times.)  This is a graded subalgebra of $\kk[Z]$.

Similarly, given a flagged, free $\kk$-module $E_\bull$, we have
\begin{equation}
 \cR(D,E_\bull) = \bigoplus_{m\geq 0} E_\bull^{mD},
\end{equation}
a graded subalgebra of the polynomial ring $\kk[Z_\bull]$.  The homomorphisms $\Rop_{k+1}E_\bull^{mD} \tto \Rop_{k}E_\bull^{mD}$ determine graded ring homomorphisms
\begin{equation}\label{e.cr-filtration}
  \cR(D,E_\bull) \tto \cR(D,\Rop_nE_\bull) \tto \cdots \tto \cR(D,\Rop_2E_\bull) \tto \cR(D,\Rop_1E_\bull) .
\end{equation}

\begin{corollary}
If $D$ is clear, the filtration \eqref{e.cr-filtration} has associated graded
\[
  \mathrm{gr} \cR(D,E_\bull) = \cR(D,\Rop_1E_\bull)\oplus\bigoplus_{k\in\mathrm{Des}(D)} K_k \otimes s_k\cR(D,\Rop_{k+1}E_\bull),
\]
where $K_k = \ker(E_{k+1} \to E_k)$ as above, and $s_k\cR(D,E_\bull) = \bigoplus_{m\geq0} E_\bull^{s_k mD}$.
\end{corollary}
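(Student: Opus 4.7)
The plan is to decompose the associated graded in each polynomial degree $m$ separately, using the kernel lemmas of \S\ref{s.kernel}, and then reassemble by summing over all $m \geq 0$. The filtration \eqref{e.cr-filtration} restricts in each degree $m$ to the filtration
\[
  E_\bull^{mD} \tto \Rop_n E_\bull^{mD} \tto \cdots \tto \Rop_1 E_\bull^{mD},
\]
so its associated graded in degree $m$ consists of the successive kernels $N_k^{mD} = \ker(\Rop_{k+1}E_\bull^{mD} \tto \Rop_k E_\bull^{mD})$ together with the bottom quotient $\Rop_1 E_\bull^{mD}$.

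Before applying the lemmas, I would verify that $mD$ inherits the necessary combinatorial structure from $D$. That $mD$ is clear whenever $D$ is was already noted in Example~\ref{ex.repeat}. For the descent structure: the definition of a descent at $k$ allows reordering of columns and refers only to which distinct columns appear, so duplicating columns neither creates nor destroys descents, giving $\mathrm{Des}(mD) = \mathrm{Des}(D)$ for all $m \geq 1$. With this in hand, Lemma~\ref{l.k-full} gives $N_k^{mD} = 0$ whenever $k \notin \mathrm{Des}(D)$, while Lemma~\ref{l.k-descent} gives $N_k^{mD} \isom K_k \otimes \Rop_{k+1} E_\bull^{s_k mD}$ for $k \in \mathrm{Des}(D)$.

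Summing across all degrees $m \geq 0$ then yields
\[
  \mathrm{gr}\,\cR(D,E_\bull) = \left(\bigoplus_{m\geq 0}\Rop_1 E_\bull^{mD}\right) \oplus \bigoplus_{k \in \mathrm{Des}(D)} \left(K_k \otimes \bigoplus_{m \geq 0} \Rop_{k+1}E_\bull^{s_k mD}\right),
\]
and recognizing the first summand as $\cR(D,\Rop_1 E_\bull)$ and each inner sum as $s_k\cR(D,\Rop_{k+1}E_\bull)$ (by the definition announced in the statement) completes the identification. The only step requiring genuine care is the stability $\mathrm{Des}(mD) = \mathrm{Des}(D)$, since without it we could not use a uniform index set in the outer sum; everything else is a degree-wise application of the two kernel lemmas combined with straightforward bookkeeping.
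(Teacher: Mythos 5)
Your proof is correct and is essentially the argument the paper intends: the corollary is stated as an immediate consequence of Lemmas~\ref{l.k-full} and~\ref{l.k-descent} applied degreewise to the modules $E_\bull^{mD}$, exactly as you do. Your extra check that $mD$ is clear with $\mathrm{Des}(mD)=\mathrm{Des}(D)$ is the right point to verify (the paper only records the clearness of $mD$ in Example~\ref{ex.repeat} and leaves the descent stability implicit), and your verification of it is sound.
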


The alegbra $\mathrm{gr} \cR(D,E_\bull)$ retains its grading by $m$, and with respect to this grading, the ring $\cR(D,\Rop_1E_\bull)$ is a graded subalgebra (as well as a quotient).  That is, passing to the associated graded produces a canonical section of the projection $\cR(D,E_\bull) \tto \cR(D,\Rop_1E_\bull)$.

The projective variety $\Proj \cR(D,\Rop_kE_\bull)$ can be identified with Magyar's {\it configuration variety}, and the above discussion determines a canonical degeneration.  Describing these degenerations in greater detail should be an interesting project.

\subsection{Positivity}

Schubert polynomials $\fS_w(x)$ form a $\Z$-linear basis of the polynomial ring, and one often wants to know about the Schubert expansion of a given polynomial $f(x)$---especially when the coefficients in this expansion are nonnegative.  In our context, where $f=\fS_D$ is the character of a flagged Schur module $E_\bull^D$, this question was investigated by Watanabe \cite{watanabe-positive}.

Given a diagram $D$, we have
\begin{equation}\label{e.cwD}
  \fS_D = \sum_w c^w_D\, \fS_w,
\end{equation}
where the sum is over all permutations, and the coefficients $c^w_D$ are integers.  Even for translucent and $\%$-avoiding diagrams, $c^w_D$ may be negative.  For example, $D = \big[ \{1,3\}\big]$ is both translucent and $\%$-avoiding, and $\fS_D = -\fS_{3\,1\,2} + \fS_{2\, 1\, 4\, 3}$.

On the other hand, extensive computer experimentation suggests the following:
\begin{conjecture}
When $D$ is transparent, the coefficients $c^w_D$ defined by \eqref{e.cwD} are nonnegative.
\end{conjecture}

For example, consider the following diagram $D$:
\begin{center}
\pspicture(0,0)(50,40)

\blankbox(0,30)  
\blankbox(0,20)  
\blankbox(0,10)  
\blankbox(0,0)   

\emptygraybox(10,30)  
\emptygraybox(10,20)  
\emptygraybox(10,10)  
\blankbox(10,0)      

\emptygraybox(20,30)  
\blankbox(20,20)      
\emptygraybox(20,10)  
\emptygraybox(20,0)   

\emptygraybox(30,30)  
\emptygraybox(30,20)  
\blankbox(30,10)      
\emptygraybox(30,0)   

\emptygraybox(40,30)  
\blankbox(40,20)      
\blankbox(40,10)      
\blankbox(40,0)       

\endpspicture
\end{center}
Using Julia (see \cite{schubmods}), one can quickly check that $D$ is transparent, and $E_\bull^D$ has character
\[
 \fS_D = \fS_{2\, 5\, 7\, 4\, 1\, 3\, 6} + \fS_{2\, 7\, 4\, 5\, 1\, 3\, 6} + \fS_{2\, 6\, 4\, 7\, 1\, 3\, 5} + \fS_{2\, 5\, 6\, 7\, 1\, 3\, 4}.
\]

\appendix

\section{Partial flags}\label{app.parabolic}

One can consider more general partial flags $E_\bull$, where $\rk E_i = d_i$ for some increasing sequence $\bd$ of integers, $0=d_0<d_1<\cdots<d_n=N$.  Here we will briefly review the changes required to accommodate this setup.  As we will see, most of the constructions carry over easily, but the stronger results---for example, the simple characterization of $\fS_D$ via the recursion \eqref{e.recur}---do not.

\subsection{Flagged modules}

The constructions of $E_\bull^\ba$, $TE_\bull^D$, and $E_\bull^D$ go much as before.

Assume $E_\bull$ is free, with $\rk E_i = d_i$, and fix bases $z_{i,j}$ for $E_i$ so that the projection $E_i \to E_{i-1}$ is given by
\[
  z_{i,j} \mapsto \begin{cases} z_{i-1,j} & \text{if }j\leq d_{i-1}; \\ 0 & \text{otherwise.} \end{cases}
\]
This gives us the notion of a \define{standard flag}.

A basis for $E_\bull^\ba$ is provided by $z_{\ba,\bb}$ for $\bb\leq\ba$ as before, but now we redefine this by
\[
  \bb \leq \ba \quad \text{ if } \quad b_i \leq d_{a_i} \text{ for all }i.
\]
A column-strict flagged filling $\tau$ of a diagram $D$ is one where the entries in row $i$ are bounded above by $d_i$.  As before, $TE_\bull^D$ has a basis $z_\tau$ ranging over such fillings.

The $D$-exchange relations can be determined by mapping to a polynomial ring in variables
\begin{align*}
&  z_{11}, z_{12}, \ldots, z_{1,d_1}, \\
&  z_{21}, z_{22}, \ldots, z_{2,d_1}, \ldots, z_{2,d_2},\\
&  \vdots \\
&  z_{n1}, z_{n2}, \ldots, z_{n,d_1}, \ldots, z_{n,d_2}, \ldots, z_{n,d_n}.
\end{align*}
Let $Z_\bull = (z_{ij})_{d_i\leq j}$ be the $n\times d_n$ block lower-triangular matrix of the form suggested above.  The homomorphism $TE_\bull^D \to \kk[Z_\bull]$ is defined as before, and one has a pushout diagram analogous to that of \eqref{e.pushout-flag}.

Group actions are also as before, except now the group acting is the parabolic subgroup $P\subset GL(E)$ preserving $E_\bull$.  In our chosen basis, $P$ is block lower-triangular, with block sizes $c_i = d_i-d_{i-1}$.

\subsection{Bergeron-Sottile operators}

From now on we assume $\kk$ is a field, and $E_\bull$ is standard.  We use the same definition for operators on flags:
\[
  \Rop^\bd_kE_i = \begin{cases} E_i & \text{if } i< k; \\ E_{i-1} & \text{if } i\geq k. \end{cases}
\]
The corresponding action on polynomial rings is different:
\[
 \Rop^\bd_k(x_i)=\begin{cases} x_i &\text{if }i<d_k, \\ 0 &\text{if }d_k\leq i<d_{k+1}, \\ x_{i-c_{k+1}} &\text{if } i\geq d_{k+1}. \end{cases}
\]
(Recall that $c_k = d_k-d_{k-1}$.)  When the differences $c_k$ are all equal, say $c_k=m$ for all $m$, we have $\Rop_k^\bd = \Rop_k^m$, cf.~\cite{nst,nst2}.

The analogous projections of polynomial rings $\kk[Z_{k+1}] \tto \kk[Z_k]$ set $z_{k,j}=0$ for $d_{k-1}< j\leq d_k$.  For instance, with $\bd = \{1,2,5,6,7,\ldots\}$ and $k=3$, we have
\begin{align}
  \kk[Z_{4}] & \to \kk[Z_3] \label{e.z4to3} \\
{\left(\begin{array}{cccccccc}
z_{11}  \\ 
z_{21} & z_{22}  \\ 
z_{31} & z_{32} & \bm{z_{33}} & \bm{z_{34}} & \bm{z_{35}}  \\ 
z_{41} & z_{42} & z_{43} & z_{44} & z_{45} & 0 \\ 
z_{51} & z_{52} & z_{53} & z_{54} & z_{55} & z_{56} & 0  \\ 
\vdots &  &  &  & & &  & \ddots 
\end{array}\right)} & \to
{\left(\begin{array}{cccccccc}
z_{11}  \\ 
z_{21} & z_{22}  \\ 
z_{31} & z_{32} & \bm{0} & \bm{0} & \bm{0}  \\ 
z_{41} & z_{42} & z_{43} & z_{44} & z_{45} & 0 \\ 
z_{51} & z_{52} & z_{53} & z_{54} & z_{55} & z_{56} & 0  \\ 
\vdots &  &  &  & & &  & \ddots 
\end{array}\right)}. \nonumber
\end{align}

We define $\bb\leq_k \ba$ analogously to before, to mean $b_i\leq d_{a_i}$ for all $i$, and $b_i\leq d_{a_i-1}$ whenever $a_i\geq k$.  So $\Rop_k^\bd E_\bull^\ba$ has a basis of $z_{\ba,\bb}$ for $\bb\leq_k \ba$.

\subsection{Kernels of Bergeron-Sottile operators}

In our setting of partial flags indexed by $\bd$, there are no simple analogues of the recursive formulas from \S\ref{s.recursion}.  This is because the kernels computed in Lemma~\ref{l.k-full} and \ref{l.k-descent} become more complicated when $d_i\neq i$.

For single-column modules, the obvious analogue of Lemma~\ref{l.Na} holds, for the same reason:
\begin{lemma}
The kernel $N^{\ba}_k = \ker( \Rop^\bd_{k+1}E_\bull^\ba \tto \Rop^\bd_k E_\bull^\ba )$ is generated by $z_{\ba,\bb}$ such that $\bb \leq_{k+1} \ba$, and for some $i$ we have $a_i=k$ and $d_{k-1}<b_i\leq d_k$.  (That is, $\bb\not\leq_{k}\ba$.)
\end{lemma}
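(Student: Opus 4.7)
The plan is to mimic the argument for Lemma~\ref{l.Na} in the full-flag setting, substituting the new definition of $\bb\leq_k\ba$. First, I would record the bases on each side: by the discussion immediately preceding the statement, $\Rop^\bd_{k+1}E_\bull^\ba$ has a basis consisting of the vectors $z_{\ba,\bb}$ with $\bb\leq_{k+1}\ba$, while $\Rop^\bd_{k}E_\bull^\ba$ has a basis consisting of the $z_{\ba,\bb}$ with $\bb\leq_k\ba$. Under the projection, a basis element $z_{\ba,\bb}$ of the source is sent to $z_{\ba,\bb}$ if $\bb\leq_k\ba$ and to $0$ otherwise, exactly as in the full-flag case---this is visible from the matrix picture in \eqref{e.z4to3}, which kills $z_{kj}$ for $d_{k-1}<j\leq d_k$ and leaves all other entries of $Z_{k+1}$ untouched, so a minor $\Delta_{\ba,\bb}(Z_{k+1})$ either survives verbatim or vanishes identically, according to whether its diagonal passes through the newly-zeroed block.

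Since the map is diagonal in the specified basis, the kernel $N^\ba_k$ has a basis given by those $z_{\ba,\bb}$ with $\bb\leq_{k+1}\ba$ but $\bb\not\leq_k\ba$. The second step is to translate this difference into the explicit condition in the statement. Unwinding definitions: $\bb\leq_{k+1}\ba$ forces $b_i\leq d_{a_i}$ for all $i$ and $b_i\leq d_{a_i-1}$ whenever $a_i\geq k+1$, while $\bb\leq_k\ba$ demands the stronger inequality $b_i\leq d_{a_i-1}$ whenever $a_i\geq k$. The only way the former can hold and the latter fail is if some index $i$ has $a_i=k$ and $b_i>d_{a_i-1}=d_{k-1}$; combined with $b_i\leq d_{a_i}=d_k$, this is exactly the condition stated.

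The hard part---really the only part needing care---is this bookkeeping with the two inequalities. One must keep in mind that while $\leq_{k+1}$ relaxes the constraint at row index $a_i=k$ (allowing $b_i$ up to $d_k$), the ordinary condition $\bb\leq\ba$ still caps $b_i$ at $d_{a_i}=d_k$, which is why the range collapses to $d_{k-1}<b_i\leq d_k$ and no contribution can come from rows with $a_i<k$ or $a_i>k$. With that verified, the proof is complete, and it is worth emphasizing that nothing beyond the elementary description of the projection in terms of bases is needed---no invariant theory or exchange relations enter, since we are in the single-column case.
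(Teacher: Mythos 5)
Your proof is correct and matches the paper's: the paper proves this lemma simply by noting it holds ``for the same reason'' as Lemma~\ref{l.Na}, namely that the projection acts diagonally on the standard basis, sending $z_{\ba,\bb}$ to itself when $\bb\leq_k\ba$ and to zero otherwise, so the kernel is spanned by the basis elements with $\bb\leq_{k+1}\ba$ but $\bb\not\leq_k\ba$. Your unwinding of the two conditions into $a_i=k$ and $d_{k-1}<b_i\leq d_k$ is exactly the intended bookkeeping.
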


Here we see the first contrast to the case $d_i=i$: the projection $\Rop^\bd_{k+1}\tto \Rop^\bd_k$ need not be an isomorphism for $k$-full diagrams.  Let $K_k = \ker(\Rop^\bd_{k+1}E_k \to \Rop^\bd_kE_k) = \ker(E_k\tto E_{k-1})$.

\begin{lemma}
Let $N^{\ba}_k = \ker( \Rop^\bd_{k+1}E_\bull^\ba \tto \Rop^\bd_k E_\bull^\ba )$ as above.
\begin{enumerate}[itemsep=5pt]
\item If $k\not\in\ba$, then $N_k^{\ba}=0$, that is, $\Rop^\bd_{k+1}E_\bull^\ba \tto \Rop^\bd_kE_\bull^\ba$ is an isomorphism.

\item If $\ba \supseteq \{k,k+1\}$, then
\[
  N_k^{\ba} \isom \exterior^2 K_k\otimes \Rop^\bd_{k+1}E_\bull^{\ba'},
\]
where $\ba'=\ba\setminus\{k,k+1\}$.

\item If $\ba\cap\{k,k+1\}=\{k\}$, then
\[
  N_k^{\ba} \isom K_k\otimes \Rop^\bd_{k+1}E_\bull^{\ba'},
\]
where $\ba'=\ba\setminus\{k\}$.
\end{enumerate}
\end{lemma}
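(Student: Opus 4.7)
The plan is to invoke the preceding lemma, which identifies $N_k^{\ba}$ as the span of the standard basis elements $z_{\ba,\bb}$ for which $\bb \leq_{k+1} \ba$ and there exists some $i$ with $a_i = k$ and $d_{k-1} < b_i \leq d_k$. Case~(i) is then immediate: if $k \notin \ba$, no index $i$ satisfies $a_i = k$, so the spanning set is empty and $N_k^{\ba} = 0$. For the remaining cases, let $i$ be the unique index with $a_i = k$.

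In Case~(ii), $a_{i+1} = k+1$. The constraint $\bb \leq_{k+1} \ba$ forces $b_{i+1} \leq d_{a_{i+1}-1} = d_k$, column-strictness gives $b_i < b_{i+1}$, and the generating condition requires $d_{k-1} < b_i$. Hence the pair $(b_i, b_{i+1})$ ranges over strictly increasing pairs in $\{d_{k-1}+1,\ldots,d_k\}$, which indexes a basis of $\exterior^2 K_k$. For the remaining indices $j \notin \{i,i+1\}$, the conditions on $b_j$ coincide with those defining basis elements of $\Rop^{\bd}_{k+1} E_\bull^{\ba'}$ for $\ba' = \ba \setminus \{k, k+1\}$. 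The wedge-insertion homomorphism
\[
  \exterior^2 K_k \otimes \Rop^{\bd}_{k+1} E_\bull^{\ba'} \;\to\; \Rop^{\bd}_{k+1} E_\bull^{\ba},
\]
which places $u \wedge v$ in rows $i, i+1$, is $P$-equivariant because $K_k$ is a $P$-submodule of $E_k$; the basis count shows it restricts to an isomorphism onto $N_k^{\ba}$. Case~(iii) proceeds identically except that only a single wedge factor $u \in K_k$ is inserted at row $i$, yielding $K_k \otimes \Rop^{\bd}_{k+1} E_\bull^{\ba'}$ with $\ba' = \ba \setminus \{k\}$.

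I anticipate no substantial obstacle. The $P$-equivariance of wedge-insertion is formal from the functoriality of exterior powers on $P$-modules together with the fact that $K_k = \ker(E_k \tto E_{k-1})$ is $P$-stable, and the basis matching reduces to translating the $\leq_{k+1}$-condition on $\bb$ into the corresponding condition on $\bb'$, which is routine once one notes that the rows of $\ba'$ avoid the ``descent data'' at $k$ (so the distinction between $\leq_k$ and $\leq_{k+1}$ disappears on those rows).
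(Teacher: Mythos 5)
Your deduction of case~(i) is fine, but the key step for cases~(ii) and~(iii) --- ``for the remaining indices $j\notin\{i,i+1\}$, the conditions on $b_j$ coincide with those defining basis elements of $\Rop^\bd_{k+1}E_\bull^{\ba'}$'' --- has a genuine gap. For rows of $\ba$ lying \emph{above} row $k$ the matching is indeed automatic, since those entries are at most $d_{k-1}$ and therefore interleave correctly with the inserted entries; but for rows of $\ba$ lying \emph{below} row $k+1$ (case~(ii)), resp.\ below row $k$ (case~(iii)), column-strictness forces the entry in the first such row to exceed the inserted entry $b_{i+1}$ (resp.\ $b_i$), and this coupling has no counterpart in the tensor product $\exterior^2 K_k\otimes\Rop^\bd_{k+1}E_\bull^{\ba'}$ (resp.\ $K_k\otimes\Rop^\bd_{k+1}E_\bull^{\ba'}$). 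The two index sets are then genuinely different, so no basis count can produce the asserted isomorphism: already in the full-flag specialization $d_i=i$, take $\ba=\{1,3\}$ and $k=1$. The preceding lemma gives $N_1^{\ba}=\mathrm{span}\{z_{\ba,\{1,2\}}\}$ (in the minor model, the single element $z_{11}z_{32}$), whereas $K_1\otimes\Rop^\bd_{2}E_\bull^{\{3\}}$ is $2$-dimensional; your insertion map sends the extra basis vector $z_{1,1}\otimes z_{\{3\},\{1\}}$ to the merged filling with repeated column index, which is zero in the column module. So the map you construct is $P$-equivariant and surjective onto $N_k^{\ba}$, but not injective in such cases, and ``the basis count shows it restricts to an isomorphism'' is exactly the assertion that fails.

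Note that the matching \emph{is} correct precisely when the box in row $k$ (case~(iii)), resp.\ the boxes in rows $k,k+1$ (case~(ii)), are the lowest boxes of the column, since then the problematic interleaving condition is vacuous; this is the situation in the paper's illustrating example ($\bd=\{1,2,5,6,7\}$, $\ba=\{3,4\}$, $k=3$) and in the descent setting where such kernels are actually used. The paper offers no argument beyond ``these statements follow directly from the previous lemma,'' so your write-up is where the hidden hypothesis surfaces: as stated for arbitrary $\ba$, the claimed product decomposition cannot be obtained by your bijection (and the displayed isomorphism itself needs the extra assumption, or should be weakened to the statement that $N_k^{\ba}$ is the \emph{image} of the insertion map). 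At minimum you must restrict to columns with no boxes below row $k+1$ (resp.\ row $k$), or explain how the lower rows are to be handled, rather than calling the matching routine.
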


These statements follow directly from the previous lemma.  Note that cases (i) and (ii) comprise the possibilities for a $k$-full column; when case (ii) holds and $\dim K_k \geq 2$, $N_k^\ba$ can be nonzero.

\begin{example}
Take $\bd = \{ 1,2,5,6,7 \}$ as in \eqref{e.z4to3} above, and $\ba=\{ 3,4 \}$, so for $k=3$ we are in case (ii) of the lemma.  The  elements
\[
  z_{\{3,4\},\{3,4\}} = \left|\begin{array}{cc} z_{33} & z_{34}  \\ z_{43} & z_{43}  \end{array}\right|,\quad 
   z_{\{3,4\},\{3,5\}} = \left|\begin{array}{cc} z_{33} & z_{35}  \\ z_{43} & z_{45}  \end{array}\right|,\quad 
    z_{\{3,4\},\{4,5\}} = \left|\begin{array}{cc} z_{34} & z_{35}  \\ z_{44} & z_{45}  \end{array}\right| 
\]
are nonzero in $\Rop_{4}E_\bull^\ba$, and they all map to $0$ under the map to $\Rop_3 E_\bull^\ba$.
\end{example}

Similarly, when $k$ is a descent of a diagram $D$, the argument of Lemma~\ref{l.k-descent} shows that $N_k^D$ is isomorphic to the image of the homomorphism
\begin{equation}
  K_k \otimes \Rop_{k+1} E_\bull^{s_kD}  \to \kk[Z_\bull]
\end{equation}
given by multiplication, regading both $K_k$ as the span of $z_{k,j}$ for $d_{k-1}<j\leq d_k$, and $E_\bull^{s_kD}$ as the span of appropriate products of determinants $\Delta_\tau$, so both are given as subspaces of $\kk[Z_\bull]$.  When $\dim K_k \geq 2$, this multiplication map can have a kernel.  This happens already in very simple cases.

\begin{example}
Consider the diagram $D = \big[\{1\},\{1\}\big]$, and suppose $d_1=2$ and $d_2=3$, i.e., $\dim E_1 = 2$ and $\dim E_2 = 3$.  Then $E_\bull^D \isom \Sym^2 E_1$.  The diagram $D$ has a descent at $k=1$, and $\Rop_1 E_1 = 0$, so 
\[
K_1 = E_1 \quad \text{ and }\quad N_1^D = E_\bull^D = \Sym^2 E_1.
\]
After erasing an empty column, $s_1D = \big[ \{2\} \big]$, so $E_\bull^{s_1D} = E_2$, and $\Rop_2 E_2 = E_1$.  So the map $K_1 \otimes  \Rop_2 E_\bull^{s_1 D} \tto N_1^D$ is the projection
\[
  E_1 \otimes E_1 \to \Sym^2 E_1,
\]
and not an isomorphism.
\end{example}



%

\end{document}